\newcommand{\NN}{\mathbb{N}}
\newcommand{\ZZ}{\mathbb{Z}}
\newcommand{\PP}{\mathbb{P}}
\newcommand{\OO}{\mathcal{O}}
\newcommand{\B}{\mathcal{B}}
\newcommand{\cN}{\mathcal{N}}
\newcommand{\Supp}{\textnormal{Supp}}
\renewcommand{\AA}{\mathbb{A}}
\newcommand{\GrassFunctor}[2]{\underline{\mathbf{Gr}}_{#1}^{#2}}
\newcommand{\GrassScheme}[2]{\mathbf{Gr}_{#1}^{#2}}
\newcommand{\HilbScheme}[2]{\mathbf{Hilb}^{#2}_{#1}}
\newcommand{\HilbFunctor}[2]{\underline{\mathbf{Hilb}}^{#2}_{#1}}
\newcommand{\GFunctor}[1]{\underline{\mathbf{G}}_{#1}}
\newcommand{\HFunctor}[1]{\underline{\mathbf{H}}_{#1}}
\newcommand{\HScheme}[1]{\mathbf{H}_{#1}}
\newcommand{\MFFunctor}[1]{\underline{\mathbf{Mf}}_{#1}}
\newcommand{\MFScheme}[1]{\mathbf{Mf}_{#1}}
\newcommand{\GSFunctor}[2]{\underline{\mathbf{St}}_{#1}^{#2}}
\newcommand{\GSScheme}[2]{\mathbf{St}_{#1}^{#2}}
\newcommand{\IN}[1]{\textnormal{in}_{#1}}
\newcommand{\Lex}{\mathtt{Lex}}
\newcommand{\EK}{\textsc{ek}}
\newcommand{\Ht}{\mathrm{Ht}}
\newcommand{\T}{\mathrm{T}}
\newcommand{\In}{\textnormal{in}}
\newcommand{\rank}{\textnormal{rk}\,}
\newcommand{\Proj}{\textnormal{Proj}\,}
\newcommand{\Spec}{\textnormal{Spec}\,}
\newcommand{\Sets}{\underline{\textnormal{Sets}}}
\newcommand{\xx}{\mathbf{x}}
\newcommand{\CC}{\mathbf{C}}
\renewcommand{\i}{\mathrm{i}}
\renewcommand{\geq}{\geqslant}
\renewcommand{\leq}{\leqslant}
\numberwithin{equation}{section}
\newtheorem{theorem}{Theorem}[section]
\newtheorem{corollary}[theorem]{Corollary}
\newtheorem{proposition}[theorem]{Proposition}
\newtheorem{lemma}[theorem]{Lemma}
\newtheorem{definition}[theorem]{Definition}
\theoremstyle{definition}
\newtheorem{example}[theorem]{Example}
\newtheorem{remark}[theorem]{Remark}
\newtheorem{notation}[theorem]{Notation}
\def\ecr{\color{black}}
\begin{document}
\title{On the functoriality of marked families}


\author[P.~Lella]{Paolo Lella}
\address{Paolo Lella\\ Dipartimento di Matematica\\ 
         Via Sommarive 14\\ 38123 Povo Trento\\ Italy.}
\email{\href{mailto:paolo.lella@unitn.it}{paolo.lella@unitn.it}}
\urladdr{\url{http://www.paololella.it/}}

\author[M.~Roggero]{Margherita Roggero}
\address{Margherita Roggero\\ Dipartimento di Matematica\\ 
         Via Carlo Alberto 10\\     10123 Torino\\ Italy.}
\email{\href{mailto:margherita.roggero@unito.it}{margherita.roggero@unito.it}}

\thanks{The first author was partially supported by GNSAGA of INdAM, by PRIN 2010--2011 \lq\lq Geometria delle variet\`a algebriche\rq\rq, and by FIRB 2012 \lq\lq Moduli spaces and Applications\rq\rq. The second author was supported by PRIN 2010--2011 \lq\lq Geometria delle variet\`a algebriche\rq\rq.}

\subjclass[2010]{14C05, 13P99} 

\begin{abstract}
The application of methods of computational algebra has recently introduced new tools for the study of Hilbert schemes. The key idea is to define flat families 
of ideals endowed with a scheme structure whose defining equations can be determined by algorithmic procedures. 
For this reason, several
authors developed new methods, based on the combinatorial properties of Borel-fixed ideals, that allow to associate to each  ideal $J$ of this type a scheme $\MFScheme{J}$, called $J$-marked scheme.
In this paper we provide a solid functorial
foundation to marked schemes and show  that the algorithmic procedures introduced in previous papers do not depend on the ring of coefficients.\ecr\ 
 We prove that  for all strongly stable ideals $J$,  the marked schemes 
 $\MFScheme{J}$  can be   embedded in a Hilbert scheme  as  locally closed subschemes, and  that they are open under suitable conditions on  $J$. 
Finally, we generalize Lederer's result about Gr\"obner strata of zero-dimensional ideals, proving that Gr\"obner strata of any ideals are locally closed subschemes of Hilbert schemes.
\end{abstract} 

\keywords{Hilbert scheme; marked family;  Borel-fixed ideal; open subfunctor}

\maketitle



\section*{Introduction}

This article aims  to give a solid functorial foundation to the theory of  marked schemes  over a strongly stable ideal $J$ introduced  in \cite{CioffiRoggero,BCLR}.
We   describe them in terms of representable functors and     prove that these functors are represented by the schemes constructed in the aforementioned papers.  
Moreover, under mild additional  hypotheses on $J$, these functors turn out to be  subfunctors of a Hilbert functor.  Equations defining the marked schemes
can be effectively computed, therefore these methods allow for effective computations on the  Hilbert schemes. 
  In particular, if we only consider algebras and schemes 
 over a field 
 of characteristic zero, marked schemes $\MFScheme{J}$ with $J$ strongly stable  provide, up to the action of the linear group, an
 open cover of the Hilbert scheme.

For a given monomial ideal $J $ in a polynomial ring $A[x_0, \dots, x_n]$, we consider the collection of all the ideals $I$ such that 
$A[x_0, \dots, x_n] =I\oplus \langle\cN(J)\rangle$, where $\cN(J)$ denotes  the set of monomials not contained in $J$. In the case where $A$ is a field and $J$ strongly stable, this collection
 appears  for the first time in \cite{CioffiRoggero},  where it is called \emph{$J$-marked family}, and it is proved that it can be endowed with a structure of scheme (called $J$-marked scheme) \cite{CioffiRoggero,BCLR}.

    All the ideals $I$ of this collection share the same basis $\cN(J)$
of the quotient algebra $A[x_0, \dots, x_n]/I$, therefore they define subschemes in $\Proj A[x_0, \dots, x_n]$ with the same Hilbert polynomial.  These same  properties hold 
for  Gr\"obner strata, which are schemes parametrizing homogeneous ideals having a fixed monomial ideal as their initial ideal w.r.t.~a given term ordering.
However, we emphasize that marked schemes and Gr\"obner strata are not the same objects. Indeed, in general a $J$-marked scheme strictly contains the Gr\"obner
stratum with initial ideal $J$ w.r.t.~a fixed term ordering (or even the union of all Gr\"obner strata with initial ideal $J$).
  


The use of Gr\"obner strata in the study of Hilbert schemes is very natural and has
  been discussed since \cite{Bayer82,CarraFerro}. Indeed, the ideals of a Gr\"obner stratum define points on the same Hilbert scheme and 
Gr\"obner strata cover set-theoretically the Hilbert scheme. Thus, several authors addressed the question whether a Gr\"obner stratum can be equipped by a scheme structure and, 
if so, how this scheme is embedded in the Hilbert scheme.

 Notari and Spreafico \cite{NotariSpreafico} prove that every Gr\"obner stratum (considering the reverse lexicographic order) is a locally closed subscheme of the support
 of the Hilbert scheme. Lederer \cite{Lederer} obtains a stronger result in the case of Hilbert schemes of points; working in the affine framework, he proves that Gr\"obner
 strata are locally closed subschemes of the Hilbert scheme. In \cite{LR}, the  authors of the present paper 
 find suitable conditions on the monomial ideal $J$ and on the term ordering that are sufficient to ensure that the Gr\"obner stratum is an open subscheme of
 the Hilbert scheme. 
 
Nevertheless, Gr\"obner strata are in general not sufficient to obtain an open cover of the Hilbert scheme (see \cite{LR,CioffiRoggero}), while we can obtain such an open cover using marked schemes and exploiting the action of the general linear group on the Hilbert scheme. Furthermore,  equations defining a $J$-marked scheme can be computed by some algorithmic procedures developed in \cite{CioffiRoggero,BCLR,BLR}. The key point is a procedure of polynomial reduction, similar to the one for Gr\"obner bases, but that does not need a term ordering (see Definition \ref{riduzione}). 

In this paper, we prove that the procedure of reduction is also \lq\lq natural\rq\rq. Indeed, the reduction works independently of the ring $A$ of coefficients of
the polynomial ring, so that the schemes introduced in \cite{CioffiRoggero,BCLR,BLR} correctly describe the scheme structure of the Hilbert scheme (Theorem \ref{th:markedFunctorsInclusion} and Corollary \ref{cor:mfEmbHilb}).

%
 
In the classical construction of the Hilbert scheme, every point is associated to the homogeneous piece of (a sufficiently large) degree $r$ of the ideal defining the corresponding scheme. At first sight, one could be tempted to consider marked scheme over ideals truncated in the same (large) degree. However, explicit computations of these marked schemes turn out to be in general out of reach,  due to the huge number of variables required. Since the number of variables depends on the degree of the truncation, we develop the theory of marked functors in a wider generality,  considering marked functors over ideals truncated in any degree. In this way we can find marked schemes that correctly describe the local structure of the Hilbert scheme, but that are far easier to compute (Theorem \ref{th:markedFunctorsInclusion}).

Finally, we discuss the relation between marked schemes and Gr\"obner strata, also introducing a representable functor whose representing scheme is in fact a Gr\"obner stratum. For constant Hilbert polynomials, the Gr\"obner strata we define in the projective case coincide with those introduced by Lederer in the affine case. In this paper, we generalize Lederer result to Hilbert polynomials of any degree, proving that Gr\"obner strata are closed subschemes of marked schemes, and so locally closed subschemes of the Hilbert scheme (Theorem \ref{th:gsClosedSubfunctor}).

In the last section, we discuss in details the case of a strongly stable ideal defining a zero-dimensional subscheme of $\PP^3$ of degree $7$. We compute explicitly the equations of marked schemes and Gr\"obner strata of different truncations, showing the relations among them and exhibiting several phenomena described in the paper.

\section{Marked bases}
In this section, we recall the main definitions concerning sets of polynomials marked over a monomial ideal $J$ and we describe some properties of an ideal generated
by such a set, assuming that $J$ is strongly stable. First, let us fix some notation. Throughout the paper, we 
will consider noetherian rings. We will denote by $\ZZ[\xx]$ the polynomial ring $\ZZ[x_0,\ldots,x_n]$ and by $\PP^n_\ZZ$ the projective space $\Proj \ZZ[\xx]$.
For any ring $A$, $A[\xx]$ will denote the polynomial ring $A \otimes_\ZZ \ZZ[\xx]$ in $n+1$ variables with coefficients in $A$ and $\PP_A^n$ will be the scheme
$\Proj A[\xx] = \PP^n_\ZZ \times_{\Spec \ZZ} \Spec A$. For every integer $s$, we denote by $A[\xx]_s$ the graded component  of degree $s$, and we set
$D_s:=D\cap  A[\xx]_s$ for every  $D\subseteq A[\xx]$ .

We denote monomials in multi-index notation. For any element $\alpha = (\alpha_0,\ldots,\alpha_n) \in \NN^{n+1}$, $x^\alpha$ will be the monomial
$x_0^{\alpha_0}\cdots x_n^{\alpha_n}$ and $\vert\alpha\vert$ will be its degree. Given a set of homogeneous polynomials $H$ in $A[\xx]$, for emphasizing the dependence 
on the coefficient ring A, we write ${}_A\langle H\rangle$ for the $A$-module generated by $H$ and ${}_A(H)$ for the ideal in $A[\xx]$ generated by $H$. We will omit this
subscript when no ambiguity can arise, for instance when only one ring $A$ is involved.

If  $J$ is a monomial  ideal in $A[\xx]$, then $\B_J$ is its minimal set of generators and  $\cN(J)$ is the set of monomials not contained in $J$.

\begin{remark}
A monomial ideal is determined by the set of monomials it contains. In the following, by abuse of notation,  we will use the same letter to denote all  monomial ideals having 
the same set of monomials, even in polynomial rings with different rings of coefficients. More formally, if $J$ is a monomial ideal in $\ZZ[\xx]$, we will denote by the same 
symbol $J$ also all the ideals $J\otimes_\ZZ A$. 
\end{remark}

Throughout the paper, we assume the variables ordered as $x_0 < \cdots < x_n$. For any monomial $x^\alpha$, we denote by $\min x^\alpha$ the smallest variable (or equivalently 
its index) dividing $x^\alpha$ and by $\max x^\alpha$ the greatest variable (or its index) dividing the monomial. 
\begin{definition}
An ideal $J \subseteq A[\xx]$ is said \emph{strongly stable} if
\begin{enumerate}[(i)]
\item $J$ is a monomial ideal;
\item if $x^\alpha \in J$, then $\frac{x_i}{x_j} x^\alpha \in J$, for all $x_j \mid x^\alpha$ and $x_i > x_j$.
\end{enumerate}
\end{definition}
These ideals are extensively studied in commutative algebra and widely used in algebraic geometry since they are  related to the \emph{Borel-fixed ideals} \cite{Green}.
Indeed, every strongly stable ideal is Borel-fixed, whereas in general a Borel-fixed ideal does not need to be 
strongly stable. The two notions coincide in polynomial rings with coefficients in a field of characteristic zero.
  Borel-fixed ideals  are involved in some of the most important general results on Hilbert schemes, as for instance the proof of its connectedness given by Hartshorne \cite{HartshorneThesis}. 

Combinatorial properties of strongly stable ideals have been successfully used for designing algorithms inspired by the theory of Gr\"obner bases but not requiring a term
ordering. The role of the term ordering, a total ordering on the set of monomials, is played by a partial order called the \emph{Borel ordering}, given as
the transitive closure of the relation
\[
x^\alpha >_B x^\beta \quad \Longleftrightarrow \quad x_i x^\alpha = x_j x^\beta\text{ and } x_i < x_j.
\]
Moving from this order, it is possible to define reduction procedures which turn out to be noetherian. A detailed description of these techniques are contained in the
papers \cite{CioffiRoggero,BCLR,BLR}. We will now recall some of the main properties needed in the next section.

\begin{definition}
For a polynomial $f\in A[\xx]$, its \emph{support}, denoted by $\Supp(f)$, is the set of monomials appearing in $f$ with non-zero coefficient. We refer to the set of non-zero coefficients of $f$ as \emph{$\xx$-coefficients} of $f$. A \emph{monic marked polynomial} is a polynomial $f \in A[\xx]$ with a specified monomial $\Ht(f)$ of its support,  with coefficient $1_A$.  We  call $\Ht(f)$  the \emph{head term} of $f$ and    we call  $\T(f) := \Ht(f) - f$ the \emph{tail} of $f$  (so that $f = \Ht(f) - \T(f)$). Throughout the paper we describe marked polynomials adding as subscript the multi-index corresponding to the head term, i.e.~we write $f_\alpha$ meaning that $\Ht(f_\alpha) = x^\alpha$. 
\end{definition}
 
\begin{definition}
Let $J \subseteq A[\xx]$ be a strongly stable ideal and let $\B_J$ be the minimal set of generators of $J$. We call \emph{$J$-marked set} a set of monic marked polynomials
\begin{equation*}
  \left\{f_{\alpha} := x^\alpha - \sum_{\mathclap{\cramped{x^\beta \in \cN(J)_{\vert\alpha\vert}}}} c_{\alpha\beta}\, 
  x^\beta \in A[\xx]\ \Bigg\vert\ x^{\alpha} \in \B_J\right\},
  \end{equation*}
where $\Ht(f_{\alpha}) = x^\alpha$ and $c_{\alpha\beta}\in A$. A $J$-marked set $F_J$ is called a \emph{$J$-marked basis} if $A[\xx] = {}_A(F_J) \oplus {}_A\langle \cN(J)\rangle$, i.e.~the monomials of $\cN(J)$ freely generate $A[\xx]/{}_A(F_J)$. 
\end{definition}

  We emphasize that the assumption of the head term to be monic is significant only if the ring of coefficients
 $A$  is not a field. Indeed, if $A$ is a field
 (as done in \cite{CioffiRoggero,BCLR}), a set of marked polynomials can always be modified in a set of monic marked polynomials. 

\smallskip

If $(F_J)$ is a $J$-marked basis, then the scheme $\Proj A[\xx]/(F_J)$ is $A$-flat because the $A$-module $A[\xx]/{}_A(F_J)$ is free. The ideal ${}_A(F_J)$
generated by a $J$-marked basis $F_J$ has the same Hilbert polynomial as the monomial ideal $J$, so that $J$ and ${}_A(F_J)$ define schemes corresponding to closed 
points of the same Hilbert scheme.
Therefore, it is interesting to find theoretical conditions and  effective procedures in order to state  whether a marked set is a marked basis. 

\begin{proposition}[{\cite[Lemma 1.1]{EliahouKervaire}, \cite[Lemma 1.2]{BCLR}}]\label{prop:borelProducts}
Let $J$ be a strongly stable ideal.
\begin{enumerate}
[(i)]
\item\label{it:borelProducts_i} Each monomial $x^\alpha$ can be written uniquely as a product $x^{\gamma}x^\delta$ with $x^\gamma \in \B_J$ and $\min x^\gamma
\geq \max x^\delta$. Therefore, $x^\delta <_\mathtt{Lex} x^\eta$ for every monomial $x^\eta$ such that $x^\eta \mid x^\alpha $ and $x^{\alpha -\eta} \notin J$.
 We will write $x^\alpha = x^\gamma \ast_J x^\delta$ to refer to this unique decomposition.
\item\label{it:borelProducts_ii} Consider $x^\alpha \in J \setminus \B_J$ and let $x_j = \min x^\alpha$. Then, $x^\alpha/x_j$ is contained in $J$.
\item\label{it:borelProducts_iii} Let $x^\beta$ be a monomial not contained in $J$. If $x^\delta x^\beta \in J$, then either $x^\delta x^\beta \in \B_J$ or
$x^\delta x^\beta = x^{\alpha}\ast_J x^{\delta'}$ with $x^\alpha \in \mathcal{B}_J$ and $x^\delta >_{\mathtt{Lex}} x^{\delta'}$. In particular, if $x_i x^\beta \in J$, 
then either $x_ix^\beta \in \B_J$ or $x_i > \min x^\beta$.
\end{enumerate}
\end{proposition}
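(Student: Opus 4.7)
The plan is to prove the three items in the order (ii), (i), (iii), since each subsequent item uses the previous. Item (ii) follows directly from the strong stability axiom. Since $x^\alpha \in J\setminus\B_J$, some minimal generator properly divides $x^\alpha$, so there exists a variable $x_l$ dividing $x^\alpha$ with $x^\alpha/x_l\in J$. If $l=j$ we are done; otherwise $l>j$, and because $x_j$ still divides $x^\alpha/x_l$, the strong stability move with source $x_j$ and target $x_l$ yields $(x_l/x_j)(x^\alpha/x_l) = x^\alpha/x_j \in J$.

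For (i), I will use the canonical Eliahou--Kervaire construction. Writing $x^\alpha = x_{i_1}\cdots x_{i_d}$ with $i_1\geq\cdots\geq i_d$, let $k$ be the smallest index for which $x_{i_1}\cdots x_{i_k}\in J$ (it exists since $x^\alpha\in J$), and set $x^\gamma:=x_{i_1}\cdots x_{i_k}$, $x^\delta:=x_{i_{k+1}}\cdots x_{i_d}$. Then $\min x^\gamma = x_{i_k} \geq x_{i_{k+1}} = \max x^\delta$, and $x^\gamma\in\B_J$: otherwise (ii) would give $x^\gamma/x_{i_k}=x_{i_1}\cdots x_{i_{k-1}}\in J$, contradicting minimality of $k$. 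Uniqueness follows because any admissible decomposition $x^\alpha = x^{\gamma'}x^{\delta'}$ must place every variable of index $>\min x^{\gamma'}$ inside $x^{\gamma'}$, and the length of the resulting generator is forced by the minimality just invoked. For the $\Lex$ consequence, the hypothesis $x^{\alpha-\eta}\notin J$ forces $x^\gamma\nmid x^{\alpha-\eta}$, so some variable $x_{l^*}$ has $x^\eta$-exponent exceeding $x^\delta$-exponent; choosing $l^*$ as the largest such index, the constraint $x^\eta\mid x^\alpha$ combined with $x^\gamma$ having no variables of index below $\min x^\gamma$ rules out the possibility $l^*<\min x^\gamma$, so the exponents of $x^\eta$ and $x^\delta$ agree at every variable of index $>l^*$, yielding $x^\delta <_\Lex x^\eta$.

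For (iii), apply (i) to $x^\delta x^\beta\in J$: if it is not in $\B_J$, it decomposes as $x^\alpha\ast_J x^{\delta'}$ with $x^\alpha\in\B_J$. The inequality $x^{\delta'}<_\Lex x^\delta$ is then an instance of the $\Lex$ statement in (i) applied with $x^\eta:=x^\delta$, since $x^\delta$ divides $x^\delta x^\beta$ and the complementary divisor is $x^\beta$, which lies outside $J$ by hypothesis. The ``in particular'' clause I prove directly from (ii): if $x_ix^\beta\in J\setminus\B_J$ and $i\leq\min x^\beta$, then $x_i=\min(x_ix^\beta)$, so (ii) gives $x^\beta = (x_ix^\beta)/x_i\in J$, contradicting $x^\beta\notin J$.

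The main obstacle I expect is the $\Lex$ bookkeeping in item (i): one must verify that the largest coordinate where $x^\eta$ and $x^\delta$ disagree is necessarily one where $x^\eta$ has the larger exponent. This rests on carefully combining the divisibility $x^\eta\mid x^\alpha$ with the location of $\min x^\gamma$ in order to rule out a disagreement sitting strictly below the support of $x^\gamma$, which is precisely what prevents a spurious $x^\eta <_\Lex x^\delta$.
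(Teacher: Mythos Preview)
The paper does not prove this proposition; it is stated with citations to \cite[Lemma~1.1]{EliahouKervaire} and \cite[Lemma~1.2]{BCLR}. Your argument is correct and follows the standard Eliahou--Kervaire route. The uniqueness step in (i) is a bit compressed---to make it watertight one observes that any admissible $x^{\gamma'}$ is necessarily a prefix $x_{i_1}\cdots x_{i_{k'}}$ of the sorted variable list, whence $k'\geq k$ by minimality of $k$ and $k'\leq k$ since otherwise $x^\gamma\in J$ would properly divide the minimal generator $x^{\gamma'}$---but the idea you indicate is exactly this. The $\Lex$ inequality you flag as the delicate point is also handled correctly: the witness index $l_0$ for $x^\gamma\nmid x^{\alpha-\eta}$ has $\gamma_{l_0}>0$, hence $l_0\geq\min x^\gamma$, which forces the maximal index $l^*$ with $\eta_{l^*}>\delta_{l^*}$ to satisfy $l^*\geq\min x^\gamma$; then $\delta_l=0$ for all $l>l^*$, so $\eta_l=\delta_l=0$ there and the first disagreement from the top is at $l^*$, where $\eta$ wins.
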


\begin{definition}
 Let $J$ be a strongly stable ideal and let $I$ be the ideal generated by a $J$-marked set $F_J$ in $A[\xx]$. We consider the following sets of polynomials:
\begin{itemize} 
\item$ F_J^{(s)} := \left\{ x^\delta f_\alpha\ \big\vert\ \deg \big(x^\delta f_\alpha\big) = s,\ f_\alpha \in F_J,\ \min x^\alpha \geq \max x^\delta \right\}$;
\item $\widehat{F}_J^{(s)}:=\left\{ x^\delta f_\alpha\ \big\vert\ \deg \big(x^\delta f_\alpha\big) =  s,\ f_\alpha \in F_J,\ \min x^\alpha < \max x^\delta \right\}$;
\item $\textit{SF}_J^{(s)}:= \left\{x^\delta f_\beta - x^\gamma f_\alpha\ \big\vert\  x^\delta f_\beta \in \widehat{F}_J^{(s)},\ x^\gamma f_\alpha \in F_J^{(s)}, x^\delta x^\beta 
= x^\gamma x^\alpha\right\}$;
\item $\cN (J, I):= I \cap {}_A\langle \cN(J)\rangle$.  
\end{itemize} 
Throughout the paper, we use the convention that when multiplying a marked polynomial $f$ by a monomial $x^\delta$, we have $\Ht(x^\delta f) = x^\delta \Ht(f)$. Therefore, for each monomial $x^\gamma \in J_s$, there is a unique polynomial in $F_J^{(s)}$ with head term $x^\gamma$.
\end{definition}

\begin{theorem}\label{th:markedSetChar} 
Let $J$ be a strongly stable ideal and $I \subseteq A[\xx]$ be the ideal generated by a $J$-marked set $F_J$. For every $s$,
\begin{enumerate}[(i)]
\item\label{it:markedSetChar_i} $I_s = \big\langle F_J^{(s)} \big\rangle + \big\langle \widehat{F}_J^{(s)} \big\rangle = \big\langle F_J^{(s)} \big\rangle +
\big\langle \textit{SF}_J^{(s)}\big\rangle$;
\item\label{it:markedSetChar_ii} $A[\xx]_s = \big\langle F_J^{(s)}\big\rangle \oplus \big\langle \cN(J)_s \big\rangle$;
\item\label{it:markedSetChar_iii} the $A$-module $\big\langle F_J^{(s)} \big\rangle$ is free of rank equal to $\rank J_s$ and is generated by a unique $(J_s)$-marked set
$\widetilde{F}_J^{(s)}$;
\item\label{it:markedSetChar_iv} $I_s= \big\langle {F}_J^{(s)} \big\rangle \oplus  \cN (J, I)_s = \big\langle \widetilde{F}_J^{(s)} \big\rangle \oplus \cN (J, I)_s $.
\end{enumerate}
Moreover, TFAE:
\begin{enumerate}[(i)]\setcounter{enumi}{4}
\item\label{it:markedSetChar_v} $F_J$ is a $J$-marked basis;
  \item\label{it:markedSetChar_vi} for all $s$, $I_s=\big\langle {F}_J^{(s)}\big\rangle $;
\item\label{it:markedSetChar_vii} for all $s$, $\big\langle\textit{SF}_J^{(s)}\big\rangle \subseteq \big\langle {F}_J^{(s)}\big\rangle$;
\item\label{it:markedSetChar_viii} $\cN (J, I) = 0$.
\end{enumerate}
\end{theorem}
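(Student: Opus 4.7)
The plan is to establish (i)--(iv) in order and then derive (v)--(viii) as direct corollaries. The Eliahou--Kervaire decomposition from Proposition \ref{prop:borelProducts}(i) and the lex comparison from Proposition \ref{prop:borelProducts}(iii) are the main combinatorial tools; the hypothesis of strong stability enters through them.

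For (i), the $A$-module $I_s$ is $A$-generated by products $x^\delta f_\alpha$ of total degree $s$, and these split into $F_J^{(s)}$ (when $\min x^\alpha \geq \max x^\delta$) and $\widehat{F}_J^{(s)}$ (otherwise), giving the first equality. For the second, each $x^\delta f_\beta \in \widehat{F}_J^{(s)}$ has head $x^\delta x^\beta \in J_s$; by Proposition \ref{prop:borelProducts}(i) this head decomposes uniquely as $x^{\alpha} \ast_J x^{\gamma}$ with $x^\gamma f_\alpha \in F_J^{(s)}$, so the difference $x^\delta f_\beta - x^\gamma f_\alpha$ lies in $\textit{SF}_J^{(s)}$ by definition.

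The core step is (ii). Because the EK decomposition is a bijection, $|F_J^{(s)}| = |J_s|$, so together with $\cN(J)_s$ the count matches the rank of the free $A$-module $A[\xx]_s$. I would order the monomials of $J_s$ by the lex order of their EK co-factor $\delta$ and form the square matrix $M$ expressing each $x^\delta f_\alpha \in F_J^{(s)}$ in the monomial basis $J_s \sqcup \cN(J)_s$. The $J_s$-block has $1$'s on the diagonal (the head of $x^\delta f_\alpha$ is $x^\alpha \ast_J x^\delta$), and Proposition \ref{prop:borelProducts}(iii) applied to each $x^\delta x^\beta \in J$ appearing in $x^\delta \T(f_\alpha)$ produces an EK co-factor strictly smaller than $x^\delta$ in $\Lex$. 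Hence this block is lower-triangular and unipotent, $M$ is invertible, and the direct-sum decomposition of (ii) follows, together with the freeness and linear independence claimed in (iii); the unique $(J_s)$-marked set $\widetilde{F}_J^{(s)}$ is then obtained by projecting each $x^\mu \in J_s$ along $\langle \cN(J)_s\rangle$ onto $\langle F_J^{(s)}\rangle$.

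Part (iv) is formal: any $g \in I_s$ splits via (ii) as $g = g_1 + g_2$ with $g_1 \in \langle F_J^{(s)}\rangle \subseteq I_s$, forcing $g_2 = g - g_1 \in I_s \cap \langle \cN(J)_s\rangle = \cN(J,I)_s$, and directness is inherited from (ii). For the equivalences, (v)$\Leftrightarrow$(viii) follows by summing (ii) over all $s$ and unpacking the definition of $J$-marked basis; (vi)$\Leftrightarrow$(viii) is immediate from (iv); (vi)$\Leftrightarrow$(vii) is immediate from (i). The main obstacle lies in (ii): finding a well-founded order on $J_s$ that makes $M$ genuinely triangular over an arbitrary noetherian ring $A$, not merely over a field. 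Proposition \ref{prop:borelProducts}(iii) supplies exactly this, and it is the reason why strong stability, rather than an ad-hoc term ordering, is precisely the right hypothesis.
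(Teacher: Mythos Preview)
Your proposal is correct and takes essentially the same approach as the paper: both rely on ordering $J_s$ by the $\Lex$ order of the EK co-factor $x^\delta$ and invoking Proposition~\ref{prop:borelProducts}\emph{(\ref{it:borelProducts_iii})} to control the $J$-monomials appearing in each tail $x^\delta\,\T(f_\alpha)$. The only cosmetic difference is that the paper handles (ii) in two separate steps (first $\langle F_J^{(s)}\rangle \cap \langle \cN(J)_s\rangle = 0$ via a maximal-$x^\delta$ argument, then spanning via a minimal-$x^\delta$ argument), whereas you fold both into the invertibility of a single unipotent block-triangular change-of-basis matrix.
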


\begin{proof}
\emph{(\ref{it:markedSetChar_i})} Straightforward from the definition of the homogeneous piece of a given degree of an ideal.

\smallskip 

\emph{(\ref{it:markedSetChar_ii})} We start proving that there are no non-zero polynomials in the intersection $\langle F_J^{(s)}\rangle \cap \langle \cN(J)_s \rangle $.
Let us consider $h:=\sum_{i}  b_{i} x^{\delta_i} f_{\alpha_i}$, where $x^{\delta_i } f_{\alpha_i }$ are distinct elements of $F_J^{(s)}$ and $b_i \in A\setminus\{0\}$. 
Assume that the
polynomials  $x^{\delta_i} f_{\alpha_i}$ are indexed so that $x^{\delta_1} \geq _\Lex x^{\delta_2} \geq _\Lex \cdots$. Then   $b_{1}$ turns out to be also the 
coefficient of the monomial $x^{\delta_1} x^{\alpha_1}$ in $h$.  Indeed, $x^{\alpha_1}x^{\delta_1}$ does not appear either as head term or in the support of 
the tail of a summand $x^{\delta_i} f_{\alpha_i}$ of $h$ with $i > 1$. The monomial cannot be the head term of $x^{\delta_i} f_{\alpha_i}$, since the head terms in $F_J^{(s)}$
(and so in the summands of $h$) are all different. Moreover,  $x^{\alpha_1}x^{\delta_1}$ cannot appear in $T(x^{\delta_i} f_{\alpha_i})$ with $i>1$, since it has the unique 
decomposition $x^{\alpha_1}\ast_J x^{\delta_1}$, while every 
monomial $ x^{\delta_i} x^\beta\in T(x^{\delta_i} f_{\alpha_i})\cap J $ has decomposition $x^\alpha \ast_J x^\eta$ with $x^\eta <_\Lex x^{\delta_i}<_\Lex x^{\delta_1}$  by 
Proposition \ref{prop:borelProducts}\emph{(\ref{it:borelProducts_iii})} (note that by definition $x^\beta \in \Supp(T(f_{\alpha_1 }))\subseteq \cN(J)$). 
Therefore, no non-zero polynomials in $\langle F_J^{(s)} \rangle$ are contained in $\left\langle\cN(J)_s \right\rangle$.

To conclude the proof, we show that every monomial $x^\beta$ of degree $s$ is contained in the direct sum $\langle F^{(s)}_J \rangle \oplus \langle\cN(J)_s\rangle$. 
If $x^\beta \in \cN(J)_s$, there is nothing to prove. Now assume that there exists some monomial in $J_s$
not contained in $\langle F_{J}^{(s)}\rangle\oplus \langle\cN(J)_s\rangle$. Among them, choose $x^\beta$ such that in the unique decomposition 
$x^\beta = x^\alpha \ast_J x^\delta$,   monomial $x^\delta$ is minimum w.r.t.~the $\Lex$ ordering. Since $x^\beta = x^\delta f_{\alpha} + \T(x^\delta f_{\alpha})$,
the support of $\T(x^\delta f_{\alpha})$ cannot be contained in $\cN(J)_s$, i.e.~there exists  $x^\eta\in \Supp(T(f_{\alpha}))$ such that   $x^\eta x^\delta\in J$. By Proposition \ref{prop:borelProducts}\emph{(\ref{it:borelProducts_iii})}, we have the decomposition $x^\eta x^\delta = x^{\alpha'}\ast_J x^{\delta'}$ with 
$x^{\delta'} <_{\Lex} x^{\delta}$ against the assumption of minimality on $x^\delta$.

\smallskip

\emph{(\ref{it:markedSetChar_iii})}  By \emph{(\ref{it:markedSetChar_ii})}, we have the short exact sequence
\[
0 \rightarrow \langle F^{(s)}_J \rangle \hookrightarrow A[\xx]_s \xrightarrow{\pi} \langle \cN(J)_s \rangle\rightarrow 0.
\]
For each $x^\alpha$ in $J_s$, we compute the image $\pi(x^\alpha) = \sum_{x^\beta \in \cN(J)_s} a_{\alpha\beta} x^\beta$ and consider the set $\widetilde{F}_J^{(s)} := 
\{\tilde{f}_\alpha:=x^\alpha -  \sum_{x^\beta \in \cN(J)_s} a_{\alpha\beta} x^\beta\ \vert\ x^\alpha \in J_s\} \subseteq \ker \pi=  \langle F^{(s)}_J \rangle$. Let $J':= (J_s)$. By construction the set $\widetilde{F}_J^{(s)}$ is a
$J'$-marked set with  $\Ht(\tilde{f}_\alpha)=x^\alpha$. Applying \emph{(\ref{it:markedSetChar_ii})} to this $J'$-marked set, we have $\langle\widetilde{F}_J^{(s)}\rangle \oplus \langle\cN(J')_s\rangle 
= A[\xx]_s$. Finally, since the $A$-module generated by $\widetilde{F}_J^{(s)}$ is contained in $\langle F_J^{(s)}\rangle$ and $\cN(J)_s = \cN(J')_s$, the modules
$\langle\widetilde{F}_J^{(s)}\rangle$ and $\langle{F}_J^{(s)}\rangle$ coincide. Note that $\widetilde{F}_J^{(s)}$ is marked on the monomial ideal $J'$ generated by   $J_s$, but does not need to be a $J_{\geq s}$-marked set, since 
$J_{\geq s}$ may have minimal generators of degree $>s$. 

\smallskip

\emph{(\ref{it:markedSetChar_iv})} By \emph{(\ref{it:markedSetChar_i})} and \emph{(\ref{it:markedSetChar_iii})}, we have $I_s = \langle\widetilde{F}_J^{(s)}\rangle
+ \langle \textit{SF}_J^{(s)}\rangle$. Since $\langle \widetilde{F}_J^{(s)}\rangle \cap \langle\cN(J)_s\rangle = \{0\}$, the module   $\cN (J, F_J)_s$   can be determined starting 
from the generators of $\langle \textit{SF}_J^{(s)}\rangle$ and by replacing each monomial $x^\beta \in J_s$ appearing in 
some polynomial of $ \textit{SF}_J^{(s)}$ with the tail $\T(\widetilde{f}_\beta)$ of the polynomial $\widetilde{f}_\beta \in \widetilde{F}_J^{(s)}$ with
$\Ht(\widetilde{f}_\beta) = x^\beta$. The result of this procedure is a set of polynomials  contained both in $I_s$ and $\langle\cN(J)_s\rangle$. The sum of
$\cN (J, I)_s$ and $\langle F^{(s)}_J\rangle$ is direct by \emph{(\ref{it:markedSetChar_ii})} and \emph{(\ref{it:markedSetChar_iii})}.

\smallskip

The equivalences \emph{(\ref{it:markedSetChar_v})}$\Leftrightarrow$\emph{(\ref{it:markedSetChar_vi})}$\Leftrightarrow$\emph{(\ref{it:markedSetChar_vii})}$
\Leftrightarrow$\emph{(\ref{it:markedSetChar_viii})} follow directly from the first part of the theorem. In fact, these properties are a rephrasing of the 
definition of $J$-marked basis.
\end{proof}

We emphasize that the above result does not hold in general for a monomial ideal $J$ which is not strongly stable, as shown by the following example.

\begin{example}
 Consider $J=(x_2^2,x_1^2)\subseteq\ZZ[x_0,x_1,x_2] $ and let $I$ be the ideal generated by the $J$-marked set
 $F_J=\{f_{\text{\tiny 002}}=x_2^2+x_2x_1, f_{\text{\tiny 020}}=x_1^2+x_2x_1\}$. 
 An easy computation shows that $I_3$ is freely  generated by $F_J^{(3)}$, but
 $\vert F_J^{(3)}\vert = \rank I_3 =5 < 6 = \rank J_3$  and $I_3$ does not contain any $(J_{3})$-marked set 
 $\widetilde{F}_J^{(3)}$.
\end{example}

\begin{example}\label{ex:primoA}
Consider the strongly stable ideal $J = (x_2^2,x_2x_1,x_1^3) \subseteq \ZZ[x_0,x_1,x_2]$ and any $J$-marked set $F_J = \{f_{\text{\tiny 002}},f_{\text{\tiny 011}},f_{\text{\tiny 030}}\}$ over a ring $A$. Let us compute the sets of polynomials $F_J^{(s)}$, $\widehat{F}_J^{(s)}$ and $\textit{SF}_J^{(s)}$ discussed in Theorem \ref{th:markedSetChar} for $s=2,3,4$.
\[
\begin{split}
(s=2)\qquad &F_J^{(2)} = \{ f_{\text{\tiny 002}},\; f_{\text{\tiny 011}}\},\quad  \widehat{F}_J^{(2)} = \emptyset,\quad \textit{SF}_J^{(2)} = \emptyset,\\
(s=3)\qquad &F_J^{(3)} = \{ x_2 f_{\text{\tiny 002}},\; x_1 f_{\text{\tiny 002}},\; x_0 f_{\text{\tiny 002}},\; x_1 f_{\text{\tiny 011}},\; x_0 f_{\text{\tiny 011}},\; f_{\text{\tiny 030}}\},\\
  &\widehat{F}_J^{(3)} = \{ x_2 f_{\text{\tiny 011}}\},\quad \textit{SF}_J^{(3)} = \{x_2 f_{\text{\tiny 011}} - x_1 f_{\text{\tiny 002}}\},\\
(s=4)\qquad &F_J^{(4)} = \left\{ \begin{array}{l} x_2^2 f_{\text{\tiny 002}},\; x_2x_1 f_{\text{\tiny 002}},\; x_2x_0 f_{\text{\tiny 002}},\; x_1^2 f_{\text{\tiny 002}},\; x_1x_0 f_{\text{\tiny 002}},\; x_0^2 f_{\text{\tiny 002}},\\  x_1^2 f_{\text{\tiny 011}},\; x_1x_0 f_{\text{\tiny 011}},\; x_0^2 f_{\text{\tiny 011}},\; x_1 f_{\text{\tiny 030}},\; x_0 f_{\text{\tiny 030}}\end{array}\right\},\\
&\widehat{F}_J^{(4)} = \{x_2^2 f_{\text{\tiny 011}},\; x_2x_1 f_{\text{\tiny 011}},\; x_2x_0 f_{\text{\tiny 011}}, x_2f_{\text{\tiny 030}} \},\\
&\textit{SF}_J^{(4)} = \{x_2^2 f_{\text{\tiny 011}} - x_2x_1 f_{\text{\tiny 002}},x_2x_1 f_{\text{\tiny 011}} - x_1^2 f_{\text{\tiny 002}},x_2x_0 f_{\text{\tiny 011}} - x_1x_0 f_{\text{\tiny 002}},x_2f_{\text{\tiny 030}} - x_1^2 f_{\text{\tiny 011}} \}.\\
\end{split}
\]
In order to study the sets of polynomials $\widetilde{F}_J^{(s)}$ and the module $\cN(J,I)$, we need to know explicitly the $J$-marked set, so let us consider for instance:
\[
F_J = \left\{
f_{\text{\tiny 002}} = x_2^2 + 3x_1^2 - x_2x_0 + x_1x_0,\
f_{\text{\tiny 011}} = x_2x_1 - x_1x_0,\
f_{\text{\tiny 030}} = x_1^3 - 3 x_1^2 x_0
\right\}
\]
and let $I := (F_J)$. For $s=2$, we have $\widetilde{F}_J^{(2)} = F_J^{(2)}$ and $\cN(J,I) = \emptyset$.

$(s=3)$. In order to construct $\widetilde{F}_J^{(3)}$, we have to determine the equivalence classes of monomials in the quotient $A[x_0,x_1,x_2]_3/\langle F_J^{(3)}\rangle \simeq \langle \cN(J)_3\rangle$. If $h\in A[x_0,x_1,x_2]_s$, we denote by $\overline{h}$ its class  in $A[x_0,x_1,x_2]_s/\langle F_J^{(s)}\rangle$. Following the strategy of the proof of Theorem \ref{th:markedSetChar}, we examine the monomials of $J_3$ in increasing order w.r.t.~the $\Lex$ ordering.
\[
\begin{split}
& \overline{x_1^3} \stackrel{-f_{\text{\tiny 030}}}{=} \overline{3 x_1^2 x_0} \quad \Rightarrow \quad\widetilde{f}_{\text{\tiny 030}} = f_{\text{\tiny 030}},\\
&\overline{x_2x_1x_0} \stackrel{-x_0f_{\text{\tiny 011}}}{=} \overline{x_1 x_0^2} \quad \Rightarrow \quad\widetilde{f}_{\text{\tiny 111}} = x_0f_{\text{\tiny 011}},\\
&\overline{x_2x_1^2} \stackrel{-x_1f_{\text{\tiny 011}}}{=} \overline{x_1^2 x_0}  \quad \Rightarrow \quad\widetilde{f}_{\text{\tiny 021}} = x_1f_{\text{\tiny 011}},\\
&\overline{x_2^2x_0} \stackrel{-x_0f_{\text{\tiny 002}}}{=} \overline{-3x_1^2 x_0 + x_2x_0^2 -x_1x_0^2}  \quad \Rightarrow \quad\widetilde{f}_{\text{\tiny 102}} = x_0f_{\text{\tiny 002}},\\
&\overline{x_2^2x_1} \stackrel{-x_1f_{\text{\tiny 002}}}{=} \overline{-3x_1^3 + x_2x_1x_0 -x_1^2x_0} \stackrel{3\widetilde{f}_{\text{\tiny 030}}}{=} \overline{x_2x_1x_0 -10 x_1^2x_0} = {}\\
&\phantom{\overline{x_2^2x_1}} \stackrel{-\widetilde{f}_{\text{\tiny 111}}}{=} \overline{-10 x_1^2x_0 + x_1x_0^2} \quad \Rightarrow \quad\widetilde{f}_{\text{\tiny 012}} = x_1f_{\text{\tiny 002}} - 3f_{\text{\tiny 030}} + x_0 f_{\text{\tiny 011}},\\
&\overline{x_2^3} \stackrel{-x_2f_{\text{\tiny 002}}}{=} \overline{-3x_2x_1^2 + x_2^2x_0 -x_2x_1x_0} \stackrel{3\widetilde{f}_{\text{\tiny 021}}}{=} \overline{x_2^2x_0 -x_2x_1x_0 -3x_1^2x_0} = {}\\
&\phantom{\overline{x_2^2x_1}} \stackrel{-\widetilde{f}_{\text{\tiny 102}}}{=} \overline{-x_2x_1x_0-6x_1^2x_0+x_2x_0^2-x_1x_0^2} = {}\\
&\phantom{\overline{x_2^2x_1}} \stackrel{\widetilde{f}_{\text{\tiny 111}}}{=} \overline{-6x_1^2x_0+x_2x_0^2-2x_1x_0^2} \quad \Rightarrow \quad\widetilde{f}_{\text{\tiny 003}} = x_2f_{\text{\tiny 002}} - 3x_1f_{\text{\tiny 011}} + x_0 f_{\text{\tiny 002}} - x_0f_{\text{\tiny 011}}.\\
\end{split}
\]
To determine $\cN(J,I)_3$, we can compute the class of the polynomial of $\textit{SF}_J^{(3)}$ in $A[x_0,x_1,x_2]_3/\langle F_J^{(3)}\rangle = A[x_0,x_1,x_2]_3/\langle \widetilde{F}_J^{(3)}\rangle$:
\[
\overline{x_2 f_{\text{\tiny 011}} - x_1 f_{\text{\tiny 002}}} = \overline{-3x_1^3-x_1^2x_0} \stackrel{3\widetilde{f}_{\text{030}}}{=} \overline{-10x_1^2x_0}
\]
so that $\cN(J,I)_3 = \langle 10 x_1^2x_0\rangle$.

\smallskip

$(s=4)$. Repeating the same procedure applied for $s=3$, we obtain:
\small
\[
\begin{split}
 \widetilde{f}_{\text{\tiny 130}} &{}= x_0 {f}_{\text{\tiny 030}} = x_1^3x_0 -3 x_1^2x_0^2,\\
 \widetilde{f}_{\text{\tiny 040}} &{}= x_1 {f}_{\text{\tiny 030}} - bx_0{f}_{\text{\tiny 030}} = x_1^4 - 9 x_1^2x_0^2,\\
 \widetilde{f}_{\text{\tiny 211}} &{}= x_0^2{f}_{\text{\tiny 011}} = x_2x_1x_0^2 -  x_1x_0^3,\\
 \widetilde{f}_{\text{\tiny 121}} &{}= x_1x_0{f}_{\text{\tiny 011}} = x_2x_1^2x_0 -  x_1^2x_0^2,\\
 \widetilde{f}_{\text{\tiny 031}} &{}= x_1^2 {f}_{\text{\tiny 011}} + x_0 {f}_{\text{\tiny 030}} = x_2x_1^3 -3 x_1^2x_0^2,\\
 \widetilde{f}_{\text{\tiny 202}} &{}= x_0^2{f}_{\text{\tiny 002}} = x_2^2x_0^2 + 3x_1^2x_0^2 - x_2x_0^3 +x_1x_0^3,\\
 \widetilde{f}_{\text{\tiny 112}} &{}= x_1x_0{f}_{\text{\tiny 002}} - 3x_0{f}_{\text{\tiny 030}} + x_0^2 {f}_{\text{\tiny 211}} = x_2^2x_1x_0 + 10 x_1^2x_0^2 - x_1 x_0^3,  \\
 \widetilde{f}_{\text{\tiny 022}} &{}= x_1^2 {f}_{\text{\tiny 002}} - 3x_1 {f}_{\text{\tiny 030}} + x_1x_0 {f}_{\text{\tiny 011}} - 10 x_0 {f}_{\text{\tiny 030}} = x_2^2x_1^2 + 29 x_1^2x_0^2, \\
 \widetilde{f}_{\text{\tiny 103}} &{}= x_2x_0 {f}_{\text{\tiny 002}} - 3x_1x_0{f}_{\text{\tiny 011}} + x_0^2 {f}_{\text{\tiny 002}} - x_0^2{f}_{\text{\tiny 011}} = x_2^3x_0 + 6x_1^2x_0^2 - x_2x_0^3 + 2x_1x_0^3,    \\
\widetilde{f}_{\text{\tiny 013}} &{}= x_2x_1 {f}_{\text{\tiny 002}} -3x_1^2{f}_{\text{\tiny 011}} + x_1x_0 {f}_{\text{\tiny 002}} - x_1x_0{f}_{\text{\tiny 011}} -6 x_0 {f}_{\text{\tiny 030}} + x_0^2 {f}_{\text{\tiny 011}} = x_2^3x_1 + 20 x_1^2x_0^2 - x_1x_0^3,     \\
 \widetilde{f}_{\text{\tiny 004}} &{}= x_2^2{f}_{\text{\tiny 002}} - 3x_1^2{f}_{\text{\tiny 002}} + 9x_1{f}_{\text{\tiny 030}} + x_2x_0{f}_{\text{\tiny 002}} - x_1x_0{f}_{\text{\tiny 002}} - 6 x_1x_0 {f}_{\text{\tiny 011}} +33 x_0{f}_{\text{\tiny 030}} + x_0^2 {f}_{\text{\tiny 002}} - 2x_0^2 {f}_{\text{\tiny 011}} = {}\\
&{} = x_2^4 -91x_1^2x_0^2 - x_2x_0^3+3x_1x_0^3.       \\
\end{split}
\]
\normalsize
Moreover,  $\cN(J,I)_4 = \left\langle 10 x_1^2x_0^2\right\rangle$ as in the quotient $A[x_0,x_1,x_2]_4/\langle F_J^{(4)}\rangle$ we have
\[
\begin{split}
&\overline{x_2^2 f_{\text{\tiny 011}} - x_2x_1 f_{\text{\tiny 002}}} = \overline{-10 x_1^2x_0^2},\qquad
\overline{x_2x_1 f_{\text{\tiny 011}} - x_1^2 f_{\text{\tiny 002}}} = \overline{-30 x_1^2x_0^2},\\
&\overline{x_2x_0 f_{\text{\tiny 011}} - x_1x_0 f_{\text{\tiny 002}}} = \overline{-10 x_1^2x_0^2},\qquad 
\overline{x_2f_{\text{\tiny 030}} - x_1^2 f_{\text{\tiny 011}}} = \overline{0}.
\end{split}
\]
\normalsize
Therefore, $F_J$ is not a $J$-marked basis unless $10 = 0$ in $A$ (cf.~Theorem \ref{th:markedSetChar}).
\end{example}
We conclude this section giving a characterization of a $J$-marked basis $F_J$ that takes into account the homogeneous pieces $(F_J)_s$ of the ideal it generates for a
limited number of degrees. The following statement is clearly inspired by Gotzmann's Persistence Theorem, but we emphasize that the proof is independent from that result. 

\begin{theorem}\label{th:rifatto}
Let $J$ be a strongly stable ideal, $m$ be the maximum degree of monomials in its minimal monomial basis $\B_J$ and $I$ be the ideal in $A[\xx]$ generated by 
a $J$-marked set $F_J$. TFAE:
\begin{enumerate}[(i)]
\item\label{it:rifatto_i} $F_J$ is a $J$-marked basis;
\item\label{it:rifatto_ii} as an $A$-module, $I_{s} = \langle{F}^{(s)}_J\rangle$ for every $s \leq m+1$;
\item\label{it:rifatto_iii} as an $A$-module, $I_{s} = \langle\widetilde{F}^{(s)}_J\rangle$ for every $s \leq m+1$;
\item\label{it:rifatto_iv}   $\cN(J,I)_s=0$ for every $s \leq m+1$.
\end{enumerate}
\end{theorem}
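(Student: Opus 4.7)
\emph{Proof plan.} The implication (\ref{it:rifatto_i}) $\Rightarrow$ (\ref{it:rifatto_ii}), (\ref{it:rifatto_iii}), (\ref{it:rifatto_iv}) follows directly from Theorem \ref{th:markedSetChar} restricted to degrees $s\leq m+1$. Moreover, at each fixed degree $s$ the three properties (\ref{it:rifatto_ii}), (\ref{it:rifatto_iii}), (\ref{it:rifatto_iv}) are equivalent: Theorem \ref{th:markedSetChar}\emph{(\ref{it:markedSetChar_iii})}--\emph{(\ref{it:markedSetChar_iv})} gives $\langle F_J^{(s)}\rangle=\langle\widetilde F_J^{(s)}\rangle$ and $I_s=\langle\widetilde F_J^{(s)}\rangle\oplus\cN(J,I)_s$, so $I_s=\langle F_J^{(s)}\rangle$ iff $I_s=\langle\widetilde F_J^{(s)}\rangle$ iff $\cN(J,I)_s=0$. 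The real content of the theorem is therefore the persistence statement: (\ref{it:rifatto_ii}) at degrees $s\leq m+1$ implies (\ref{it:rifatto_i}), which I will prove by induction on $s$.

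The base case $s\leq m+1$ is given, so fix $s\geq m+2$ and assume $I_{s'}=\langle F_J^{(s')}\rangle$ for every $s'<s$. Since $F_J$ lies in degrees $\leq m$, $I_s=A[\xx]_1\cdot I_{s-1}=\sum_i x_i\langle F_J^{(s-1)}\rangle$, and it suffices to prove $x_i\cdot x^\delta f_\alpha\in\langle F_J^{(s)}\rangle$ for every $x^\delta f_\alpha\in F_J^{(s-1)}$ and every variable $x_i$. The case $x_i\leq\min x^\alpha$ is trivial, so assume $x_i>\min x^\alpha$. Then $x_ix^\alpha\in J\setminus\B_J$ and by Proposition \ref{prop:borelProducts}\emph{(\ref{it:borelProducts_i})} there is a canonical decomposition $x_ix^\alpha=x^{\alpha'}\ast_J x^{\delta'}$ with $x^{\alpha'}\in\B_J$. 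A short case analysis---if $x_i\nmid x^{\alpha'}$ then $x^{\alpha'}\mid x^\alpha$, forcing $x^{\alpha'}=x^\alpha$ by minimality of $\B_J$ and yielding the contradiction $\max x^{\delta'}=x_i>\min x^\alpha=\min x^{\alpha'}$---shows that $x_i\mid x^{\alpha'}$ and consequently $\min x^{\alpha'}\geq\min x^\alpha$.

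This yields the identity
\[
x_i x^\delta f_\alpha \;=\; x^\delta x^{\delta'} f_{\alpha'} \;+\; x^\delta\cdot q, \qquad q\,:=\,x_i f_\alpha-x^{\delta'}f_{\alpha'}\in I_{\vert\alpha\vert+1}.
\]
The leading summand lies in $F_J^{(s)}$, because $\max x^\delta\leq\min x^\alpha\leq\min x^{\alpha'}$ and $\max x^{\delta'}\leq\min x^{\alpha'}$. Since $\vert\alpha\vert+1\leq m+1$, the base hypothesis gives $q\in\langle F_J^{(\vert\alpha\vert+1)}\rangle$; writing $q=\sum_l c_l x^{\delta_l}f_{\alpha_l}$ with $\max x^{\delta_l}\leq\min x^{\alpha_l}$, every summand of $x^\delta q=\sum_l c_l x^\delta x^{\delta_l}f_{\alpha_l}$ is either already in $F_J^{(s)}$ (when $\max x^\delta\leq\min x^{\alpha_l}$) or in $\widehat F_J^{(s)}$.

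The main technical obstacle is handling the ``bad'' summands $x^\eta f_{\alpha_l}\in\widehat F_J^{(s)}$ that arise here: for each such term we apply the same replacement trick, producing a new leading summand and a correction of degree $\leq m+1$ that is again covered by the hypothesis. To guarantee termination one introduces the well-founded monovariant $(\max x^\eta,\mathrm{mult}_{\max x^\eta}(x^\eta))$ ordered lexicographically; the crucial step is to check that the monomial $x^{\delta'_l}$ arising from the decomposition of $(\max x^\eta)\cdot x^{\alpha_l}$ cannot contain the variable $\max x^\eta$ (otherwise a divisibility argument using the minimality of $x^{\alpha_l}\in\B_J$ forces $x^{\alpha'_l}=x^{\alpha_l}$ and then $\max x^{\delta'_l}>\min x^{\alpha'_l}$, a contradiction), so at each recursive step either the maximum variable of the multiplier strictly decreases or its multiplicity does. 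Combined with repeated appeals to the base hypothesis for all intermediate correction polynomials, this terminating recursion closes the induction.
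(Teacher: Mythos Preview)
Your argument is correct, but it follows a genuinely different route from the paper's proof. The paper dispatches the persistence step with a short double--minimality contradiction: take the least degree $t$ where $I_t\neq\langle F_J^{(t)}\rangle$, then among the offending products $x_i\,x^\delta f_\alpha\notin\langle F_J^{(t)}\rangle$ take one with $i$ minimal; since $t-1>m$ one can strip off $x_j=\max x^\delta$ from $x^\delta$, observe that $x_i x^{\delta'} f_\alpha\in I_{t-1}=\langle F_J^{(t-1)}\rangle$, and then multiplying back by $x_j<x_i$ lands in $\langle F_J^{(t)}\rangle$ by minimality of $i$, a contradiction. No explicit rewriting or termination measure is needed.

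Your approach is instead constructive: you exhibit an explicit rewriting procedure that expresses each $x_i x^\delta f_\alpha$ as an $A$--linear combination of elements of $F_J^{(s)}$, using the base hypothesis in degrees $\leq m+1$ to absorb the EK--type corrections and the lexicographic monovariant $(\max x^\eta,\mathrm{mult}_{\max x^\eta}(x^\eta))$ to guarantee termination. The key combinatorial point you isolate---that the factor $x^{\delta'}$ arising from $x_k x^{\alpha_l}=x^{\alpha'_l}\ast_J x^{\delta'_l}$ satisfies $\max x^{\delta'_l}<x_k$---is exactly what makes the monovariant drop on the ``leading'' branch, and for the ``correction'' branch the observation that any summand $x^{\delta_m}f_{\alpha_m}\in F_J^{(|\alpha_l|+1)}$ with $\max x^{\delta_m}\geq x_k$ is automatically good (since then $\min x^{\alpha_m}\geq x_k\geq\max(x^\eta/x_k)$) handles the remaining case. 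What you gain is an effective rewriting algorithm and a clearer view of the underlying combinatorics; what the paper's argument gains is brevity---three lines instead of a recursion with a termination proof.
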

\begin{proof} 
\emph{(\ref{it:rifatto_i})}$\Rightarrow$\emph{(\ref{it:rifatto_ii})} Straightforward by Theorem \ref{th:markedSetChar}\emph{(\ref{it:markedSetChar_vi})}.

\smallskip

\emph{(\ref{it:rifatto_ii})}$\Rightarrow$\emph{(\ref{it:rifatto_i})} We want to prove that for every $s$, $A[\xx]_s = I_s \oplus \langle\cN(J)_s\rangle$. 
This is true for $s\leq m+1$ by hypothesis. By Theorem \ref{th:markedSetChar}\emph{(\ref{it:markedSetChar_ii})-(\ref{it:markedSetChar_iii})}, we  know that $A[\xx]_s 
= \langle {F}_J^{(s)}\rangle \oplus \langle\cN(J)_s\rangle$ and $\langle {F}_J^{(s)}\rangle \subseteq I_s$, so that we need to prove $I_s
\subseteq \langle {F}_J^{(s)}\rangle$.
Let us assume that this is not true and let $t$ be the minimum degree for which $I_t \not \subseteq \langle {F}_J^{(t)}\rangle$. Note that $t \geq m+2 > m$ 
and $I_t = x_0I_{t-1} + \cdots + x_nI_{t-1}$.

Since $I_{t-1}= \langle{F}_J^{(t-1)}\rangle$,  there should exist a variable $x_i$  such that $x_i I_{t-1}\not \subseteq \langle 
{F}_J^{(t)}\rangle$, or equivalently  $x_i  {F}_J^{(t-1)} \not \subseteq \langle{F}_J^{(t)}\rangle$.
Assume that $x_i$ has the minimum index  and take a polynomial 
$ x^\delta f_\alpha \in {F}_J^{(t-1)}$, with $x^\alpha=\Ht(f_\alpha)  \in \B_J$,  such that $x_i  x^\delta f_\alpha\notin  \langle{F}_J^{(t)}\rangle $. 
The variable  $x_i$ has to be greater than $\min x^\alpha$, since otherwise  $x_i x^\delta f_\alpha \in  {F}_J^{(t)}$. Moreover, $\vert \delta \vert >0 $
since $t-1>m$.   Let $x_j =
\max x^\delta \leq \min x^\alpha < x_i$ and $x^{\delta'} = \frac{x^\delta}{x_j}$. The polynomial $x_i x^{\delta'} f_\alpha$ is contained in 
$ I_{t-1}$, while  $x_j(x_i x^{\delta'} f_\alpha)=x_ix^\delta f_\alpha $ is not  contained in  $ \langle{F}_J^{(t-1)}\rangle$, contradicting the 
minimality of $i$.

\smallskip

\emph{(\ref{it:rifatto_ii})}$\Leftrightarrow$\emph{(\ref{it:rifatto_iii})}$\Leftrightarrow$\emph{(\ref{it:rifatto_iv})}  Straightforward by  Theorem \ref{th:markedSetChar}.
\end{proof}

\section{Definition and representability of marked functors}

We follow the notation for functors used in \cite{HaimSturm}. 
 The main object of interest in the present paper is   the set
\begin{equation}\label{eq:markedfamily}
\MFFunctor{J}(A) := \big\{ \text{ideals }I \subseteq A[\xx]\ \vert\ A[\xx] = I \oplus {}_A\langle \cN(J)\rangle\big\}
\end{equation}
which is defined for every   noetherian ring $A$ and every strongly stable ideal  $J\subseteq A[\xx]$.   In this  section we will prove that 
 this construction is in fact functorial, i.e.~$\MFFunctor{J}(A)$ is the evaluation in the noetherian ring $A$ of a functor
\begin{equation*}
\MFFunctor{J}: \underline{\textnormal{Noeth-Rings}} \rightarrow \Sets.
\end{equation*}

Now we will describe the elements of any $\MFFunctor{J}(A)$ in terms of the notion of  $J$-marked basis, discussed in the previous section. 
This will be a key point to prove 
its functoriality.

\begin{proposition}\label{prop:markedSetExists}
Let $J$ be a strongly stable ideal  and let $I$ be an element of $\MFFunctor{J}(A)$. 
\begin{enumerate}[(i)]	
\item\label{it:markedSetExists_i} The ideal $I$ contains a  unique  $J$-marked set $F_J$.
\item\label{it:markedSetExists_ii} $I = (F_J)$ and $F_J$ is the unique $J$-marked basis  contained in $I$.
\end{enumerate}
\end{proposition}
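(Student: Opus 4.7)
The plan is to exploit the direct-sum decomposition $A[\xx] = I \oplus {}_A\langle\cN(J)\rangle$ twice: once on each graded piece to produce the marked set, and once globally to upgrade it to a marked basis.

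First I would observe that any $I \in \MFFunctor{J}(A)$ is automatically homogeneous. Indeed, since ${}_A\langle\cN(J)\rangle$ is a graded $A$-submodule, for any $f = \sum_s f_s \in I$ the decomposition of each component $f_s = g_s + n_s$ with $g_s \in I$ and $n_s \in {}_A\langle\cN(J)\rangle$ gives $f = \sum g_s + \sum n_s$, and by uniqueness of the decomposition $\sum n_s = 0$, hence each $n_s = 0$ and $f_s \in I$. So $I$ is graded, and for every $s$ we have $A[\xx]_s = I_s \oplus {}_A\langle\cN(J)_s\rangle$.

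For (i), I would construct $F_J$ component-by-component: for each $x^\alpha \in \B_J$, use the degree-$|\alpha|$ decomposition to write uniquely $x^\alpha = f_\alpha + N_\alpha$ with $f_\alpha \in I_{|\alpha|}$ and $N_\alpha = \sum_{x^\beta \in \cN(J)_{|\alpha|}} c_{\alpha\beta}\, x^\beta$. Then $f_\alpha = x^\alpha - N_\alpha$ is a monic marked polynomial with head term $x^\alpha$, and $F_J := \{f_\alpha : x^\alpha \in \B_J\}$ is a $J$-marked set contained in $I$. Uniqueness is immediate: if $F'_J = \{f'_\alpha\}$ is another such marked set in $I$, then $f_\alpha - f'_\alpha$ has support in $\cN(J)$ (the head terms cancel) and lies in $I$, hence in $I \cap {}_A\langle\cN(J)\rangle = 0$.

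For (ii), the inclusion $(F_J) \subseteq I$ is trivial. For the reverse, take $h \in I$ homogeneous of degree $s$. By Theorem \ref{th:markedSetChar}\emph{(\ref{it:markedSetChar_ii})} applied to the $J$-marked set $F_J$ just built, $A[\xx]_s = \langle F_J^{(s)}\rangle \oplus \langle\cN(J)_s\rangle$, so I can write $h = g + n$ with $g \in \langle F_J^{(s)}\rangle \subseteq (F_J)$ and $n \in \langle\cN(J)_s\rangle$. Then $n = h - g \in I \cap {}_A\langle\cN(J)\rangle = 0$, so $h = g \in (F_J)$. Thus $I = (F_J)$, and combining with $A[\xx] = I \oplus {}_A\langle\cN(J)\rangle$ gives $A[\xx] = (F_J) \oplus {}_A\langle\cN(J)\rangle$, which is exactly the defining property of a $J$-marked basis. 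Uniqueness of the basis follows from uniqueness of the marked set in (i), since every marked basis is in particular a marked set contained in $I$.

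There is no real obstacle here; the only subtlety is that (ii) relies on Theorem \ref{th:markedSetChar}\emph{(\ref{it:markedSetChar_ii})}, which itself depends on the strong stability of $J$ and on $F_J$ being a marked set, so the two parts must be proved in the order (i) then (ii).
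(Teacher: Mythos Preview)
Your arguments for (i) and (ii) are correct and essentially identical to the paper's: both extract $f_\alpha$ from the degree-$|\alpha|$ decomposition and then compare the two direct-sum complements of $\langle\cN(J)_s\rangle$ inside $A[\xx]_s$ (namely $I_s$ and $\langle F_J^{(s)}\rangle$, the latter coming from Theorem~\ref{th:markedSetChar}\emph{(\ref{it:markedSetChar_ii})}) to conclude they coincide.

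The one genuine gap is your preliminary claim that every $I \in \MFFunctor{J}(A)$ is automatically homogeneous. The step ``$\sum_s n_s = 0$, hence each $n_s = 0$'' fails because $n_s$, the $\langle\cN(J)\rangle$-component of $f_s$ under the \emph{ungraded} decomposition $A[\xx] = I \oplus \langle\cN(J)\rangle$, need not itself be homogeneous of degree $s$. In fact the claim is false: with $A=k$ a field, $J=(x_0)\subset k[x_0]$, and $I=(x_0+c)$ for $c\in k^\times$, one checks $k[x_0]=I\oplus k\cdot 1 = I\oplus \langle\cN(J)\rangle$, yet $I$ is not homogeneous and contains no $J$-marked set (the only candidate $\{x_0\}$ does not lie in $I$). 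The paper sidesteps this by tacitly restricting to homogeneous ideals throughout, as is standard when the ambient object is $\Proj A[\xx]$ and the goal is to compare with Hilbert schemes; you should simply take homogeneity as part of the definition of $\MFFunctor{J}(A)$ rather than try to derive it.
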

\begin{proof}
\emph{(\ref{it:markedSetExists_i})} Let $x^\alpha$ be a minimal generator of $J$ and consider its image by the projection $A[\xx] \xrightarrow{\pi_I} A[\xx]/I$. Since
$A[\xx]_{\vert\alpha\vert}/I_{\vert\alpha\vert} \simeq \langle \cN(J)_{\vert\alpha\vert}\rangle$, $\pi_I(x^\alpha)$ is given by a linear
combination $\sum c_{\alpha\beta} x^\beta$ of the monomials $x^\beta \in \cN(J)_{\vert\alpha\vert}$. Therefore, $\ker \pi$ contains a unique homogeneous polynomial $f_{\alpha} = x^\alpha - \sum c_{\alpha\beta} x^\beta$ with head term $x^\alpha$. The collection of all $f_{\alpha}$, for $x^\alpha \in \mathcal{B}_J$, is the unique $J$-marked set. 

 \emph{(\ref{it:markedSetExists_ii})}  Starting from  $ F_J$ we can construct, for every  degree $s$, the sets of polynomials  $ F_J^{(s)}$ and $\widetilde  F_J^{(s)}$
 as in Theorem \ref{th:rifatto}. Recall that they are both contained in the ideal $(F_J)\subseteq I$.   In order to show that $F_J$ is a $J$-marked basis and  generates $I$, we observe  that for every 
 $s$, $\langle F_J^{(s)}\rangle \subseteq I_s$ and 
$\langle F_J^{(s)}\rangle \oplus \langle\cN(J)_s\rangle = A[\xx]_s $   by Theorem \ref{th:markedSetChar}\emph{(\ref{it:markedSetChar_ii})}. Moreover $I_s \oplus \langle\cN(J)_s\rangle
= A[\xx]_s $,  since $I\in \MFFunctor{J}(A)$. Therefore, $I_s=\langle F_J^{(s)}\rangle=(F_J^{(s)})_s$ in every degree $s$.
Finally, $F_J$ is a $J$-marked basis by Theorem \ref{th:markedSetChar}\emph{(\ref{it:markedSetChar_v}})-\emph{(\ref{it:markedSetChar_vi})}  and  is unique by \emph{(\ref{it:markedSetExists_i})}.
\end{proof}

\begin{remark}\label{rk:conesempio}
We emphasize that uniqueness is not true for a $J$-marked set generating an ideal $I \notin \MFFunctor{J}(A)$. For instance, consider the strongly 
stable ideal $J = (x_2^2,x_2x_1,x_1^3) \subseteq \ZZ[x_0,x_1,x_2]$. The $J$-marked set $F_J = \{x_2^2 + x_0^2, x_2x_1,x_1^3\}$ defines an ideal $I = (F_J)$ not contained
in $\MFFunctor{J}(\ZZ)$ as $x_1x_0^2 = x_1(x_2^2+x_0^2) - x_2 (x_2x_1) \in I\cap \cN(J)$.
In fact, the ideal $I$ is generated by infinitely many $J$-marked sets $\{x_2^2 + x_0^2, x_2x_1,x_1^3 + a\,x_1x_0^2\},\ a\in \ZZ$.
\end{remark}

As a consequence of the previous result, we are now able to  give a new description  of  $\MFFunctor{J}(A)$: 
\[
\MFFunctor{J}(A)= \left\{ \text{ideals }I \subseteq A[\xx ] \ \vert\ I  \text{ is generated by a } J\text{-marked basis} \right\}.
\]
For every strongly stable ideal $J$, let us consider the map between the category 
of noetherian rings to the category of sets
\begin{equation}\label{eq:markedFunctor}
\MFFunctor{J}: \underline{\text{Noeth-Rings}} \rightarrow \Sets
\end{equation}
that associates to a noetherian ring $A$ the set $\MFFunctor{J}(A)$ and to a morphism $\phi: A \rightarrow B$ the map
\begin{equation}\label{eq:markedFamilyMap}
\begin{split}
\MFFunctor{J}(\phi):\ \MFFunctor{J}(A)\ &\longrightarrow\ \MFFunctor{J}(B)\\
\parbox{1.5cm}{\centering $I$}\ & \longmapsto\ I \otimes_A B.
\end{split}
\end{equation}

\begin{proposition}\label{prop:mfFunctor}
For every strongly stable ideal $J$, $\MFFunctor{J}$ is a functor.
\end{proposition}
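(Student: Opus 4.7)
The plan is to verify the two ingredients of functoriality: well-definedness of $\MFFunctor{J}(\phi)$ on objects, and compatibility with identities and compositions. The substance is entirely in the first ingredient; the second will be a formal consequence of standard properties of tensor products.

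First I would show that if $\phi : A \to B$ is a morphism of noetherian rings and $I \in \MFFunctor{J}(A)$, then $I \otimes_A B$ lies in $\MFFunctor{J}(B)$. The key observation is that the defining decomposition $A[\xx] = I \oplus {}_A\langle\cN(J)\rangle$ exhibits ${}_A\langle\cN(J)\rangle$ as a free $A$-module direct summand of $A[\xx]$, so the short exact sequence
\[
0 \to I \to A[\xx] \to {}_A\langle\cN(J)\rangle \to 0
\]
is split, and in particular $I$ is a flat $A$-module. Tensoring with $B$ preserves the split exact sequence, and using the canonical identifications $A[\xx]\otimes_A B = B[\xx]$ and ${}_A\langle\cN(J)\rangle \otimes_A B = {}_B\langle\cN(J)\rangle$ (the latter because $\cN(J)$ is a free generating set on both sides), we obtain
\[
0 \to I \otimes_A B \to B[\xx] \to {}_B\langle\cN(J)\rangle \to 0,
\]
which is again split, so $B[\xx] = (I\otimes_A B) \oplus {}_B\langle\cN(J)\rangle$ as $B$-modules.

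Next I would check that the image of $I\otimes_A B$ in $B[\xx]$ is actually an ideal, not merely a $B$-submodule. This follows because the image is the $B$-linear span of elements of the form $\phi(f)$ for $f\in I$; since $I$ is closed under multiplication by $A[\xx]$ and every element of $B[\xx]$ is a $B$-linear combination of (images of) monomials in $A[\xx]$, the $B$-span of $\phi(I)$ coincides with the $B[\xx]$-ideal it generates. Alternatively, one can use Proposition \ref{prop:markedSetExists}: the unique $J$-marked basis $F_J$ of $I$ maps to a $J$-marked set $\phi(F_J)$ in $B[\xx]$, and the direct sum decomposition just established combined with Theorem \ref{th:markedSetChar}\emph{(\ref{it:markedSetChar_viii})} forces $\phi(F_J)$ to be a $J$-marked basis generating $I\otimes_A B$.

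Finally, functoriality in the categorical sense is immediate: $\MFFunctor{J}(\mathrm{id}_A)(I)=I\otimes_A A = I$, and for composable $\phi:A\to B$, $\psi:B\to C$ the canonical isomorphism $(I\otimes_A B)\otimes_B C \simeq I\otimes_A C$ yields $\MFFunctor{J}(\psi\circ\phi) = \MFFunctor{J}(\psi)\circ\MFFunctor{J}(\phi)$. There is no real obstacle here; the only subtle point is recognizing that the freeness of ${}_A\langle\cN(J)\rangle$ is precisely what makes the characterizing decomposition survive arbitrary base change, and this is exactly the structural feature that will later underpin $A$-flatness of the associated family.
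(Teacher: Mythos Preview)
Your argument is correct and rests on the same key observation as the paper's proof: the direct sum $A[\xx] = I \oplus {}_A\langle\cN(J)\rangle$ of free $A$-modules is preserved under extension of scalars. The only difference is packaging---the paper first invokes Proposition~\ref{prop:markedSetExists} to pass to the unique $J$-marked basis $F_{J,A}$, applies $\phi$ coefficientwise to obtain a $J$-marked set $F_{J,B}$, and then uses preservation of the direct sum to conclude $F_{J,B}$ is a $J$-marked basis; you instead tensor the split exact sequence directly and verify afterward that the image is an ideal (and you even mention the marked-basis route as your alternative). Both are the same argument, and your version is arguably the more streamlined one since it stays with the original definition of $\MFFunctor{J}(A)$.
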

\begin{proof}
Consider the $J$-marked basis $F_{J,A}$ generating the ideal $I \in \MFFunctor{J}(A)$. Any morphism $\phi: A \rightarrow B$ gives the structure of $A$-module to $B$. 
Thus, tensoring $I$ by $B$ leads to the following transformation on the $J$-marked basis $F_{J,A}$:
\[
f_{\alpha,A} = x^\alpha - \sum c_{\alpha\beta} x^\beta \in F_{J,A} \quad \longmapsto\quad f_{\alpha,B} = x^\alpha - \sum \phi(c_{\alpha\beta}) x^\beta.
\]
since $\phi(1_A) = 1_B$, the set $F_{J,B} := \{f_{\alpha,B}\ \vert\ f_{\alpha,A} \in F_{J,A}\}$ is still a $J$-marked  set. Finally, $F_{J,B}$ is a
$J$-marked basis since the tensor product by $\otimes_A B$ of a direct sum of free $A$-modules is a direct sum of free $B$-modules.
\end{proof}

Now we discuss a necessary condition for this functor to be representable. 
\begin{lemma}\label{lem:mfZariskiSheaf}
For every strongly stable  ideal $J$, $\MFFunctor{J}$ is a Zariski sheaf.
\end{lemma}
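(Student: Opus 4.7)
The plan is to exploit the bijection, established in Proposition \ref{prop:markedSetExists}, between $\MFFunctor{J}(A)$ and the set of $J$-marked bases in $A[\xx]$. Since $\B_J$ is finite and, for each $x^\alpha \in \B_J$, the set $\cN(J)_{\vert\alpha\vert}$ is finite, a $J$-marked set (hence a $J$-marked basis) is determined by a finite tuple of coefficients $(c_{\alpha\beta}) \in A^N$. Consequently, $\MFFunctor{J}(A)$ is in natural bijection with the subset of $A^N$ consisting of those tuples for which the associated $J$-marked set is a $J$-marked basis, and the functorial maps $\MFFunctor{J}(\phi)$ from \eqref{eq:markedFamilyMap} correspond simply to applying $\phi$ componentwise on this tuple.

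Fix a noetherian ring $A$ and elements $f_1,\dots,f_r \in A$ with $(f_1,\dots,f_r) = A$, so that the family $\{\Spec A_{f_i}\}$ covers $\Spec A$. For the \emph{separation} property, suppose $I, I' \in \MFFunctor{J}(A)$ have the same image in $\MFFunctor{J}(A_{f_i})$ for every $i$. By Proposition \ref{prop:mfFunctor}, the $J$-marked basis of $I\otimes_A A_{f_i}$ is obtained from that of $I$ by localization, and similarly for $I'$. Hence the coefficient tuples $(c_{\alpha\beta})$ and $(c'_{\alpha\beta})$ in $A$ have equal images in every $A_{f_i}$. The standard sheaf property of the structure sheaf of $\Spec A$ implies $c_{\alpha\beta}=c'_{\alpha\beta}$ in $A$ for every $\alpha,\beta$, so $I=I'$.

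For the \emph{gluing} property, take $I_i \in \MFFunctor{J}(A_{f_i})$ with $I_i \otimes_{A_{f_i}} A_{f_if_j} = I_j \otimes_{A_{f_j}} A_{f_if_j}$ in $\MFFunctor{J}(A_{f_if_j})$. Write the corresponding $J$-marked bases as $F_{J,i}$ with coefficients $c_{\alpha\beta}^{(i)} \in A_{f_i}$. The uniqueness statement in Proposition \ref{prop:markedSetExists}\emph{(\ref{it:markedSetExists_i})} applied over $A_{f_if_j}$ forces $c_{\alpha\beta}^{(i)}$ and $c_{\alpha\beta}^{(j)}$ to coincide in $A_{f_if_j}$. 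Again by the sheaf property of $\OO_{\Spec A}$, there exist unique $c_{\alpha\beta}\in A$ restricting to $c_{\alpha\beta}^{(i)}$ for every $i$. Let $F_J$ be the resulting $J$-marked set in $A[\xx]$ and set $I:=(F_J)$.

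The remaining — and main — step is to verify that $F_J$ is in fact a $J$-marked \emph{basis}, so that $I\in \MFFunctor{J}(A)$. Here I invoke Theorem \ref{th:rifatto}\emph{(\ref{it:rifatto_iv})}: it suffices to check that $\cN(J,I)_s=0$ for all $s\leq m+1$, where $m$ is the maximum degree in $\B_J$. By construction, each generator of $\cN(J,I)_s$ is an $A$-linear combination of monomials of $\cN(J)_s$ whose coefficients are universal polynomial expressions in the $c_{\alpha\beta}$, produced by the reduction procedure of Theorem \ref{th:markedSetChar}\emph{(\ref{it:markedSetChar_iv})}. These polynomial expressions commute with the localization maps $A \to A_{f_i}$, and each $F_{J,i}$ is a marked basis, so they all vanish in $A_{f_i}$ for every $i$. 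Vanishing locally on a cover of $\Spec A$ implies vanishing in $A$, so $\cN(J,I)_s=0$ as required. Finally, by the same commutation of the $J$-marked set construction with localization, $I\otimes_A A_{f_i}$ has marked basis $F_{J,i}$ and therefore equals $I_i$, completing the verification of the sheaf axiom.
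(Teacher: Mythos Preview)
Your proof is correct and follows essentially the same approach as the paper: reduce to gluing the coefficients of the $J$-marked bases via the sheaf property of $\OO_{\Spec A}$. Two small differences are worth noting: you glue only the tail coefficients $c_{\alpha\beta}$, so the resulting polynomials are automatically monic, whereas the paper glues the full polynomials $f_{\alpha,i}$ and then checks separately that the leading coefficient is $1_A$; and you explicitly justify that the glued $J$-marked set is a $J$-marked basis by invoking Theorem~\ref{th:rifatto}, while the paper simply asserts this is ``clear'' (the implicit argument being that any $h\in \cN(J,I)$ localizes to an element of $\cN(J,I_i)=0$ for each $i$, hence vanishes in $A[\xx]$). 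Your version is thus slightly more detailed but not genuinely different.
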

\begin{proof}
Let $A$ be a noetherian ring and $U_i = \Spec A_{a_i}$, $i=1,\dots, s$, an open cover of $\Spec A$, which is equivalent to require that $(a_1, \dots, a_s)=A$. Consider a set of ideals 
$I_i \in \MFFunctor{J}(A_{a_i})$ such that for any pair of
indices $i \neq j$
\[
I_{ij} := I_i \otimes_{A_{a_i}} A_{a_i a_j} = I_j \otimes_{A_{a_j}} A_{a_i a_j} \in \MFFunctor{J}(A_{a_i a_j}).
\]
We need to show that there exists a unique ideal $I \in \MFFunctor{J}(A)$ such that $I_i = I \otimes_A A_{a_i}$ for every $i$.

Let us consider the $J$-marked bases associated to $I_i$:
\[
F_{J,i} = \left\{ f_{\alpha,i} = x^\alpha - \sum_{\mathclap{\cramped{x^\beta \in \cN(J)_{\vert\alpha\vert}}}} c_{\alpha\beta}^{(i)}\, x^\beta \in A_{a_i}[\xx]\ \; \Bigg\vert\ \; x^\alpha \in \mathcal{B}_J\right\},\quad I_i = (F_{J,i}) \subseteq A_{a_i}[\xx],\quad \forall\ i=1,\ldots,s.
\]
By assumption, for each $x^\alpha \in \B_J$ and for each pair of indices $i\neq j$, the polynomials $f_{\alpha,i}$ and $f_{\alpha,j}$ coincide on $A_{a_i a_j}[\xx]$. By the 
sheaf axiom for the quasi-coherent sheaf $\widetilde{A[\xx]}$ on $\Spec A$, we know that there exists a unique polynomial $f_{\alpha} \in A[\xx]$ whose image in
$A_{a_i}[\xx]$ 
is $f_{\alpha,i}$ for every $i$. The polynomial $f_{\alpha}$ turns out to be monic.  In fact, if $c$ is the coefficient of $x^\alpha$, then its image
in $A_{a_i}$ is 
$1_{A_{a_i}}$, so that  $(c-1_A)a_i^k=0$ for some integer $k$. Thus, $c=1_A$ since $(a_1^k, \dots, a_s^k)=A$.  
The collection of polynomials $\{f_\alpha\ :\ x^\alpha \in \B_J\}$ forms a $J$-marked basis.
\end{proof}

Now we prove that the functor $\MFFunctor{J}$ is representable finding explicitly the affine scheme $\MFScheme{J}$  representing it. To do that
we apply  the previous theorems that describe  which
conditions on the coefficients of polynomials in a $J$-marked set guarantee that the marked set is a $J$-marked basis.

  We  obtain $\MFScheme{J}$  as a closed subscheme of an affine scheme of a  suitable dimension depending on $J$.
  
\medskip

\begin{notation}\label{notazioni} Let $J$ be any  strongly stable monomial ideal in $ \ZZ[\xx]$. Then:
\begin{itemize}
 \item $\CC$ is the  set of variables of the coordinate ring of the affine scheme $\mathbb{A}^N = \Spec \ZZ[\CC]$, where 
\[
N = \sum_{x^\alpha \in \mathcal{B}_J} \left\vert \cN(J)_{\vert\alpha\vert}\right\vert.
\]
We consider the variables in $\CC$ indexed as $C_{\alpha\beta}$ where the multi-index $\alpha$ corresponds to $x^\alpha \in \mathcal{B}_J$ and the multi-index $\beta$ to $x^\beta \in \cN(J)_{\vert\alpha\vert}$.
\item $\mathcal I$ is the ideal in $\ZZ[\CC][\xx]=\ZZ[\CC]\otimes_\ZZ \ZZ[\xx]$ generated by the following  $J$-marked set 
\begin{equation}\label{eq:MarcataParametri}
  \mathcal{F}_J := \left\{ x^\alpha - \sum_{\mathclap{\cramped{x^\beta \in \cN(J)_{\vert\alpha\vert}}}} C_{\alpha\beta}\, 
  x^\beta \in \ZZ[\CC][\xx]\ \bigg\vert\
  x^\alpha \in \B_J\right\}.
  \end{equation}
\item $ \mathfrak{I}_J$    the ideal in $\ZZ[\CC]$  generated by the $\xx$-coefficients 
of the polynomials in $\cN(J,\mathcal{I})$.

\item  Every $J$-marked set $F_J=\{f_\alpha = x^\alpha - \sum_{x^\beta \in \cN(J)_{\vert\alpha\vert}} c_{\alpha\beta}\, x^\beta \ \vert\  x^\alpha \in \B_J\}$  in  $A[\xx]$ 
is uniquely identified by the coefficients $c_{\alpha\beta}\in A$, or equivalently, by the ring homomorphism 
\begin{equation*}
\phi_{F_J} : \ZZ[\CC] \rightarrow A  : C_{\alpha\beta}\mapsto c_{\alpha\beta}. 
\end{equation*}
\item Moreover, let $\phi_{F_J}[\xx]\colon \ZZ[\CC][\xx] \rightarrow A[\xx]$ be the canonical extension of $\phi_{F_J}$.
\end{itemize}
\end{notation}

\begin{theorem}\label{th:mfRepresentable}
In the notation above, the functor $\MFFunctor{J}$ is represented by $\MFScheme{J} := \Spec \ZZ[\CC]/\mathfrak{I}_J$. 
Therefore, a $J$-marked set  $F_J$ is a $J$-marked basis if, and only if, 
 $\phi_{F_J}$ factors.
 \begin{center}
\begin{tikzpicture}
\node (a) at (0,0) [] {$\ZZ[\CC]$};
\node (b) at (3,0) [] {$A$};
\node (c) at (1.5,-1.25) [] {$\ZZ[\CC]/\mathfrak{I}_{J}$};

\draw [->] (a) -- (b);
\draw [->>] (a) -- (c);
\draw [->] (c) -- (b);
\end{tikzpicture}
\end{center}
\end{theorem}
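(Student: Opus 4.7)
The plan is to use Proposition \ref{prop:markedSetExists} to identify $\MFFunctor{J}(A)$ with the set of $J$-marked bases in $A[\xx]$, and then realize that set as the zero locus of $\mathfrak{I}_J$ inside the affine parameter space $\Spec\ZZ[\CC]$. First I would observe that every $J$-marked set $F_J$ in $A[\xx]$ is uniquely specified by its coefficients $c_{\alpha\beta}\in A$, equivalently by the ring homomorphism $\phi_{F_J}:\ZZ[\CC]\to A$, $C_{\alpha\beta}\mapsto c_{\alpha\beta}$. This gives a natural bijection between $J$-marked sets in $A[\xx]$ and $\mathrm{Hom}_{\textnormal{Rings}}(\ZZ[\CC],A)$, under which $\MFFunctor{J}(A)$ corresponds, by Proposition \ref{prop:markedSetExists}, to the subset of those $\phi$ whose marked set is in fact a marked basis. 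Naturality in $A$ is immediate from (\ref{eq:markedFamilyMap}): a morphism $\psi:A\to B$ acts on coefficients by $c_{\alpha\beta}\mapsto\psi(c_{\alpha\beta})$, which on the parameter side is just precomposition of $\phi_{F_J}$ by $\psi$.

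The heart of the proof is the equivalence
\[
F_J \text{ is a } J\text{-marked basis} \iff \phi_{F_J}(\mathfrak{I}_J)=0.
\]
By Theorem \ref{th:markedSetChar}\emph{(\ref{it:markedSetChar_v})}$\Leftrightarrow$\emph{(\ref{it:markedSetChar_viii})}, the left-hand condition reads $\cN(J,(F_J))=0$. The key lemma I would establish is a base-change property: the construction that produces the generators of $\cN(J,\cdot)$ in the proof of Theorem \ref{th:markedSetChar}\emph{(\ref{it:markedSetChar_iv})}, namely building $\widetilde{F}_J^{(s)}$ via the $\Lex$-minimal $\ast_J$-decomposition and then reducing the elements of $\textit{SF}_J^{(s)}$ against $\widetilde{F}_J^{(s)}$, uses only the combinatorics of $J$ together with linear combinations, and is therefore natural under ring homomorphisms. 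Running it over $\ZZ[\CC][\xx]$ on the universal marked set $\mathcal{F}_J$ produces generators of $\cN(J,\mathcal{I})$ whose $\xx$-coefficients, by definition, generate $\mathfrak{I}_J$. Because each step commutes with $\phi_{F_J}[\xx]$, the universal computation specializes to the analogous computation for $F_J$ over $A[\xx]$: $\phi_{F_J}[\xx]$ sends the computed generators of $\cN(J,\mathcal{I})$ onto those of $\cN(J,(F_J))$. Hence $\cN(J,(F_J))=0$ precisely when $\phi_{F_J}$ annihilates every one of the prescribed $\xx$-coefficients, i.e.\ when $\phi_{F_J}(\mathfrak{I}_J)=0$, as required.

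Combining the two steps yields natural bijections
\[
\MFFunctor{J}(A)\;\simeq\;\{\phi\in\mathrm{Hom}(\ZZ[\CC],A):\phi(\mathfrak{I}_J)=0\}\;=\;\mathrm{Hom}(\ZZ[\CC]/\mathfrak{I}_J,A)\;=\;\underline{\MFScheme{J}}(A),
\]
so $\MFScheme{J}$ represents $\MFFunctor{J}$ and the factorization criterion is an immediate reformulation. The main obstacle I anticipate is the naturality-of-reduction step: one must verify with some care that the inductive recipe used in the proof of Theorem \ref{th:markedSetChar}\emph{(\ref{it:markedSetChar_iv})} really is literally parametric in $\CC$, so that exactly the same sequence of Borel-order comparisons and linear reductions carries out both the universal computation over $\ZZ[\CC]$ and its specialization after applying $\phi_{F_J}$. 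This is ultimately bookkeeping, but it is the load-bearing point of the whole argument.
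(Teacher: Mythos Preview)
Your proposal is correct and follows the same overall strategy as the paper: identify $\MFFunctor{J}(A)$ with $J$-marked bases via Proposition~\ref{prop:markedSetExists}, parametrize $J$-marked sets by $\mathrm{Hom}(\ZZ[\CC],A)$, and then show that being a marked basis is equivalent to $\phi_{F_J}(\mathfrak{I}_J)=0$ using the criterion $\cN(J,I)=0$ from Theorem~\ref{th:markedSetChar}.

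The one noteworthy difference is in the backward implication. You propose to track the explicit reduction procedure (building $\widetilde{F}_J^{(s)}$ and reducing $\textit{SF}_J^{(s)}$) and verify that it commutes with $\phi_{F_J}[\xx]$, which you rightly identify as the load-bearing bookkeeping. The paper sidesteps this entirely: it applies the abstract decomposition of Theorem~\ref{th:markedSetChar}\emph{(\ref{it:markedSetChar_iv})} once, over the universal ring $\ZZ[\CC]$, to write $\mathcal{I}_s=\langle\mathcal{F}_J^{(s)}\rangle\oplus\cN(J,\mathcal{I})_s$. If $\phi_{F_J}$ kills $\mathfrak{I}_J$ then $\phi_{F_J}[\xx]$ kills the second summand, so $\widehat{F}_J^{(s)}=\phi_{F_J}[\xx](\widehat{\mathcal{F}}_J^{(s)})\subseteq\phi_{F_J}[\xx](\langle\mathcal{F}_J^{(s)}\rangle)\subseteq{}_A\langle F_J^{(s)}\rangle$, whence $I_s=\langle F_J^{(s)}\rangle$ and one concludes by Theorem~\ref{th:markedSetChar}\emph{(\ref{it:markedSetChar_v})}--\emph{(\ref{it:markedSetChar_vi})}. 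This needs only that $\phi_{F_J}[\xx]$ sends $\mathcal{F}_J^{(s)}$ to $F_J^{(s)}$ and $\widehat{\mathcal{F}}_J^{(s)}$ to $\widehat{F}_J^{(s)}$, which is immediate, so the delicate naturality-of-reduction verification you anticipate is not actually required.
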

\begin{proof}
Let $A$ be any noetherian ring, $F_J$ be a $J$-marked set in $A[\xx]$ and $I = (F_J) \subseteq A[\xx]$.
We obtain the statement proving that $F_J$ is a $J$-marked basis if, and only if, $\ker \phi_{F_J}\supseteq \mathfrak{I}_J$. 
 
By definition, $\phi_{F_J}[\xx] $ is the identity on monomials and $\phi_{F_J}[\xx](\mathcal I)\subseteq I$, so that 
$\phi_{F_J}[\xx] \left(\cN(J,\mathcal I)\right)\subseteq \cN(J,I)$. If $F_J$ is a $J$-marked basis, then  $\cN(J,I)=0 $, hence  $\ker( \phi_{F_J})\supseteq \mathfrak{I}_J$.
 
 On the other hand, if $\ker( \phi_{F_J})\supseteq \mathfrak{I}_J$, then  for every $s$ we have  
 $$\widehat{F}_J^{(s)}=\phi_{F_J}[\xx](\widehat{\mathcal{F}}_J^{(s)}) \subseteq  \phi_{F_J}[\xx](\mathcal{I}_s)=
 \phi_{F_J}[\xx]\big( \langle  \mathcal{F}_J^{(s)}\rangle \oplus \cN(J,\mathcal{I})_s\big)= \phi_{F_J}[\xx]( \langle  \mathcal{F}_J^{(s)}\rangle )\subseteq  
 \left._A\langle F^{(s)}\rangle\right.$$
 so that $ I_s={}_A \langle  {F}_J^{(s)}\cup  \widehat{F}_J^{(s)}\rangle \subseteq {}_A\langle {F}_J^{(s)}\rangle \subseteq I_s$ 
 and we conclude applying Theorem \ref{th:markedSetChar}\emph{(\ref{it:markedSetChar_v})-(\ref{it:markedSetChar_vi})}. 
\end{proof}

In order to explicitly compute a finite set of generators of the ideal $\mathfrak{I}_J$, we can apply some of the previous results. 
By Theorem \ref{th:rifatto} we get the following simplification.

\begin{corollary}
 For every strongly stable ideal $J$, the ideal $\mathfrak{I}_J$ is generated by the $\xx$-coefficients 
of the polynomials in $\cN(J,\mathcal{I})_s$ for every $s\leq m+1$, where $m$ is the maximum degree of monomials in $\B_J$.
\end{corollary}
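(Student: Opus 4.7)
Let $\mathfrak{I}'_J\subseteq\ZZ[\CC]$ denote the ideal generated by the $\xx$-coefficients of the polynomials in $\cN(J,\mathcal{I})_s$ for $s\leq m+1$. The containment $\mathfrak{I}'_J\subseteq\mathfrak{I}_J$ is immediate from Notation \ref{notazioni}, so the task is to prove the reverse inclusion. The strategy is to combine Theorem \ref{th:rifatto} with the representability statement of Theorem \ref{th:mfRepresentable}.

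Set $R:=\ZZ[\CC]/\mathfrak{I}'_J$ and let $\pi\colon \ZZ[\CC]\to R$ be the canonical projection. Pushing the universal marked set $\mathcal{F}_J$ forward along $\pi$ produces a $J$-marked set $F'_J\subset R[\xx]$ whose associated homomorphism $\phi_{F'_J}$ from Notation \ref{notazioni} is precisely $\pi$ itself. The decisive observation is that the procedure outlined in the proof of Theorem \ref{th:markedSetChar}(\ref{it:markedSetChar_iv}), which generates $\cN(J,I)_s$ as an $A$-module by reducing each element of $\textit{SF}_J^{(s)}$ through the substitutions of monomials $x^\beta\in J_s$ by the tails $\T(\widetilde{f}_\beta)$, is purely polynomial in the coefficients. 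Therefore it commutes with applying $\phi_{F_J}[\xx]$, and the $\xx$-coefficients of $\cN(J,(F'_J))_s$ are exactly the $\pi$-images of the $\xx$-coefficients of $\cN(J,\mathcal{I})_s$.

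By the very definition of $\mathfrak{I}'_J$, those images vanish for every $s\leq m+1$, hence $\cN(J,(F'_J))_s=0$ in that range. Theorem \ref{th:rifatto} then guarantees that $F'_J$ is a $J$-marked basis over $R$, and Theorem \ref{th:mfRepresentable} forces $\phi_{F'_J}=\pi$ to factor through $\ZZ[\CC]/\mathfrak{I}_J$. Equivalently $\mathfrak{I}_J\subseteq\ker\pi=\mathfrak{I}'_J$, which yields the desired equality.

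The only subtle step I anticipate is the compatibility of the reduction procedure with base change. Although $I\cap {}_A\langle\cN(J)\rangle$ is not generally functorial, the proof of Theorem \ref{th:markedSetChar}(\ref{it:markedSetChar_iv}) furnishes an explicit $\ZZ[\CC]$-generating set for $\cN(J,\mathcal{I})_s$ whose image under $\pi$ generates $\cN(J,(F'_J))_s$, and this is precisely what the argument requires.
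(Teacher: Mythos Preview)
Your proposal is correct and follows essentially the same route as the paper: define $\mathfrak{I}'_J$, pass to the quotient $R=\ZZ[\CC]/\mathfrak{I}'_J$, observe that the image of $\mathcal{F}_J$ is a $J$-marked basis there by Theorem~\ref{th:rifatto}, and conclude via the factorization criterion of Theorem~\ref{th:mfRepresentable}. Your explicit discussion of why the reduction procedure from Theorem~\ref{th:markedSetChar}\emph{(\ref{it:markedSetChar_iv})} commutes with base change is more detailed than the paper's terse invocation of Theorem~\ref{th:rifatto}, but the underlying argument is the same.
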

\begin{proof}
 Let $\mathfrak{I'}$ the ideal in $\ZZ[\CC]$  generated by $\xx$-coefficients 
of the polynomials in $\cN(J,\mathcal{I})_s$ for every $s\leq m+1$. Obviously $\mathfrak{I'} \subseteq \mathfrak{I}_J$.

On the other hand, by Theorem \ref{th:rifatto}, the image of $\mathcal{F}_J$ in $\ZZ[\CC]/\mathfrak{I'}\otimes_\ZZ \ZZ[\xx]$ is a $J$-marked
basis. Therefore the map $\ZZ[\CC]\rightarrow \ZZ[\CC]/\mathfrak{I'}$ factors through  $\ZZ[\CC]/ \mathfrak{I}_J$.
\end{proof}

We can obtain a set of generators of $\mathfrak{I}_J$ by computing  a set of generators of the  $\ZZ$-module  $\cN (J,\mathcal{F}_J)_s$ for each  $s\leq m+1$ through a Gaussian reduction. This is the method applied, for instance, in \cite{CioffiRoggero}.

A more efficient method is the one developed in \cite{BCLR}, which is similar to the  Buchberger algorithm for  Gr\"obner bases. We will know describe this 
method and then prove that it gives in fact a set of generators of   $\mathfrak{I}_J$, as claimed in  \cite{BCLR}.

\begin{definition}\label{riduzione} 
Let $J$ be a strongly stable ideal and let $F_J$ be a marked set. We say that a polynomial $g$ is a \emph{$J$-remainder} if $\Supp(g) \cap J = \emptyset$.
Given two polynomials $h$ and $g$, we say that $g$ can be obtained from $h$ by a step of \emph{$F_J$-reduction} if $g = h - c\, f $ where $c$ is the coefficient in $h$ of a monomial $x^\eta \in J_s$ and  $f $ is the unique polynomial in $ F_J^{(s)}$ with   $\Ht(f) = x^\eta$.   We write 
\[
 h  \xrightarrow{F_J^{(\cdot)}} g
\]
if $g$ arises from $h$  by a finite sequence of reductions as described above. Moreover, we write $h \xrightarrow{F_J^{(\cdot)}}_\ast g$ if $g$ is a $J$-remainder.
\end{definition}

As proved in \cite{CioffiRoggero,BCLR}, the procedure $\xrightarrow{F_J^{(\cdot)}}$ is noetherian, i.e.~every sequence of $F_J$-reductions starting on a polynomial $h$ stops after
a finite number of steps giving   a $J$-remainder polynomial $g$. Indeed, each step of reduction $h \mapsto h - c f$ replaces a monomial $x^\eta=x^\alpha \ast_J x^\gamma$
in the support of $h$ 
with $x^\gamma T(f_\alpha)$,   $f_\alpha \in F_J$. Since $T(f_\alpha) \in \langle \cN(J)\rangle$,  every monomial appearing in $x^\gamma T(f_\alpha)$ either is in $\cN(J)$ 
or has the decomposition  $x^\gamma x^\beta=x^{\alpha'}\ast_J x^\delta$ with $x^\delta <_{\Lex}x^\gamma $. This allows to conclude since  $<_{\Lex}$ is a well-ordering 
on monomials. 

We can find many different sequences of steps of 
reduction starting from a given  polynomial $h$, but the $J$-remainder polynomial $g$ is unique.
In fact, if $h$ is a homogeneous polynomial of degree $s$ and $h \xrightarrow{F_J^{(\cdot)}}_\ast g$ and  $h \xrightarrow{F_J^{(\cdot)}}_\ast g'$, then $g-g'=(g-h)-(g'-h)
\in \langle F_J^{(s)} \rangle$ since $g-h,g'-h \in \langle F_J^{(s)} \rangle$. By definition of $J$-remainder,  $g-g' \in \langle\cN(J)_s\rangle$ and $\langle\cN(J)_s\rangle \cap \langle F_J^{(s)}
\rangle = \{0\}$ by Theorem \ref{th:markedSetChar}\emph{(\ref{it:markedSetChar_ii})}.

\begin{remark}
In general, the marking cannot be performed w.r.t.~a term ordering (see \cite[Example 3.18]{CioffiRoggero}), so that the noetherianity of the procedure is surprising. Indeed, it is well-known that a general reduction process by a set of marked polynomials is noetherian if, and only if, the marking is performed w.r.t.~a term ordering  (see \cite{ReevesSturmfels}). The ultimate reason for this is our restriction that each monomial $x^\eta \in J$ is reduced by the unique polynomial $x^\gamma f_\alpha \in F_J^{(s)}$ such that $x^\eta = x^\alpha \ast_J x^\gamma$ --- as opposed to \emph{any} polynomial $x^\delta f_\beta \in (F_J)$ such that $x^\delta x^\beta = x^\eta$.
\end{remark}

%
%

Now we can give a characterization of $J$-marked basis in terms of this reduction procedure and $S$-polynomials.
\begin{definition}
For marked polynomials $f_\alpha,f_\beta$ in a $J$-marked set, we call $S(f_\alpha,f_\beta):= x^\eta f_\alpha - x^\nu f_\beta$ the \emph{$S$-polynomial} of
$f_\alpha$ and $f_\beta$, where $(x^\eta,-x^\nu)$ is the minimal  syzygy between $x^\alpha$ and $x^\beta$. We call \emph{$\textsc{EK}$-polynomial},  and denote by 
$S^{\EK}(f_\alpha,f_\beta)$, a $S$-polynomial whose syzygy $(x^\eta,-x^\nu)$ is of Eliahou-Kervaire type, i.e.~$x^\eta$ is a single variable $x_j$ greater 
that $\min(x^\alpha)$ and $x_j x^\alpha = x^\beta\ast_J x^\nu$ (see \cite{EliahouKervaire}).\end{definition}

 Notice that for every $\textsc{EK}$-polynomial  $ x_j f_\alpha-x^\nu f_\beta \in A[\xx]_s$,  we have $x_j f_\alpha\in \widehat{F}_J^{(s)}$ and $x^\nu f_\beta\in F^{(s)}_J$.

\begin{theorem}\label{th:markedBasisCharacterization}
Let $J$ be a strongly stable ideal and let $F_J$ be a $J$-marked set. TFAE:
\begin{enumerate}[(i)]
\item\label{it:markedBasisCharacterization_i} $F_J$ is a $J$-marked basis;
\item\label{it:markedBasisCharacterization_iii} $S^{\EK}(f_\alpha,f_\beta) \xrightarrow{F_J^{(\cdot)}}_\ast 0$, for all $S^{\EK}(f_\alpha,f_\beta)$ with
$f_\alpha,f_\beta \in F_J$;
\item\label{it:markedBasisCharacterization_ii} $x_i f_\alpha \xrightarrow{F^{(\cdot)}_J}_\ast 0$, for all $f_\alpha \in F_J$ and $x_i > \min x^\alpha$.
\end{enumerate}
\end{theorem}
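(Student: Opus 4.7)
The plan is to prove the cycle $(i)\Rightarrow(iii)\Rightarrow(ii)\Rightarrow(i)$, where the first two arrows are immediate from the material already established and the last one carries the substance of the theorem. For $(i)\Rightarrow(iii)$: if $F_J$ is a $J$-marked basis then $\cN(J,I)=0$ by Theorem~\ref{th:markedSetChar}\emph{(\ref{it:markedSetChar_viii})}, so for any $\EK$-polynomial $S^{\EK}(f_\alpha,f_\beta)\in I$ its unique $J$-remainder lies in $I\cap\langle\cN(J)\rangle=0$. For $(iii)\Rightarrow(ii)$: if $x_i>\min x^\alpha$ then $x_ix^\alpha\in J\setminus\B_J$, and Proposition~\ref{prop:borelProducts}\emph{(\ref{it:borelProducts_i})} produces a unique decomposition $x_ix^\alpha=x^\beta\ast_J x^\nu$ with $x^\beta\in\B_J$ and $\max x^\nu\le\min x^\beta$; hence $x^\nu f_\beta\in F_J^{(|\alpha|+1)}$, and the identity $x_i f_\alpha-x^\nu f_\beta=S^{\EK}(f_\alpha,f_\beta)$ shows that the first step of $F_J$-reduction of $x_if_\alpha$ at the monomial $x_ix^\alpha$ yields exactly $S^{\EK}(f_\alpha,f_\beta)$. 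Concatenating with the reduction $S^{\EK}(f_\alpha,f_\beta)\xrightarrow{F_J^{(\cdot)}}_\ast 0$ given by (iii) produces $x_if_\alpha\xrightarrow{F_J^{(\cdot)}}_\ast 0$.

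For $(ii)\Rightarrow(i)$, by the equivalence $(\ref{it:markedSetChar_v})\Leftrightarrow(\ref{it:markedSetChar_vi})$ in Theorem~\ref{th:markedSetChar} it suffices to show $I_s=\langle F_J^{(s)}\rangle$ for every $s$, the reverse inclusion being automatic. Argue by contradiction: let $s$ be minimal with $I_s\not\subseteq\langle F_J^{(s)}\rangle$, so $I_t=\langle F_J^{(t)}\rangle$ for all $t<s$ and hence $I_s=\sum_j x_j\langle F_J^{(s-1)}\rangle$. Some product $x_jx^\delta f_\alpha$ with $x^\delta f_\alpha\in F_J^{(s-1)}$ must escape $\langle F_J^{(s)}\rangle$; among such bad products, pick one with $x_j$ of minimal index. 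Since $\max x^\delta\le\min x^\alpha$ by definition of $F_J^{(s-1)}$, having $x_j\le\min x^\alpha$ would force $x_jx^\delta f_\alpha\in F_J^{(s)}$; hence $x_j>\min x^\alpha\ge\max x^\delta$. If $x^\delta=1$, hypothesis (ii) gives $x_jf_\alpha\xrightarrow{F_J^{(\cdot)}}_\ast 0$, which expresses $x_jf_\alpha$ as an $A$-linear combination of elements of $F_J^{(s)}$, contradicting the choice. Otherwise, setting $x_k=\max x^\delta<x_j$ and $x^{\delta'}=x^\delta/x_k$, the inductive hypothesis gives $x_jx^{\delta'}f_\alpha=\sum_l c_l x^{\eta_l}f_{\mu_l}$ with $x^{\eta_l}f_{\mu_l}\in F_J^{(s-1)}$; multiplying by $x_k$ yields $x_jx^\delta f_\alpha=\sum_l c_l x_k x^{\eta_l}f_{\mu_l}$, and since $x_k<x_j$ the minimality of $x_j$ places each summand $x_k x^{\eta_l}f_{\mu_l}$ in $\langle F_J^{(s)}\rangle$, the sought contradiction.

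The main obstacle is exactly the last implication. Hypothesis (ii) only controls single-variable multiplications of individual $f_\alpha$, while the desired conclusion demands control of arbitrary multivariate products $x^\delta f_\alpha$. The enabling trick is the commutation $x_jx^\delta=x_k(x_jx^{\delta'})$ with $x_k=\max x^\delta$: although $|\delta|$ does not decrease, the "outer" variable strictly drops from $x_j$ to $x_k$, which together with the minimality of $x_j$ in the counterexample immediately forces the contradiction. The inductive hypothesis in degree $s-1$ is indispensable in order to first rewrite $x_jx^{\delta'}f_\alpha$ as a combination of elements of $F_J^{(s-1)}$ before the decisive multiplication by $x_k$.
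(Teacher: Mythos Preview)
Your argument is correct. The easy implications and the step identifying the first reduction of $x_if_\alpha$ with the corresponding $\EK$-polynomial match the paper exactly. For the substantive implication (from the condition $x_if_\alpha\xrightarrow{F_J^{(\cdot)}}_\ast 0$ back to $F_J$ being a marked basis) you take a genuinely different route: the paper invokes the Eliahou--Kervaire result that the $\EK$-syzygies generate all syzygies of $J$, deduces that every element of $\textit{SF}_J^{(s)}$ reduces to $0$, and then appeals to Theorem~\ref{th:markedSetChar}\emph{(\ref{it:markedSetChar_vii})}. You instead run a self-contained minimal-counterexample induction on the degree $s$ and, within that, on the index of the multiplying variable; this is precisely the mechanism of the proof of Theorem~\ref{th:rifatto}, and the paper's own remark after the theorem acknowledges that such a direct argument is available. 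Your approach avoids the external input about $\EK$-syzygies; the paper's approach is shorter once that input is granted and makes the parallel with the Buchberger syzygy criterion explicit.

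Two small points. First, the equality $I_s=\sum_j x_j\langle F_J^{(s-1)}\rangle$ is not literally correct when $\B_J$ has generators in degree $s$; the correct statement is $I_s=\langle f_\alpha:\vert\alpha\vert=s\rangle+\sum_j x_j\langle F_J^{(s-1)}\rangle$, but since those $f_\alpha$ already lie in $F_J^{(s)}$ your conclusion that some $x_jx^\delta f_\alpha$ escapes $\langle F_J^{(s)}\rangle$ is unaffected. Second, your numbering of (ii) and (iii) is swapped relative to the displayed enumeration in the statement; the content is unambiguous, but you should align the labels.
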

\begin{proof}
\text{\emph{(\ref{it:markedBasisCharacterization_i})}}$\Leftrightarrow$\text{\emph{(\ref{it:markedBasisCharacterization_iii})}} The $\text{EK}$-syzygies are
a basis of the syzygies of $J$, so that $\emph{(\ref{it:markedBasisCharacterization_iii})}$ ensures that every other syzygy between two monomials $x^\alpha,x^\beta 
\in \mathcal{B}_J$ lifted to the corresponding marked polynomials $f_\alpha$ and $f_\beta$ has $J$-remainder equal to $0$. Since these syzygies are exactly the generators
of the module $\textit{SF}_J^{(s)}$ for all $s$, $\emph{(\ref{it:markedBasisCharacterization_iii})}$ is equivalent to $(F_J)_s = \langle F^{(s)}_J\rangle$ for every $s$,   hence to  
 $\text{\emph{(\ref{it:markedBasisCharacterization_i})}}$ by  Theorem \ref{th:markedSetChar}.

\smallskip

\text{\emph{(\ref{it:markedBasisCharacterization_ii})}}$\Leftrightarrow$\text{\emph{(\ref{it:markedBasisCharacterization_iii})}} The two reductions agree.
\end{proof}

\begin{remark}
Notice that it is possible to prove the equivalence between Theorem \ref{th:markedBasisCharacterization}\emph{(\ref{it:markedBasisCharacterization_i})} and 
 \emph{(\ref{it:markedBasisCharacterization_ii})} directly from the properties of the reduction $\xrightarrow{F^{(\cdot)}_J}_\ast$,
as was done in \cite{BCLR}. 
\end{remark}

We now show how  a set of generators of  $\mathfrak{I}_J$ can be computed using Theorem \ref{th:markedBasisCharacterization}.
We consider the $J$-marked set ${\mathcal{F}}_{J}$ given in (\ref{eq:MarcataParametri}) and use the marked sets $\widetilde{\mathcal{F}}_{J}^{(s)}$ in order to perform the
polynomial reduction in each degree $s$. 
The elements of 
$\widetilde{\mathcal{F}}_{J}^{(s)}$ take the shape
\[
\widetilde{f}_{\gamma} = x^\gamma - \sum_{x^\delta \in \cN(J)_s} D_{\gamma\delta}\, x^\delta,\qquad \forall\ x^\gamma \in J_s
\]
with  coefficients $D_{\gamma\delta}\in \ZZ[\CC]$. 

Let $S^{\EK}(f_{\alpha},f_{\alpha'})$ be an $\textnormal{EK}$-polynomial with $f_{\alpha},f_{\alpha'} \in \mathcal F_J$. Assume that $\deg S^{\EK}(f_{\alpha},f_{\alpha'}) = s$.
We can decompose it as
\[
S^{\EK}(f_{\alpha},f_{\alpha'}) = x^\eta f_{\alpha} - x^{\eta'}f_{\alpha'} = \sum_{x^\gamma \in J_s} E_{\alpha\alpha'\gamma}\, x^\gamma + \sum_{x^\delta \in \cN(J)_s}
E_{\alpha\alpha'\delta}\, x^\delta
\]
where the coefficients  are equal to
\[
E_{\alpha\alpha'\gamma} \text{ (resp.~$E_{\alpha\alpha'\delta}$)} = \begin{cases}
0, & \text{if } x^\gamma \text{ (resp.~$x^\delta$)} \notin \Supp\big(S^{\EK}(f_{\alpha},f_{\alpha'})\big), \\
C_{\alpha'\beta'} - C_{\alpha\beta}, & \text{if } x^\gamma \text{ (resp.~$x^\delta$)} \in \Supp (x^\eta f_{\alpha}) \cap \Supp (x^{\eta'} f_{\alpha'}) 
\setminus \{x^\eta x^\alpha\},\\
-C_{\alpha\beta},& \text{if } x^\gamma \text{ (resp.~$x^\delta$)} \left\{\begin{array}{l} \in \Supp (x^{\eta} f_{\alpha}) \setminus \{x^\eta x^\alpha\}\\ 
\notin \Supp (x^{\eta'} f_{\alpha'}) \setminus \{x^\eta x^\alpha\}\end{array}\right.,\\
C_{\alpha'\beta'},& \text{if } x^\gamma \text{ (resp.~$x^\delta$)} \left\{\begin{array}{l} \notin \Supp (x^{\eta} f_{\alpha}) \setminus \{x^\eta x^\alpha\}\\ 
\in \Supp (x^{\eta'} f_{\alpha'}) \setminus \{x^\eta x^\alpha\}\end{array}\right..
\end{cases}
\]

The $\mathcal F_J$-reduction of $S^{\EK}(f_{\alpha},f_{\alpha'})$ is
\[
S^{\EK}(f_{\alpha},f_{\alpha'}) \xrightarrow{\mathcal F_J^{(\cdot)}}_\ast \sum_{x^\delta \in \cN(J)_s} \left( E_{\alpha\alpha'\delta} + \sum_{x^\gamma \in J_s}
E_{\alpha\alpha'\gamma} D_{\gamma\delta} \right) x^\delta.
\]

For any $\alpha,\alpha'$ such that $x^\alpha,x^{\alpha'} \in \B_J$ are involved in a syzygy of Eliahou-Kervaire type, we set
\begin{equation}\label{eq:genIdealRep}
P_{\alpha\alpha'}^\delta: = E_{\alpha\alpha'\delta} + \sum_{x^\gamma \in J_s} E_{\alpha\alpha'\gamma} D_{\gamma\delta}, \qquad s = 
\deg S^{\EK}(f_\alpha,f_{\alpha'}),\quad \forall\ x^\delta \in \cN(J)_{s}.
\end{equation}

\begin{corollary}\label{cor:mfRepresentable}
Let $J$ be a strongly stable ideal and let $\mathfrak{I}_J$  be  the ideal in $\ZZ[\CC]$ given in Theorem \ref{th:markedBasisCharacterization}. Then $\mathfrak I_J$ is  
 the ideal generated by all polynomials $P_{\alpha\alpha'}^\delta$ described in \eqref{eq:genIdealRep}. 
\end{corollary}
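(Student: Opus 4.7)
The plan is to prove the equality $\mathfrak{I}_J = \mathfrak{I}'$, where $\mathfrak{I}' := (P_{\alpha\alpha'}^\delta)$ is the ideal generated by the polynomials in \eqref{eq:genIdealRep}. I would establish the two inclusions separately.

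For $\mathfrak{I}' \subseteq \mathfrak{I}_J$, observe that by construction $r_{\alpha\alpha'} := \sum_{x^\delta \in \cN(J)_s} P_{\alpha\alpha'}^\delta\, x^\delta$ is exactly the $J$-remainder of $S^{\EK}(f_\alpha,f_{\alpha'})$ produced by the reduction $\xrightarrow{\mathcal{F}_J^{(\cdot)}}_\ast$ carried out inside $\ZZ[\CC][\xx]$. Each reduction step subtracts an element of $\langle \mathcal{F}_J^{(s)}\rangle \subseteq \mathcal{I}$ from a polynomial already in $\mathcal{I}$, hence $r_{\alpha\alpha'} \in \mathcal{I}$; and being a $J$-remainder, its support lies in $\cN(J)$, so $r_{\alpha\alpha'} \in \cN(J,\mathcal{I})$. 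By the very definition in Notation \ref{notazioni}, its $\xx$-coefficients $P_{\alpha\alpha'}^\delta$ belong to $\mathfrak{I}_J$.

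For the reverse inclusion $\mathfrak{I}_J \subseteq \mathfrak{I}'$, I would set $B := \ZZ[\CC]/\mathfrak{I}'$, let $\overline{\mathcal{F}}_J$ denote the image of $\mathcal{F}_J$ in $B[\xx]$ (still a $J$-marked set), and show that $\overline{\mathcal{F}}_J$ is a $J$-marked basis. Applying the quotient map $\pi\colon\ZZ[\CC][\xx] \to B[\xx]$ to the reduction identity $S^{\EK}(f_\alpha,f_{\alpha'}) = g_{\alpha\alpha'} + r_{\alpha\alpha'}$ (with $g_{\alpha\alpha'} \in \langle \mathcal{F}_J^{(s)}\rangle$) yields
\[
S^{\EK}(\overline{f}_\alpha,\overline{f}_{\alpha'}) \;=\; \pi(g_{\alpha\alpha'}) + \pi(r_{\alpha\alpha'}) \;=\; \pi(g_{\alpha\alpha'}) \;\in\; \langle \overline{\mathcal{F}}_J^{(s)}\rangle,
\]
because $\pi(r_{\alpha\alpha'}) = 0$ in $B[\xx]$. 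By the uniqueness of the decomposition $B[\xx]_s = \langle \overline{\mathcal{F}}_J^{(s)}\rangle \oplus \langle \cN(J)_s\rangle$ from Theorem \ref{th:markedSetChar}\emph{(\ref{it:markedSetChar_ii})}, this shows that each $\textsc{EK}$-polynomial admits $J$-remainder $0$ over $B$; Theorem \ref{th:markedBasisCharacterization} then yields that $\overline{\mathcal{F}}_J$ is a $J$-marked basis, and Theorem \ref{th:mfRepresentable} forces the canonical map $\ZZ[\CC] \to B$ to factor through $\ZZ[\CC]/\mathfrak{I}_J$, i.e.\ $\mathfrak{I}_J \subseteq \mathfrak{I}'$. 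The only technical point worth spelling out is the compatibility of the formal reduction procedure with the scalar extension $\pi$, but this is clear since each elementary step $h \mapsto h - c\, x^\gamma \widetilde{f}_\alpha$ depends only on data that commutes with any ring homomorphism on the coefficients; no deeper obstacle arises, as everything else reduces to direct applications of results already established.
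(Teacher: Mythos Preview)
Your proof is correct and follows essentially the same strategy as the paper: both establish the two inclusions separately, obtaining $\mathfrak{I}' \subseteq \mathfrak{I}_J$ from the fact that the $P_{\alpha\alpha'}^\delta$ are $\xx$-coefficients of elements in $\cN(J,\mathcal{I})$, and the reverse inclusion by checking that the image of $\mathcal{F}_J$ in $\ZZ[\CC]/\mathfrak{I}'$ satisfies the $\textsc{EK}$-syzygy criterion of Theorem~\ref{th:markedBasisCharacterization}, then invoking Theorem~\ref{th:mfRepresentable}. Your write-up is somewhat more detailed about the compatibility of reduction with scalar extension, but the underlying argument is the same.
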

\begin{proof} Let $\mathfrak I'$ be the ideal generated by such  polynomials $P_{\alpha\alpha'}^\delta$. The inclusion  $\mathfrak I'\subseteq \mathfrak I_J$ follows directly from the 
construction, since the above polynomials  are $\xx$-coefficients of elements in 
$\cN(J,(\mathcal F_J) )$.

For the opposite inclusion  we consider again the  $J$-marked set  $\overline{\mathcal F}_J$  image of $\mathcal F_J$ in $\ZZ[\CC]/\mathfrak I'$. 
By construction   $\overline{\mathcal F}_J$  satisfy the condition $(\ref{it:markedBasisCharacterization_ii})$ of Theorem  \ref{th:markedBasisCharacterization}, so that it
is a $J$-marked basis. 
Therefore   $\ZZ[\CC] \rightarrow \ZZ[\CC]/\mathfrak I'$ factors as  $\ZZ[\CC] \rightarrow\ZZ[\CC]/\mathfrak I_J \rightarrow  \ZZ[\CC]/\mathfrak I'$ and
$\mathfrak I_J \subseteq   \mathfrak I'$. 
\end{proof}

To determine  equations defining  $\MFScheme{J}$ we can use Corollary \ref{cor:mfRepresentable}, namely  the criterion for marked bases in terms 
of  syzygies  given in Theorem  \ref{th:markedBasisCharacterization} that was first introduced in \cite{CioffiRoggero}   and refined in \cite{BCLR} in terms of EK-syzygies.  In particular,   Theorem  \ref{th:markedBasisCharacterization} and  Corollary \ref{cor:mfRepresentable}  
give new proofs  in terms of marked functors of  \cite[Corollary 4.6]{BCLR} and  \cite[Theorem 4.1]{CioffiRoggero}.

\begin{example}\label{ex:mainMF}
Let us compute the equations defining the scheme representing the functor $\MFFunctor{J}$ with $J = (x_2^2,x_2x_1,x_1^3) \subseteq	\ZZ[x_0,x_1,x_2]$. 
We start considering the marked set
\small
\[
\begin{split}
 f_{\text{\tiny 002}} & {} = x_2^{2}+	C_{\text{\tiny 002,020}} x_1^{2}+C_{\text{\tiny 002,101}} x_2 x_0+C_{\text{\tiny 002,110}} x_1 x_0+C_{\text{\tiny 002,200}} x_0^{2}\\
 f_{\text{\tiny 011}} &{}= x_2 x_1+C_{\text{\tiny 011,020}} x_1^{2}+C_{\text{\tiny 011,101}} x_2 x_0+C_{\text{\tiny 011,110}} x_1 x_0+C_{\text{\tiny 011,200}} x_0^{2}\\
 f_{\text{\tiny 030}} & = x_1^{3}+C_{\text{\tiny 030,120}} x_1^{2} x_0+C_{\text{\tiny 030,201}} x_2 x_0^{2}+C_{\text{\tiny 030,210}} x_1 x_0^{2}+C_{\text{\tiny 030,300}} x_0^{3}. 
\end{split}
\]
\normalsize
There are two $\textsc{EK}$-polynomials:
\small
\[
\begin{split}
 S^{\textsc{EK}} (f_{\text{\tiny 011}},f_{\text{\tiny 002}}) ={}& x_2 f_{\text{\tiny 011}} - x_1 f_{\text{\tiny 002}} = C_{\text{\tiny 011,020}} x_2 x_1^{2}-C_{\text{\tiny 
 002,020}} x_1^{3}+C_{\text{\tiny 011,101}} x_2^{2} x_0+{}\\
 &(-C_{\text{\tiny 002,101}}+C_{\text{\tiny 011,110}}) x_2 x_1 x_0 - C_{\text{\tiny 002,110}} x_1^{2} x_0+C_{\text{\tiny 011,200}} x_2 x_0^{2}-C_{\text{\tiny 002,200}}
 x_1 x_0^{2},\\
S^{\textsc{EK}} (f_{\text{\tiny 030}},f_{\text{\tiny 011}}) ={}& x_2 f_{\text{\tiny 030}} - x_1^2 f_{\text{\tiny 011}} =-C_{\text{\tiny 011,020}} x_1^{4}+
(-C_{\text{\tiny 011,101}}+C_{\text{\tiny 030,120}}) x_2 x_1^{2} x_0-C_{\text{\tiny 011,110}} x_1^{3} x_0\\ &{}+C_{\text{\tiny 030,201}} x_2^{2} 
x_0^{2}+C_{\text{\tiny 030,210}} x_2 x_1 x_0^{2}-C_{\text{\tiny 011,200}} x_1^{2} x_0^{2}+C_{\text{\tiny 030,300}} x_2 x_0^{3}.\\
\end{split}
\]
\normalsize
Since $\Supp\big(S^{\textsc{EK}} (f_{\text{\tiny 011}},f_{\text{\tiny 002}})\big) \cap J = \{x_2x_1^2,x_1^3,x_2^2x_0,x_2x_1x_0\}$ and $\Supp\big(S^{\textsc{EK}} 
(f_{\text{\tiny 030}},f_{\text{\tiny 011}})\big) \cap J = \{x_1^4,$ $x_2x_1^2x_0,x_1^3x_0,x_2^2x_0^2,x_2x_1x_0^2\}$, to perform the $\xrightarrow{\mathcal F_J^{(\cdot)}}_\ast$ 
reduction, we need some elements of $\widetilde{\mathcal F}_J^{(3)}$ and $\widetilde{\mathcal F}_J^{(4)}$. Reducing $S^{\textsc{EK}} (f_{\text{\tiny 011}},f_{\text{\tiny 002}})$ by
\small
\[
\begin{split}
\widetilde{f}_{\text{\tiny 111}} ={}& x_0 f_{\text{\tiny 011}},\qquad\qquad \widetilde{f}_{\text{\tiny 102}} = x_0 f_{\text{\tiny 002}},\qquad\qquad 
\widetilde{f}_{\text{\tiny 030}} = f_{\text{\tiny 030}},\\ 
\widetilde{f}_{\text{\tiny 021}} ={}& x_1 f_{\text{\tiny 011}} - C_{\text{\tiny 011,020}}\widetilde{f}_{\text{\tiny 030}} - C_{\text{\tiny 011,101}}
\widetilde{f}_{\text{\tiny 111}} =  x_2 x_1^{2}+(-C_{\text{\tiny 011,020}} C_{\text{\tiny 011,101}}-C_{\text{\tiny 011,020}} C_{\text{\tiny 030,120}}+
C_{\text{\tiny 011,110}}) x_1^{2} x_0+ {}\\ & (-C_{\text{\tiny 011,101}}^{2}-C_{\text{\tiny 011,020}}
      C_{\text{\tiny 030,201}}) x_2 x_0^{2}+(-C_{\text{\tiny 011,101}} C_{\text{\tiny 011,110}}-C_{\text{\tiny 011,020}} C_{\text{\tiny 030,210}}+
      C_{\text{\tiny 011,200}}) x_1 x_0^{2}+{}\\ &(-C_{\text{\tiny 011,101}}
      C_{\text{\tiny 011,200}}-C_{\text{\tiny 011,020}} C_{\text{\tiny 030,300}}) x_0^{3}\\
\end{split}
\]
\normalsize
we obtain
\footnotesize
\[
\begin{split}
 \underbrace{\left(\begin{array}{c}C_{\text{\tiny 011,020}}^{2} C_{\text{\tiny 011,101}}+C_{\text{\tiny 011,020}}^{2} C_{\text{\tiny 030,120}}
 +C_{\text{\tiny 002,101}} C_{\text{\tiny 011,020}}-C_{\text{\tiny 002,020}} C_{\text{\tiny 011,101}}\\{}-2C_{\text{\tiny 011,020}} C_{\text{\tiny 011,110}} 
 + C_{\text{\tiny 002,020}} C_{\text{\tiny 030,120}}-C_{\text{\tiny 002,110}}\end{array}\right)}_{P_{\text{\tiny 011,002}}^{\text{\tiny 120}}}& x_1^{2} x_0\\
{}+ \underbrace{\left(\begin{array}{c}C_{\text{\tiny 011,020}} C_{\text{\tiny 011,101}}^{2}+C_{\text{\tiny 011,020}}^{2} C_{\text{\tiny 030,201}}-
C_{\text{\tiny 011,101}} C_{\text{\tiny 011,110}}\\ {}+C_{\text{\tiny 002,020}} C_{\text{\tiny 030,201}}+
C_{\text{\tiny 011,200}}\end{array}\right)}_{P_{\text{\tiny 011,002}}^{\text{\tiny 201}}}& x_2 x_0^{2}\\
{}+\underbrace{\left(\begin{array}{c}C_{\text{\tiny 011,020}} C_{\text{\tiny 011,101}} C_{\text{\tiny 011,110}}
+C_{\text{\tiny 011,020}}^{2} C_{\text{\tiny 030,210}}-C_{\text{\tiny 002,110}} C_{\text{\tiny 011,101}}
+C_{\text{\tiny 002,101}} C_{\text{\tiny 011,110}}\\{}-C_{\text{\tiny 011,110}}^{2}-C_{\text{\tiny 011,020}}
C_{\text{\tiny 011,200}}+C_{\text{\tiny 002,020}} C_{\text{\tiny 030,210}}-
C_{\text{\tiny 002,200}}\end{array}\right)}_{P_{\text{\tiny 011,002}}^{\text{\tiny 210}}}& x_1 x_0^{2}\\
{}+\underbrace{\left(\begin{array}{c}C_{\text{\tiny 011,020}}C_{\text{\tiny 011,101}} C_{\text{\tiny 011,200}}+
C_{\text{\tiny 011,020}}^{2} C_{\text{\tiny 030,300}}-C_{\text{\tiny 002,200}} C_{\text{\tiny 011,101}}\\{}+
C_{\text{\tiny 002,101}} C_{\text{\tiny 011,200}}-C_{\text{\tiny 011,110}} C_{\text{\tiny 011,200}}+C_{\text{\tiny 002,020}} 
C_{\text{\tiny 030,300}}\end{array}\right)}_{{P}_{\text{\tiny 011,002}}^{\text{\tiny 300}}}& x_0^{3}.
\end{split}
\]
\normalsize
To reduce the second $\textsc{EK}$-polynomial we need
\small
\[
\begin{split}
\widetilde{f}_{\text{\tiny 211}} ={} & x_0 \widetilde{f}_{\text{\tiny 111}} = x_0^2 f_{\text{\tiny 011}},\quad \widetilde{f}_{\text{\tiny 202}} = 
x_0 \widetilde{f}_{\text{\tiny 102}} = x_0^2 f_{\text{\tiny 002}},\quad \widetilde{f}_{\text{\tiny 130}} = x_0 f_{\text{\tiny 030}},\quad 
\widetilde{f}_{\text{\tiny 121}} = x_0 \widetilde{f}_{\text{\tiny 021}},\\
\widetilde{f}_{\text{\tiny 040}} ={} & x_1f_{\text{\tiny 030}} - C_{\text{\tiny 030,120}} \widetilde{f}_{\text{\tiny 130}} - C_{\text{\tiny 030,201}}
\widetilde{f}_{\text{\tiny 211}} = x_1^{4}+(-C_{\text{\tiny 030,120}}^{2}-C_{\text{\tiny 011,020}} C_{\text{\tiny 030,201}}+C_{\text{\tiny 030,210}}) x_1^{2} x_0^{2}+\\
&(-C_{\text{\tiny 011,101}}
      C_{\text{\tiny 030,201}}-C_{\text{\tiny 030,120}} C_{\text{\tiny 030,201}}) x_2 x_0^{3}+(-C_{\text{\tiny 011,110}} C_{\text{\tiny 030,201}}-C_{\text{\tiny 030,120}} 
      C_{\text{\tiny 030,210}}+C_{\text{\tiny 030,300}}) x_1
      x_0^{3}+\\& (-C_{\text{\tiny 011,200}} C_{\text{\tiny 030,201}}-C_{\text{\tiny 030,120}} C_{\text{\tiny 030,300}}) x_0^{4}.
\end{split}
\]
\normalsize
Thus, the reduction of $S^{\textsc{EK}} (f_{\text{\tiny 030}},f_{\text{\tiny 011}})$ is
\footnotesize
\[
\begin{split}
 (\underbrace{-C_{\text{\tiny 011,020}} C_{\text{\tiny 011,101}}^{2}-C_{\text{\tiny 011,020}}^{2} C_{\text{\tiny 030,201}}+C_{\text{\tiny 011,101}} 
 C_{\text{\tiny 011,110}}-C_{\text{\tiny 002,020}}
      C_{\text{\tiny 030,201}}-C_{\text{\tiny 011,200}}}_{P_{\text{\tiny 030,011}}^{\text{\tiny 220}}})& x_1^{2} x_0^{2}\\
{}+\underbrace{\left(\begin{array}{c}-C_{\text{\tiny 011,101}}^{3}+C_{\text{\tiny 011,101}}^{2} C_{\text{\tiny 030,120}}-2 C_{\text{\tiny 011,020}}
C_{\text{\tiny 011,101}}
      C_{\text{\tiny 030,201}}-C_{\text{\tiny 002,101}} C_{\text{\tiny 030,201}}\\{}+C_{\text{\tiny 011,110}} C_{\text{\tiny 030,201}}-C_{\text{\tiny 011,101}}
      C_{\text{\tiny 030,210}}+C_{\text{\tiny 030,300}}\end{array}\right)}_{P_{\text{\tiny 030,011}}^{\text{\tiny 301}}}& x_2
      x_0^{3}\\
{}+\underbrace{\left(\begin{array}{c}-C_{\text{\tiny 011,101}}^{2} C_{\text{\tiny 011,110}}+C_{\text{\tiny 011,101}} C_{\text{\tiny 011,110}} C_{\text{\tiny 030,120}}
-C_{\text{\tiny 011,020}} C_{\text{\tiny 011,110}}
      C_{\text{\tiny 030,201}}-C_{\text{\tiny 011,020}} C_{\text{\tiny 011,101}} C_{\text{\tiny 030,210}}\\{}+C_{\text{\tiny 011,101}} C_{\text{\tiny 011,200}}
      -C_{\text{\tiny 011,200}} C_{\text{\tiny 030,120}}-C_{\text{\tiny 002,110}}
      C_{\text{\tiny 030,201}}+C_{\text{\tiny 011,020}} C_{\text{\tiny 030,300}}\end{array}\right)}_{P_{\text{\tiny 030,011}}^{\text{\tiny 310}}}& x_1 x_0^{3}\\
{}+\underbrace{\left(\begin{array}{c}-C_{\text{\tiny 011,101}}^{2} C_{\text{\tiny 011,200}}+C_{\text{\tiny 011,101}} C_{\text{\tiny 011,200}}
      C_{\text{\tiny 030,120}}-C_{\text{\tiny 011,020}} C_{\text{\tiny 011,200}} C_{\text{\tiny 030,201}}\\{}-C_{\text{\tiny 011,020}} C_{\text{\tiny 011,101}}
      C_{\text{\tiny 030,300}}-C_{\text{\tiny 002,200}}
      C_{\text{\tiny 030,201}}-C_{\text{\tiny 011,200}} C_{\text{\tiny 030,210}}+C_{\text{\tiny 011,110}} 
      C_{\text{\tiny 030,300}}\end{array}\right)}_{P_{\text{\tiny 030,011}}^{\text{\tiny 400}}}& x_0^{4}.
\end{split}
\]
\normalsize
In order to have a $J$-marked basis, the $J$-reduction of the $\textsc{EK}$-polynomials has to be 0, so that the functor $\MFFunctor{J}$ is represented by the scheme
\[
\MFScheme{J} = \Spec \ZZ[\CC]/\left(P_{\text{\tiny 011,002}}^{\text{\tiny 120}},P_{\text{\tiny 011,002}}^{\text{\tiny 201}},P_{\text{\tiny 011,002}}^{\text{\tiny 210}}
,P_{\text{\tiny 011,002}}^{\text{\tiny 300}},P_{\text{\tiny 030,011}}^{\text{\tiny 220}},P_{\text{\tiny 030,011}}^{\text{\tiny 301}},P_{\text{\tiny 030,011}}^{\text{\tiny 310}},
P_{\text{\tiny 030,011}}^{\text{\tiny 400}}\right).
\]

Now, for any ring $A$,  each  element of $\MFFunctor{J}(A)$  is given by a  scheme morphism $\Spec A \rightarrow \MFScheme{J}$, or equivalently by a ring morphism $\ZZ[\CC] \rightarrow A$ that factors through $\ZZ[\CC] \rightarrow \ZZ[\CC]/\mathfrak I_J \rightarrow A$. For instance, for $A=\ZZ[t]$, the ring morphism $\ZZ[\CC] \rightarrow \ZZ[t]$ 
given by
\small
\[
\begin{array}{lllllll}
C_{\text{\tiny 002,020}} \mapsto 1-t && C_{\text{\tiny 002,101}} \mapsto 0&& C_{\text{\tiny 002,110}} \mapsto t^3-t^4&& C_{\text{\tiny 002,200}} \mapsto -t^2 \\
C_{\text{\tiny 011,020}} \mapsto 0&& C_{\text{\tiny 011,101}} \mapsto 0&& C_{\text{\tiny 011,110}} \mapsto t && C_{\text{\tiny 011,200}} \mapsto t^2-t\\
C_{\text{\tiny 030,120}} \mapsto t^3 && C_{\text{\tiny 030,201}} \mapsto  t && C_{\text{\tiny 030,210}} \mapsto 0 && C_{\text{\tiny 030,300}} \mapsto -t^2 \\
\end{array}
\]
\normalsize
factors through $\ZZ[\CC] \rightarrow \ZZ[\CC]/\mathfrak I_J  \rightarrow \ZZ[t]$, therefore the following is a  $J$-marked basis in $\ZZ[t][x_0,x_1,x_2]$
\small
\[\begin{array}{l}
f_{\text{\tiny 002}}=x_2^{2}+ (1-t) x_1^{2} - (t^4-t^3) x_1 x_0 - t^2\, x_0^{2}, \\ 
 f_{\text{\tiny 011}}=x_2 x_1+t\, x_1 x_0+(t^2-t) x_0^{2},\\
 f_{\text{\tiny 030}}=x_1^{3}+ t^3\, x_1^{2} x_0+ t\, x_2 x_0^{2} -t^2\, x_0^{3}. 
\end{array}
\]
\normalsize
\end{example}

\section{Marked schemes and truncation ideals}

An ideal $I \in \MFFunctor{J}(A)$ defines a quotient algebra $A[\xx]/I$ that is a free $A$-module, so that the family $\Proj A[\xx]/I \rightarrow \Spec A$ 
is flat and defines a morphism from $\Spec A$ to a suitable Hilbert scheme, by the universal property of Hilbert schemes. Thus, it is natural to study the 
relation between marked schemes and Hilbert schemes. Since Hilbert schemes parametrize flat families of subschemes of a projective space, and the same subscheme can
be defined by infinitely many different ideals, we first need to investigate the function that associates to every  ideal in $\MFFunctor{J}(A)$  the
scheme in  $\PP^n_A$ it  defines. In general, this function  can  be non-injective,  as the following example shows.

\begin{example}[{cf.~\cite[Example 3.4]{BCLR}}]
Consider the strongly stable ideal $J = (x_2,x_1^2,x_1x_0)$ in $\ZZ[x_0,x_1,x_2]$. For any ring $A$ and 
$a \in A$, consider the $J$-marked set $F_{J,a} = \{x_2 + a \, x_1,x_1^2,x_1x_0\}$. These marked sets are in fact 
 $J$-marked bases, since the unique $\textsc{EK}$-polynomial involving the first generator
\[
S^{\textsc{EK}}(x_2 + a \, x_1, x_1^2) = x_1^2 (x_2 + a \, x_1) - x_2 (x_1^2) = a x_1^3
\] 
is clearly contained in $\langle F_{J,a}^{(3)}\rangle$. Moreover, for every $a$, the  ideal $(F_{J,a})_{\geqslant 2}$ coincides with $J_{\geq 2} \otimes A$,
since $x_2^2 = x_2 (x_2 + a \, x_1) - a x_1 (x_2 + a \, x_1) + a^2 (x_1^2)$, $x_2x_1 = x_1 (x_2 + a \, x_1) - a (x_1^2)$ and $x_2x_0 = x_0 (x_2 + a \, x_1) - a (x_1x_0)$. 
Therefore, for all $a \in A$, the ideals $(F_{J,a})$ define the same scheme $\Proj A[\xx]/J$.
\end{example}


The following proposition states that non-uniqueness is a consequence of divisibility by $x_0$.
 
\begin{proposition}[{cf.~\cite[Theorem 3.3]{BCLR}}]\label{diverse}
Let $J$ be a strongly stable ideal and let $m$  be  the  minimum  degree such that $J_m\neq 0$. Assume that no monomial of degree larger than  $m$ in the monomial basis $\B_J$  
is divisible by   $x_0$ (or equivalently that  $x_0^t \cN(J)_{\geq m}\subseteq \cN(J)_{\geq m+t}$ for every $t$). Then for any   
two different $J$-marked bases  $F_J$ and $G_J$  in $A[\xx]$,  the schemes $\Proj A[\xx]/(F_J)$ and $\Proj A[\xx]/(G_J)$ are different. 
\end{proposition}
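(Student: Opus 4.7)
The plan is to argue by contrapositive: I would assume that $\Proj A[\xx]/(F_J)$ and $\Proj A[\xx]/(G_J)$ coincide as closed subschemes of $\PP^n_A$ and aim to deduce $F_J = G_J$. By the standard correspondence between closed subschemes of $\PP^n_A$ and saturated homogeneous ideals of $A[\xx]$, under this assumption the ideals $(F_J)$ and $(G_J)$ must share the same saturation with respect to the irrelevant ideal $\m = (x_0, \dots, x_n)$. Consequently, for every $x^\alpha \in \B_J$, both marked polynomials $f_\alpha$ and $g_\alpha$ lie in this common saturated ideal, and so does their difference; in particular I obtain an integer $N$ such that $x_0^N (f_\alpha - g_\alpha) \in (F_J)$.

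The key step will be to pin down precisely where $x_0^N(f_\alpha - g_\alpha)$ sits in the marked-basis decomposition $A[\xx] = (F_J) \oplus {}_A\langle \cN(J)\rangle$ guaranteed by Theorem \ref{th:markedSetChar}. Since $f_\alpha$ and $g_\alpha$ are monic marked with the same head term $x^\alpha$, the head terms cancel in the difference and $f_\alpha - g_\alpha \in {}_A\langle \cN(J)_{\vert\alpha\vert}\rangle$. Because $x^\alpha \in \B_J$ forces $\vert\alpha\vert \geq m$, I can now invoke the equivalent form of the hypothesis: $x_0^N \cdot \cN(J)_{\vert\alpha\vert} \subseteq \cN(J)_{\vert\alpha\vert + N}$. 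This shows that $x_0^N(f_\alpha - g_\alpha)$ still lies in ${}_A\langle \cN(J)\rangle$, so combining with the previous paragraph places it in $(F_J) \cap {}_A\langle \cN(J)\rangle$, which vanishes by Theorem \ref{th:markedSetChar}(\ref{it:markedSetChar_viii}) since $F_J$ is a $J$-marked basis.

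To conclude, since $x_0$ is a nonzerodivisor in $A[\xx]$, the equality $x_0^N(f_\alpha - g_\alpha) = 0$ propagates to $f_\alpha = g_\alpha$; letting $\alpha$ range over $\B_J$ delivers $F_J = G_J$, and then Proposition \ref{prop:markedSetExists} gives $(F_J) = (G_J)$ as needed. The only non-mechanical point, which I consider the conceptual heart of the argument, is recognizing why the hypothesis is decisive: it is exactly the condition that prevents powers of $x_0$ from kicking a tail (supported in $\cN(J)$ in degree $\geq m$) into the monomial ideal $J$; without it the inclusion $x_0^N(f_\alpha - g_\alpha) \in {}_A\langle \cN(J)\rangle$ would fail and the saturation argument would break down. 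The remaining ingredients — the correspondence between closed subschemes of $\PP^n_A$ and saturated homogeneous ideals over a noetherian base, together with the defining property $\cN(J,I) = 0$ of a marked basis — are either standard or already established in the paper.
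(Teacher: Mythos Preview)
Your proof is correct and follows essentially the same approach as the paper's: both argue that equality of the projective schemes forces the truncated ideals to agree in high degree (you phrase this via saturation, the paper via $(F_J)_{\geq s}=(G_J)_{\geq s}$ for $s\gg 0$), then multiply $f_\alpha-g_\alpha$ by a power of $x_0$, use the hypothesis to keep the support in $\cN(J)$, and invoke the marked-basis property $\cN(J,I)=0$ to conclude. The only cosmetic differences are your explicit appeal to $x_0$ being a nonzerodivisor and your citation of Theorem~\ref{th:markedSetChar}\emph{(\ref{it:markedSetChar_viii})} rather than \emph{(\ref{it:markedSetChar_ii})--(\ref{it:markedSetChar_vi})}; the final reference to Proposition~\ref{prop:markedSetExists} is superfluous since $F_J=G_J$ already finishes the contrapositive.
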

\begin{proof}
By hypothesis and by Proposition \ref{prop:markedSetExists}\emph{(\ref{it:markedSetExists_ii})}, there exists a monomial $x^\alpha \in \mathcal{B}_J$ such that 
the corresponding polynomials $f_\alpha \in F_J$ and $g_\alpha \in G_J$ are different. If $\Proj A[\xx]/(F_J) = \Proj A[\xx]/(G_J)$, then $(F_J)_{\geq s} = (G_J)_{\geq s}$
for a sufficiently large $s$. Therefore, for $s \gg 0$, $x_0^{s}  f_\alpha$ is contained in $(G_J)$ and $x_0^s f_\alpha - x_0^s g_\alpha = x_0^{s} \big(\T(g_\alpha) -
\T(f_\alpha)\big) \in (G_J)$. By definition, the support of $\T(g_\alpha) - \T(f_\alpha)$ is contained in $\cN(J)$.  Therefore,  also the support
of $x_0^s  \big(\T(g_\alpha) - \T(f_\alpha)\big)$ is in $\cN(J)$,   due to the hypothesis on $J$.  Finally, by Theorem \ref{th:markedSetChar}\emph{(\ref{it:markedSetChar_ii})}-\emph{(\ref{it:markedSetChar_vi})}, 
this implies  $x_0^s \big(\T(g_\alpha) - \T(f_\alpha)\big) \in (F_J)_{s+\vert \alpha \vert}  \cap \langle \cN(J) \rangle=\{0\}$, so that 
$\T(g_\alpha) =\T(f_
\alpha)$, against the assumption $f_\alpha \neq g_\alpha$.
\end{proof}


\begin{definition}
We say that $J$ is a \emph{$m$-truncation ideal} (\textit{$m$-truncation} for short)  if $J= J'_{\geq m}$ for $J'$  a saturated strongly stable ideal.
\end{definition}

 Observe that the  monomials divisible by $x_0$ in the monomial basis of a $m$-truncation ideal $J$ (if any) are of degree $m$. Therefore, by Proposition \ref{diverse}, different $m$-truncation ideals define different projective schemes. We emphasize that a priori the truncation degree $m$ can be any positive integer. We will discuss special values of $m$ later in the paper. 

We now describe the relations among  marked functors (resp.~schemes)  corresponding to different truncations of the same saturated strongly stable ideal $J$.
We will prove that for sufficiently large integers $m$, the $J_{\geq m}$-marked schemes are all isomorphic. However, 
the construction of $\MFScheme{J_{\geq m}}$ given in Theorem \ref{th:mfRepresentable} depends on $m$ since we obtain it as a closed 
subscheme of an affine
space whose dimension increases with $m$. From a computational point of view it will be convenient to choose, among isomorphic marked schemes, the one corresponding to the minimum value of $m$, while for other applications higher  values of $m$ can be  more convenient.
 
In order to compare $J_{\geq m}$-marked bases in $A[\xx]$  for different values of $m$, we refer to Proposition \ref{diverse}. By associating to a marked 
basis the scheme it defines, we will identify $I=(F_{J_{\geq m}})\in  \MFFunctor{J_{\geq m}}(A)$ and  $I'=(G_{J_{\geq m'}})\in  \MFFunctor{J_{\geq m'}}(A)$ when 
$\Proj A[\xx]/I=\Proj A[\xx]/I'$ in $\PP^n_A$, i.e.~when  $I_{\geq s}=I'_{\geq s}$, for $s\gg 0$. By Theorem \ref{th:markedSetChar}\emph{(\ref{it:markedSetChar_iii})} and Proposition \ref{prop:markedSetExists}\emph{(\ref{it:markedSetExists_ii})}, this is equivalent to $\widetilde{F}_{J_{\geq m}}^{(s)}=\widetilde{G}_{J_{\geq m'}}^{(s)}$ for $s\gg 0$.

\begin{theorem}\label{th:markedFunctorsInclusion}
Let $J$ be a saturated strongly stable ideal. 
Then, for every  $s > 0$ and for any noetherian ring  $A$, $\MFFunctor{J_{\geq s-1}}(A)
\subseteq \MFFunctor{J_{\geq s}}(A)$. More precisely,
\begin{enumerate}[(i)]
\item\label{it:markedFunctorsInclusion_i} if $J$ has no minimal generators of degree $s+1$ divisible by the variable $x_1$ or $J_{\geqslant s-1} = J_{\geqslant s}$,
then $\MFFunctor{J_{\geq s-1}} = \MFFunctor{J_{\geq s}}$;
\item\label{it:markedFunctorsInclusion_ii} otherwise, $\MFFunctor{J_{\geq s-1}}$ is a proper closed subfunctor of $\MFFunctor{J_{\geq s}}$.
\end{enumerate}
\end{theorem}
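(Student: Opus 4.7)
My plan is to construct a natural transformation $T \colon \MFFunctor{J_{\geq s-1}} \to \MFFunctor{J_{\geq s}}$ realizing the claimed inclusion (after identifying ideals via the subschemes they cut out, cf.~Proposition \ref{diverse}), prove that $T$ is always injective, and then separately analyze when it is a bijection (case (i)) versus when its image is only a closed subfunctor (case (ii)).

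\textbf{Step 1: definition and injectivity.} Set $T_A(I) := (I_{\geq s})$ for $I \in \MFFunctor{J_{\geq s-1}}(A)$. This lies in $\MFFunctor{J_{\geq s}}(A)$ because $\cN(J_{\geq s-1})_t = \cN(J_{\geq s})_t = \cN(J)_t$ for every $t \geq s$, so restricting $A[\xx] = I \oplus {}_A\langle \cN(J_{\geq s-1})\rangle$ to degrees $\geq s$ gives $A[\xx]_{\geq s} = T_A(I)_{\geq s}\oplus {}_A\langle\cN(J_{\geq s})_{\geq s}\rangle$, while in degrees $<s$ the decomposition is trivial. For injectivity, if $T_A(I_1) = T_A(I_2)$, the unique marked bases $F_1, F_2$ (Proposition \ref{prop:markedSetExists}) agree in degree $\geq s$; for $x^\alpha \in J_{s-1}$ and $f_\alpha^{(i)} \in F_i$ the marked polynomial with head $x^\alpha$, saturation of $J$ gives $x_0 \cN(J) \subseteq \cN(J)$, so each $x_0 f_\alpha^{(i)}$ is a $(J_s)$-marked polynomial with head $x_0 x^\alpha$ in $T_A(I_i)_s$. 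Theorem \ref{th:markedSetChar}\emph{(\ref{it:markedSetChar_iii})} makes such a polynomial unique in $T_A(I_i)_s$, so $x_0 f_\alpha^{(1)} = x_0 f_\alpha^{(2)}$; since $x_0$ is a non-zero divisor in $A[\xx]$, $f_\alpha^{(1)} = f_\alpha^{(2)}$, and hence $I_1 = I_2$.

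\textbf{Step 2: case (i), surjectivity.} The sub-case $J_{\geq s-1} = J_{\geq s}$ is tautological. Assume instead that no minimal generator of $J$ of degree $s+1$ contains $x_1$. Given $I' \in \MFFunctor{J_{\geq s}}(A)$ with marked basis $\widetilde G^{(s)} = \{\widetilde g_\gamma\}_{x^\gamma \in J_s}$, I set $c_{\alpha\beta}$ equal to the coefficient of $x_0 x^\beta$ in $\widetilde g_{x_0 x^\alpha}$ (which lies in $\cN(J)_s$ by saturation), define $f_\alpha := x^\alpha - \sum_\beta c_{\alpha\beta} x^\beta$, and aim to verify that $\{f_\alpha\}_{x^\alpha \in J_{s-1}}$ together with $\widetilde G^{(s)}$ forms a $J_{\geq s-1}$-marked basis with image $I'$ under $T_A$. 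The crucial combinatorial input provided by the hypothesis is that for every $x^\delta \in \cN(J)_s$ with $x_0 \nmid x^\delta$, the product $x_1 x^\delta$ also lies in $\cN(J)_{s+1}$; through this, the EK-relations already satisfied by the extension of $\widetilde G^{(s)}$ to degree $s+1$ (Theorem \ref{th:markedBasisCharacterization}) force the non-$x_0$-divisible coefficients of $\widetilde g_{x_0 x^\alpha}$ to vanish (so that $x_0 f_\alpha = \widetilde g_{x_0 x^\alpha}$) and imply the analogous compatibilities $x_i f_\alpha \in I'_s$ for all $i \geq 1$ and $x^\alpha \in J_{s-1}$.

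\textbf{Step 3: case (ii), closedness.} By representability (Theorem \ref{th:mfRepresentable}), $T$ is induced by a scheme morphism $\MFScheme{J_{\geq s-1}} \to \MFScheme{J_{\geq s}}$; closedness of the subfunctor is equivalent to surjectivity of the induced ring map $\mathcal O(\MFScheme{J_{\geq s}}) \to \mathcal O(\MFScheme{J_{\geq s-1}})$. Step 1 already delivers this: every new parameter $C'_{\alpha\beta}$ of $\MFScheme{J_{\geq s-1}}$ (for $x^\alpha \in J_{s-1}$) equals the coefficient of $x_0 x^\beta$ in the universal $\widetilde g_{x_0 x^\alpha}$ and is therefore a polynomial in the parameters $C_{\gamma\delta}$ of $\MFScheme{J_{\geq s}}$. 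The defining equations of the closed subscheme are the obstructions that are automatic in case (i) but not in general: the vanishing of the coefficients of non-$x_0$-divisible monomials in each universal $\widetilde g_{x_0 x^\alpha}$, together with the EK-compatibility equations $x_i f_\alpha \in I'$ for $i \geq 1$.

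\textbf{Main obstacle.} The central technical step is Step 2: extracting the vanishings $d^{(x_0 x^\alpha)}_\delta = 0$ for $x_0 \nmid x^\delta$ and the higher-variable compatibilities from the EK-syzygies of $\widetilde G^{(s)}$ in degree $s+1$. The argument requires a careful analysis of how EK-reductions land in $\langle \cN(J)_{s+1}\rangle$, and pinning down precisely why the hypothesis "no minimal generator of $J$ of degree $s+1$ containing $x_1$" is exactly the condition that makes $x_1 x^\delta \in \cN(J)_{s+1}$ for every $\delta$ with $x_0 \nmid x^\delta$, thereby forcing the required coefficients to vanish via the marked-basis property.
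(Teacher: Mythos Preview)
Your proposal is correct and follows essentially the same approach as the paper's proof: the inclusion via truncation, injectivity and ring-map surjectivity via the relation $x_0 f_\alpha = \widetilde g_{x_0 x^\alpha}$ (valid because $J$ is saturated), and case~(i) via the EK-syzygy $x_1 f''_{x_0 x^\alpha} - x_0 f''_{x_1 x^\alpha}$ combined with exactly the combinatorial observation you single out. The one refinement in the paper is that, rather than chasing the compatibilities $x_i f_\alpha \in I'_s$ through further EK-relations (your Step~2 obstacle), it verifies the marked-basis property of the lifted set in one stroke by noting that multiplication by $x_0$ carries $\cN\big(J_{\geq s-1},(\mathcal H)\big)$ injectively into $\cN\big(J_{\geq s},(\mathcal F'')\big)=0$.
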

\begin{proof}
To prove the inclusion $\MFFunctor{J_{\geq s-1}}(A) \subseteq \MFFunctor{J_{\geq s}}(A)$, let us consider a $J_{\geqslant s-1}$-marked basis $F$.
The set  $G := \widetilde{F}^{(s)}  \cup \{ f_{\alpha} \in F \ \vert\ x^\alpha \in \B_J \text{ and } \vert\alpha\vert >  s \}$ is by construction a
$J_{\geqslant s}$-marked set. In fact, $G$ is a $J_{\geq s}$-marked basis, since $\langle G^{(s)}\rangle = \langle F^{(s)}\rangle$ by 
Theorem \ref{th:markedSetChar}\emph{(\ref{it:markedSetChar_iii})-(\ref{it:markedSetChar_iv})} and the generators of degree larger than $s$ are the same in the two marked sets.

  From now on in this proof we denote by $J'$  the truncation of $J$ in degree $s-1$, by $\mathcal{F}_{J'}$ the marked set analogous to the one given in \eqref{eq:MarcataParametri} that we use to construct the ideal  $\mathfrak{I}_{J'} \subseteq \ZZ[\CC']$ of $\MFScheme{J'}$. We also let  
$A':=\ZZ[\CC']/\mathfrak{I}_{J'}$,  $\phi_{F_{J'}}\colon \ZZ[\CC']\rightarrow A'$ the canonical map on the quotient and $\phi_{F_{J'}}[\xx]$ the extension to $\ZZ[\CC'][\xx]\rightarrow A'[\xx]$.
 Moreover,  $J''$ will be the truncation of $J$ in degree $s$
and $\mathcal{F}_{J''}$, $\ZZ[\CC'']$, $\mathfrak{I}_{J''}$, $A''$, $\phi_{F_{J''}}$ are defined analogously. By the definition of $\mathfrak{I}_{J'}$ and $\mathfrak{I}_{J''}$, we observe that 
$\phi_{F_{J'}}[\xx](\mathcal{F}_{J'})$ is a $J'$-marked
basis in $A'[\xx]$ and $\phi_{F_{J''}}[\xx](\mathcal{F}_{J''})$ is a $J''$-marked
basis in $A''[\xx]$.

We first prove \emph{(\ref{it:markedFunctorsInclusion_ii})}. Let us consider the $J''$-marked set 
\[
\mathcal{G}:= \widetilde{\mathcal{F}}_{J'}^{(s)} \cup \left\{ f'_{\alpha} \in \mathcal{F}_{J'} \ \vert\ x^\alpha \in \B_J ,\  \vert\alpha\vert >  s\right\}.
\]
  By Theorem \ref{th:markedSetChar}\emph{(\ref{it:markedSetChar_v})-(\ref{it:markedSetChar_viii})}, $\phi_{F_{J'}}[\xx](\mathcal{G})$ is a $J''$-marked basis of 
$A'[\xx]$, since 
\[
\cN\left(J'', \big(\phi_{F_{J'}}[\xx](\mathcal G)\big)\right)\subseteq \cN\left(J', \big(\phi_{F_{J'}}[\xx](\mathcal{F}_{J'})\big)\right)=\{0\}.
\]
Thus, 
the ring homomorphism 
\[
\begin{split}
\psi \colon \ZZ[\CC''] &\longrightarrow \ZZ[\CC'] \\
\parbox{1cm}{\centering $C''_{\alpha \beta}$} &\longmapsto \text{coefficient of } x^\beta \text{ in } g_\alpha \in {\mathcal G}.
\end{split}
\]
induces a homomorphism $\overline{\psi} \colon A'' \rightarrow A'$ such that $\phi_{F_{J'}}\circ \psi=\overline{\psi} \circ \phi_{F_{J''}}$.  Moreover $\phi_{F_{J'}}\circ \psi$  
is surjective, being the composition of two surjective homomorphisms. Indeed,
\[
 C'_{\alpha\beta} = \begin{cases}
\psi(C''_{\alpha\beta}),& \text{if }  x^\alpha \in \B_J ,\  \vert\alpha\vert \geq  s, \\
\psi(C''_{\eta \gamma}),\ x^{\eta} =x_0 x^\alpha ,\   x^{\gamma} = x_0x^\beta ,& \text{otherwise}
\end{cases}.
\]
Under our assumptions, for every $f'_\alpha \in \mathcal{F}_{J'}$ of degree $s-1$, $x_0T(f'_\alpha)$ is a $J''$-remainder, so that $x_0f'_\alpha \in \mathcal{G}$.   

Therefore, the epimorphism $\overline{\psi}$ induces an  isomorphism between $\MFScheme{J'} = \Spec \ZZ[\CC']/\mathfrak{I}_{J'}$ and a closed subscheme of  $\MFScheme{J''} = \Spec \ZZ[\CC'']/\mathfrak{I}_{J''}$. In order to show that this subscheme is proper, we can look at the Zariski tangent spaces of $\MFScheme{J'}$ and $\MFScheme{J''}$ at the points corresponding to $J'$ and $J''$ and see that they have different dimension (see \cite[Theorem 5.7]{BCLR} for the details).
%

\smallskip

To prove \emph{(\ref{it:markedFunctorsInclusion_i})}  we observe that the new condition  on $J$  implies that for every $x^\gamma \in \cN(J)_s$, either $x_1x^\gamma \in \cN(J)_{s+1}$  or $x_1x^\gamma=x_0x^\delta $ with $x^\delta \in J_s$ holds. 

Exploiting this property we first prove that $C''_{\eta\gamma}\in \mathfrak{I}_{J''}$ if $x^\eta \in J_s$, $x_0\mid x^{\eta}$ and $x_0 \nmid x^{\gamma}$.  Let 
$x^{\epsilon}=x_1x^\eta/x_0$ and consider the EK-syzygy  $S^{\EK}(f''_{\eta} , f''_{\epsilon})=x_0T(f''_\epsilon) - x_1T( f''_{\eta})$ between the elements $ f''_{\eta}, f''_\epsilon \in \mathcal{F}_{J''}$.  The $J$-remainder of this polynomial given by 
$\xrightarrow{\mathcal{F}_{J''}^{(\cdot)}}$ is of the type  $g=x_0T(f''_\epsilon)-x_1T( f''_{\eta})+x_0 \sum C_{\eta \delta}'' f''_{\beta}$,
where  $f''_{\beta}\in \mathcal{F}_{J''}$  and the sum is over the multi-indices $\beta$ such that $x^{\beta}:=x_1 x^\delta /x_0 \in J_s$ with $x^{\delta}$ divisible by $x_0$ and contained in the support of $T( f''_{\eta})$ and $C_{\eta \delta}''$ the coefficient of $x^{\delta}$ in $f''_{\eta}$. If  $ x^{\gamma} $ is a term in the support of $  T( f''_{\eta})$ such that $x_0\nmid x^{\gamma}$, then  $x_1 x^{\gamma} \in \cN(J)_{s+1}$ is contained in the support of $ g$. By definition, 
$\mathfrak{I}_{J''}$ contains the $\xx$-coefficients of $g$, thus in particular the coefficient $C''_{\eta \gamma}$ of $x_1 x^{\gamma}$ in $g$.  For every $x^\alpha \in J_{s-1}$ and  $x^\eta=x_0x^\alpha$, let us denote by $h_\alpha$ the polynomial in $\ZZ[\CC''][\xx]$ such that 
$f''_\eta=x_0 h_\alpha+\sum C''_{\eta \gamma}x^\gamma$  with $x_0\nmid x^\gamma$, so that $\phi_{F_{J''}}[\xx](f''_\eta)=\phi_{F_{J''}}[\xx](x_0 h_\alpha)$.

Using these polynomials we can define the   $J'$-marked set 
\[
\mathcal H =\{h_\alpha   \ \vert \ x^\alpha\in J_{s-1} \}\cup \{  f''_\eta  \in \mathcal{F}_{J''}  \ \vert \  x^\eta  \in \B_J,\ \vert \eta \vert \geq s \}.
\]
 By construction, $\phi_{F_{J''}}[\xx](x_0 \mathcal H)\subseteq  \phi_{F_{J''}}[\xx]( \mathcal{F}_{J''})$, therefore $\phi_{F_{J''}}[\xx](\mathcal H)$ is a $J'$-marked basis    by Theorem \ref{th:markedSetChar} 
\emph{(\ref{it:markedSetChar_v})-(\ref{it:markedSetChar_viii})}. In fact, if the support of an element   $u$ in the ideal ${}_{A''} \big(\phi_{F_{J''}}[\xx](\mathcal{H})\big)$ only contains monomials of $\cN(J)$, then $x_0 u$ has the same support and is in ${}_{A''} \big(\phi_{F_{J''}}[\xx]( \mathcal{F}_{J''})\big)$, so that  
$u=0$ since $ \cN\big(J,\big(\phi_{F_{J''}}[\xx](\mathcal{F}_{J''})\big)\big)=\{0\}$.

Thus, the ring homomorphism 
\[
\begin{split}
\varphi \colon \ZZ[\CC'] &\longrightarrow \ZZ[\CC''] \\
C'_{\alpha \beta} &\longmapsto \text{coefficient of } x^\beta \text{ in }  h_\alpha  \text{ if  } \vert \alpha\vert =s-1  \\
 C'_{\eta \gamma} &\longmapsto \text{coefficient of } x^\gamma  \text{ in }  f''_\eta  \text{ if  } x^\eta \in \B_J, \ \vert \eta \vert \geq s
\end{split}
\]
induces a homomorphism $\overline{\varphi} \colon A' \rightarrow A''$.

Finally, $\overline{\psi}$ and $\overline{\varphi}$ are inverses of each other. 
Indeed, if we apply  to the $J'$-marked set  $\mathcal{H}$ the  construction from the first part of the proof, we obtain a  $J''$-marked set $\mathcal G'$ such that $\phi_{F_{J''}}[\xx](\mathcal G')$ is a $J''$-marked basis and $\phi_{F_{J''}}[\xx](\mathcal{G}')\subseteq \phi_{F_{J''}}[\xx](\mathcal{F}_{J''})$, hence $\phi_{F_{J''}}[\xx](\mathcal{G}')= \phi_{F_{J''}}[\xx](\mathcal{F}_{J''})$ by Proposition \ref{prop:markedSetExists}.
\end{proof}


\section{Marked schemes and Hilbert schemes}

We now briefly recall how the Hilbert scheme can be constructed as subscheme of a suitable Grassmannian. 
For any positive integer $n$ and any numerical polynomial $p(t)$, consider the Hilbert functor
\begin{equation*}
\HilbFunctor{n}{p(t)}:\ \underline{\text{Noeth-Rings}}\ \longrightarrow\ \Sets
\end{equation*}
associating to any noetherian ring $A$ the set
\begin{equation*}
\HilbFunctor{n}{p(t)}(A) = \left\{ X \subseteq \PP^n_A\ \vert\ X \rightarrow \Spec A \text{ is flat and has fibers with Hilbert polymial } p(t)\right\}
\end{equation*}
and to any ring homomorphism  $f: A \rightarrow B$ the map
\[
\begin{split}
\HilbFunctor{n}{p(t)}(f):\  \HilbFunctor{n}{p(t)}(A)\ &\longrightarrow\ \parbox{3cm}{\centering $\HilbFunctor{n}{p(t)}(B)$}\\
 \parbox{2cm}{\centering $X$} &\longmapsto\ X \times_{\Spec A} \Spec B.
\end{split}
\]
Grothendieck first defined this functor and showed that it is representable \cite{Groth}. The Hilbert scheme $\HilbScheme{n}{p(t)}$ is defined as the scheme representing the Hilbert
functor and it is classically constructed as a subscheme of a suitable Grassmannian. Let us briefly recall how (for a detailed exposition see 
\cite{IarrobinoKleiman,HaimSturm,BLMR}).
By Gotzmann's Regularity theorem (\cite[Satz (2.9)]{Gotzmann} and \cite[Lemma C.23]{IarrobinoKleiman}), there exists a positive integer $r$ only depending on $p(t)$,
called \emph{Gotzmann number}, for which the ideal sheaf $\mathcal{I}_X$ of each scheme $X \in \HilbFunctor{n}{p(t)}(A)$ is $r$-regular (in the sense of 
Castelnuovo-Mumford regularity). This implies that the morphism
\[
H^0\big(\OO_{\PP^n_A}(r)\big)\ \xrightarrow{\phi_X}\ H^0\big(\OO_X(r)\big)
\]
is surjective. By flatness, $H^0(\OO_X(r))$ is a locally free module of rank $p(r)$ and, as an $A$-module, $H^0(\OO_{\PP^n_A}(r))$ is isomorphic to the homogeneous 
piece of degree $r$ of the polynomial ring $A[\xx]$. Since $A[\xx]_r \simeq A^N$, where $N = \binom{n+r}{n}$, the homomorphism $\phi_X$ may be viewed as an element of the Grassmannian, whose corresponding functor is
\begin{equation*}
\GrassFunctor{p(r)}{N}:\ \underline{\text{Noeth-Rings}}\ \longrightarrow\ \Sets
\end{equation*}
associating to any noetherian ring $A$ the set
\begin{equation*}
\GrassFunctor{p(r)}{N}(A) = \left\{\begin{array}{c}\text{isomorphism classes of epimorphisms } A^N \rightarrow Q\\ \text{of locally free modules of rank }
p(r)\end{array} \right\}
\end{equation*}
and to any morphism $f: A \rightarrow B$ the map
\[
\begin{split}
\GrassFunctor{p(r)}{N}(f):\  \GrassFunctor{p(r)}{N}(A)\ &\longrightarrow\ \parbox{3cm}{\centering $\GrassFunctor{p(r)}{N}(B)$}\\
 \parbox{2cm}{\centering $A^N \rightarrow Q$} &\longmapsto\ B^N \rightarrow Q \otimes_A B.
\end{split}
\]
Two epimorphisms $\phi: A^N \rightarrow Q$ and $\phi': A^N \rightarrow Q'$ are isomorphic if there exists an isomorphism $\psi:Q \rightarrow Q'$ of $A$-modules such that the diagram
\[
\begin{tikzpicture}
\node (1) at (0,0) [] {$A^N$};
\node (2) at (0,-1.25) [] {$A^N$};
\node (3) at (2,0) [] {$Q$};
\node (4) at (2,-1.25) [] {$Q'$};

\draw [->] (1) --node[above]{\tiny $\phi$} (3);
\draw [->] (2) --node[above]{\tiny $\phi'$} (4);
\draw [->] (1) --node[left]{\tiny $\text{id}$} (2);
\draw [->] (3) --node[right]{\tiny $\psi$} (4);
\end{tikzpicture}
\]
commutes. Equivalently, $\phi$ and $\phi'$ are isomorphic if $\ker \phi = \ker \phi'$. Therefore, by identifying isomorphism classes of epimorphisms $\phi$ with $\ker \phi$, the Grassmann functor sends $A$ to the set
\[
\left\{\begin{array}{c}A\text{-submodules } M \subseteq A^N \text{ such that}\\ A^N/M \text{ is locally free of rank } p(r)\end{array} \right\}.
\]

This functor is representable and the representing scheme $\GrassScheme{p(r)}{N}$ is the Grassmannian (see \cite[Section 16.7]{Vakil}).
Therefore, one of the possible embeddings of the Hilbert scheme into a Grassmannian is given by the natural transformation of functors (introduced by Bayer in \cite{Bayer82})
\begin{equation}\label{eq:BayerEmbedding}
\underline{\mathscr{H}}:\  \HilbFunctor{n}{p(t)}\ \longrightarrow\ \GrassFunctor{p(r)}{N}
\end{equation}
given by
\[
\begin{split}
  \HilbFunctor{n}{p(t)}(A)\ &\quad\longrightarrow\quad \parbox{3cm}{\centering $\GrassFunctor{p(r)}{N}(A)$}\\
 \parbox{2cm}{\centering $X$} &\quad\longmapsto\quad A[\xx]_r \twoheadrightarrow H^0\big(\OO_X(r)\big).
\end{split}
\]
By Yoneda's Lemma, any natural transformation of representable functors is induced by a 
unique morphism between their  representing schemes. The associated morphism $\mathscr{H}:\HilbScheme{n}{p(t)} \rightarrow \GrassScheme{p(r)}{N}$ is a closed embedding 
and the equations defining the Hilbert scheme $\HilbScheme{n}{p(t)}$ as a subscheme of $\GrassScheme{p(r)}{N}$ were conjectured by Bayer \cite{Bayer82} and proved much 
later by Haiman and Sturmfels \cite{HaimSturm}.

The Grassmannian has the well-known open cover by affine spaces which also defines the Pl\"ucker embedding. For any set $\mathcal{N}$ of $p(r)$ distinct monomials of
$A[\xx]_r$, consider the map
\[
\i_{\mathcal{N}}: {}_A\langle \mathcal{N}\rangle \simeq A^{p(r)} \hookrightarrow A[\xx]_r \simeq A^N
\]  
and the subfunctor $\GFunctor{\mathcal{N}}$ such that
\begin{equation*}
\GFunctor{\mathcal{N}}(A) = \left\{ \text{classes } \phi_Q: A^N \rightarrow Q \text{ in } \GrassFunctor{p(r)}{N}(A) \text{ such that } \phi_Q \circ \i_{\mathcal{N}} 
\text{ is surjective}   \right\}.
\end{equation*}
Each such subfunctor is open and the family obtained varying the set of monomials $\mathcal{N}$ covers the Grassmann functor \cite[Lemma 22.22.1]{stacks-project}.
Since $\phi_Q \circ \i_{\mathcal{N}}$ is an epimorphism between a free module and a locally free module of the same rank, it is in fact an isomorphism. Therefore, each 
$Q$ in $\GFunctor{\mathcal{N}}(A)$ can be identified with the free module ${}_A\langle \mathcal{N}\rangle$ and we can  rewrite the functors $\GFunctor{\mathcal{N}}$ as
\begin{equation*}
\GFunctor{\mathcal{N}}(A) = \left\{\text{epimorphisms } A[\xx]_r \rightarrow {}_A\langle\mathcal{N}\rangle \text{ of free modules of
rank } p(r) \right\}
\end{equation*}
An epimorphism $\phi:A[\xx]_r \rightarrow {}_A\langle \mathcal{N}\rangle$
is determined by its values on basis elements, 
\[
\phi(x^\alpha) = \sum_{x^\beta \in \mathcal{N}} a_{\alpha,\beta}x^\beta.
\]
 Thus its kernel is generated by 
\[
f_\alpha := x^\alpha - \sum_{x^\beta \in \mathcal{N}} a_{\alpha \beta}\, x^\beta
\]
for all $x^\alpha$ of total degree $r$  lying outside $\mathcal{N}$.
 If $J$ is the ideal generated by the monomials in $A[\xx]_r$ not contained in  $\mathcal{N}$, then  we can describe $\GFunctor{\mathcal{N}}$ as 
\begin{eqnarray*}
\notag \GFunctor{\mathcal{N}}(A) & = & \left\{ \text{free submodules } L \subseteq A[\xx]_r \text{ such that } A[\xx]_r \simeq L \oplus {}_A\langle\mathcal{N}\rangle \right\}
= {} \\
&=&  \left\{ \text{submodules } L \subseteq A[\xx]_r \text{ generated by a $J$-marked set} \right\}.
\end{eqnarray*}

We are interested in the open subfunctors of the Hilbert functor
$\HilbFunctor{n}{p(t)}$ induced by the family of subfunctors $\GFunctor{\mathcal{N}}$ by means of $\underline{\mathscr{H}}$. We denote by $\HFunctor{\mathcal{N}}$  the subfunctor associating to $A$ the set
\begin{equation}\label{eq:openH}
\HFunctor{\mathcal{N}}(A) := \underline{\mathscr{H}}^{-1}\left( \GFunctor{\mathcal{N}}(A)\cap \underline{\mathscr{H}}\left(\HilbFunctor{n}{p(t)}(A)\right) \right).
\end{equation}

The kernel of the map $A[\xx]_r \rightarrow H^0(\OO_X(r))$ is represented by the global sections of the sheaf $\mathcal{I}_X(r)$, i.e.~by the homogeneous piece of 
degree $r$ of the saturated ideal $I_X$ defining $X$. Since $I_X$ and $(I_X)_{\geqslant r}$ define the same scheme and $(I_X)_{\geqslant r}$ is generated by the
homogeneous piece of degree $r$, we can rewrite the subfunctor $\HFunctor{\mathcal{N}}$ as
\begin{eqnarray}
\notag\HFunctor{\mathcal{N}}(A) & = & \left\{  X \in \HilbFunctor{n}{p(t)}(A)\ \big\vert\ A[\xx]_r \simeq H^0 \big(\mathcal{I}_X(r)\big) 
\oplus {}_A\langle\mathcal{N}\rangle \right\}  \\
\label{eq:openHquotient}
& = & \left\{  X \in \HilbFunctor{n}{p(t)}(A)\ \big\vert\ (I_X)_{\geqslant r} \text{ is generated by a $J$-marked set}     \right\}.
\end{eqnarray}

It is then natural to relate $\HFunctor{\mathcal{N}}(A)$ to $\MFFunctor{J}(A)$. In general their relations are less obvious than one might expect. However, 
under suitable conditions on $\cN$ and  $J$,  we can identify $\HFunctor{\mathcal{N}}$ with a marked functor. The following result gives a new proof in terms 
of functors of \cite[Theorem 2.5]{BLR}.

\begin{lemma}
 Let $p(t)$ be a Hilbert polynomial in $\PP^n$ with Gotzmann number $r$  and  let $J$ be a strongly stable ideal such that  $\vert \cN(J)_r\vert=p(r)$.  Then, for every noetherian ring $A$
\[
\HFunctor{{\cN(J)_r}}(A)\neq \emptyset\quad \Longleftrightarrow\quad  \text{the Hilbert polynomial of } A[\xx]/J \text{ is }p(t).
\]
\end{lemma}
\begin{proof}
$(\Leftarrow)$ If the Hilbert polynomial of $A[\xx]/J $ is $p(t)$, then  $\Proj A[\xx]/J \in \HFunctor{\mathcal{N}}(A)$.

$(\Rightarrow)$ Assume that $X$ is a scheme in  $\HFunctor{\mathcal{N}}(A)$ and set $I:=(I_X)_{\geqslant r}$. 
By Theorem \ref{th:markedSetChar}\emph{(\ref{it:markedSetChar_iii})}, for  every $m\geq r$, the $A$-module $I_{m}$ 
has a free direct summand  with rank equal to that of $J_m$, therefore the value of the Hilbert polynomial of $J$ in every degree $m\geq r$ cannot be smaller than $p(m)$. 
On the other hand, this rank cannot be larger than $p(m)$ by Macaulay's Estimate on the Growth of Ideals \cite[Theorem 3.3]{Green}.
%
%
\end{proof}

\begin{corollary}\label{cor:isomorf}
Let  $J$ be a saturated  strongly stable ideal  such that $\ZZ[\xx]/J$ has  Hilbert polynomial $p(t)$. 
Then 
\[
 \HFunctor{\cN(J)_r}\simeq \MFFunctor{J_{\geq r}}.
\]
\end{corollary}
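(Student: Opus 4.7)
The plan is to construct mutually inverse natural transformations $\Phi : \HFunctor{\cN} \to \MFFunctor{J_{\geq r}}$ and $\Psi : \MFFunctor{J_{\geq r}} \to \HFunctor{\cN}$ given on $A$-points by $\Phi_A(X) := (I_X)_{\geq r}$ and $\Psi_A(I) := \Proj A[\xx]/I$. Naturality with respect to a ring morphism $A \to B$ will be automatic from the compatibility of $\Proj$ and of the truncation $(-)_{\geq r}$ with the base change $-\otimes_A B$, so the real content lies in checking that $\Phi_A$ and $\Psi_A$ land in the claimed sets and are mutually inverse.

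For $\Phi$, by the description \eqref{eq:openHquotient} of $\HFunctor{\cN}$, the ideal $I := (I_X)_{\geq r}$ is already generated by a $J_{\geq r}$-marked set $F$; what I must prove is that $F$ is in fact a $J_{\geq r}$-marked basis, i.e.~that $A[\xx]_s = I_s \oplus \langle\cN(J_{\geq r})_s\rangle$ in every degree. For $s < r$ there is nothing to check. For $s \geq r$, Theorem \ref{th:markedSetChar}\emph{(\ref{it:markedSetChar_ii})-(\ref{it:markedSetChar_iii})} provides $A[\xx]_s = \langle F^{(s)}\rangle \oplus \langle \cN(J)_s\rangle$ with $\langle F^{(s)}\rangle \subseteq I_s$, so the remaining point is to upgrade this inclusion to an equality. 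Here the crucial input is that $r$ is the Gotzmann number of $p(t)$: combined with flatness of $X$, Gotzmann regularity forces $A[\xx]_s/I_s$ to be a locally free $A$-module of rank $p(s)$ for every $s \geq r$, while $\langle \cN(J)_s\rangle$ is free of the same rank. The composite $\langle \cN(J)_s\rangle \hookrightarrow A[\xx]_s \twoheadrightarrow A[\xx]_s/I_s$ is surjective (because $\langle F^{(s)}\rangle + \langle \cN(J)_s\rangle$ already exhausts $A[\xx]_s$), hence a surjection between locally free modules of the same finite rank; checking at every localization, it must be an isomorphism, which forces $I_s \cap \langle\cN(J)_s\rangle = 0$ as needed.

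For $\Psi$, starting from $I \in \MFFunctor{J_{\geq r}}(A)$ the quotient $A[\xx]/I$ is $A$-free on the basis $\cN(J_{\geq r})$, so $X := \Proj A[\xx]/I$ is flat over $\Spec A$ and its fibers have Hilbert polynomial $p(t)$ (since $\vert\cN(J)_s\vert = p(s)$ for $s \geq r$); hence $X \in \HilbFunctor{n}{p(t)}(A)$. To conclude $X \in \HFunctor{\cN}(A)$ by \eqref{eq:openHquotient}, it suffices to prove $I = (I_X)_{\geq r}$. Both ideals are generated in degree $r$ (the former by its marked basis together with $\B_{J_{\geq r}} \subseteq A[\xx]_r$, the latter by Gotzmann regularity), so it is enough to show $I_r = (I_X)_r$, and this is again the surjection-between-locally-free-modules-of-equal-rank argument applied to $A[\xx]_r/I_r \twoheadrightarrow A[\xx]_r/(I_X)_r$.

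Finally, $\Psi \circ \Phi = \mathrm{id}$ holds because $I_X$ and $(I_X)_{\geq r}$ define the same projective scheme, while $\Phi \circ \Psi = \mathrm{id}$ follows from the identification $I = (I_X)_{\geq r}$ established above. The main obstacle throughout will be the rank-matching step: Theorem \ref{th:markedSetChar} handles the submodule $\langle F^{(s)}\rangle$, but promoting the decomposition to the ambient ideal $(I_X)_s$ genuinely depends on the combination of flatness and Gotzmann regularity that produces two locally free quotients of $A[\xx]_s$ of the same rank $p(s)$ linked by a surjection.
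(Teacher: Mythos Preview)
Your argument is correct and is essentially the approach the paper intends, though the paper does not spell out a proof of this corollary at all: it is stated as an immediate consequence of the preceding Lemma together with the description \eqref{eq:openHquotient}, and the sentence following the corollary simply rephrases the conclusion as ``$(I_X)_{\geq r}$ is generated by a $J$-marked \textbf{basis}'' rather than merely a marked set. Your proof makes explicit the rank-matching step that bridges this gap, which is exactly what is implicitly required.

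The only difference worth noting is the tool used for the rank count. The paper's Lemma invokes Macaulay's estimates on the growth of ideals to pin down $\vert\cN(J)_s\vert = p(s)$ for $s\geq r$, whereas you appeal directly to Gotzmann regularity to obtain that $A[\xx]_s/(I_X)_s$ is locally free of rank $p(s)$. These are complementary facets of Gotzmann's theorem and either suffices here; under the hypothesis of the corollary (that $\ZZ[\xx]/J$ already has Hilbert polynomial $p(t)$) Macaulay's bound is in fact not needed, so your route is slightly more direct. Your careful handling of the inverse $\Psi$, in particular the verification that $I=(I_X)_{\geq r}$ via the same locally-free-of-equal-rank argument, is a point the paper glosses over by relying on Proposition~\ref{diverse}, and your treatment is cleaner.
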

We can rephrase the statement of the corollary by saying that for every noetherian ring $A$,
\begin{equation*}\label{eq:openHideal}
\HFunctor{\mathcal{N}(J)_r}(A)= \left\{  X \in \HilbFunctor{n}{p(t)}(A)\ \big\vert\  (I_X)_{\geqslant r} 
\text{ is generated by a $J$-marked basis} \right\}.
\end{equation*} 
Therefore, upon identifying ideals and the schemes they define, the isomorphism from the corollary is a canonical identification $\HFunctor{\mathcal{N}(J)_r} = \MFFunctor{J_{\geq r}}$.

We can then deduce from Corollary \ref{cor:isomorf} an isomorphism between the representing schemes.  Taking into account Theorem \ref{th:markedFunctorsInclusion}
we obtain the following result. 

\begin{corollary}\label{cor:mfEmbHilb}
Let $J$ be a saturated strongly stable ideal and $r$ be the Gotzmann number of its  Hilbert
polynomial $p(t)$. If $\rho$ is the maximum degree of monomials in $\B_J$  divisible by $x_1$, then
\begin{enumerate}[(i)]
\item\label{it:mfEmbHilb_i} for $s \geqslant \rho-1$,  $\MFScheme{J_{\geq s}}$ is an open subscheme of $\HilbScheme{n}{p(t)}$;
\item\label{it:mfEmbHilb_ii} for $s < \rho-1$,  $\MFScheme{J_{\geq s}}$ is a locally closed subscheme of $\HilbScheme{n}{p(t)}$.
\end{enumerate}
\end{corollary}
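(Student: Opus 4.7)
The plan is to reduce both parts of the corollary to Corollary \ref{cor:isomorf} by exploiting the chain of comparisons between functors $\MFFunctor{J_{\geq s}}$ for different truncation degrees given by Theorem \ref{th:markedFunctorsInclusion}, using $\rho$ as the threshold degree.

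For part (i), I would first observe that Theorem \ref{th:markedFunctorsInclusion}(i), applied at the truncation step going from $\MFFunctor{J_{\geq s}}$ to $\MFFunctor{J_{\geq s+1}}$, gives equality provided $J$ has no minimal generators of degree $s+2$ containing $x_1$. By the definition of $\rho$, this condition holds as soon as $s+2 > \rho$, i.e.\ for $s \geq \rho-1$. Iterating, the chain
\[
\MFFunctor{J_{\geq \rho-1}} \subseteq \MFFunctor{J_{\geq \rho}} \subseteq \MFFunctor{J_{\geq \rho+1}} \subseteq \cdots
\]
consists entirely of equalities, so $\MFFunctor{J_{\geq s}} = \MFFunctor{J_{\geq t}}$ for all $t \geq s \geq \rho-1$. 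Now, for a strongly stable ideal the Castelnuovo--Mumford regularity equals the maximal degree of a minimal generator, and the Gotzmann number $r$ dominates the regularity of any ideal with Hilbert polynomial $p(t)$; hence $r \geq \reg(J) \geq \rho > \rho-1$. Choosing $t = \max(s,r)$, and running the same iteration downward from $t$ to $r$ when $s > r$ (which is legitimate since every intermediate degree is $\geq \rho$), I obtain $\MFFunctor{J_{\geq s}} = \MFFunctor{J_{\geq r}}$. Applying Corollary \ref{cor:isomorf}, this functor is isomorphic to the subfunctor $\HFunctor{\cN(J)_r}$ of $\HilbFunctor{n}{p(t)}$, which is open by construction (it is the preimage, under $\underline{\mathscr{H}}$, of an open subfunctor of the Grassmann functor). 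By Yoneda's lemma, the natural transformation of representable functors induces a morphism of representing schemes, so $\MFScheme{J_{\geq s}}$ sits inside $\HilbScheme{n}{p(t)}$ as an open subscheme.

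For part (ii), the approach is the same except that the equality step at $s = \rho-1$ is replaced by a closed embedding. If $s < \rho-1$, iterating Theorem \ref{th:markedFunctorsInclusion}(ii) step by step from $s$ up to $\rho-1$ exhibits $\MFFunctor{J_{\geq s}}$ as a composition of closed subfunctors of $\MFFunctor{J_{\geq s+1}}, \MFFunctor{J_{\geq s+2}}, \dots, \MFFunctor{J_{\geq \rho-1}}$. Composition of closed immersions is a closed immersion, so $\MFScheme{J_{\geq s}}$ is a closed subscheme of $\MFScheme{J_{\geq \rho-1}}$. Combined with part (i), which identifies $\MFScheme{J_{\geq \rho-1}}$ with an open subscheme of $\HilbScheme{n}{p(t)}$, this realises $\MFScheme{J_{\geq s}}$ as a closed subscheme of an open subscheme, i.e.\ as a locally closed subset.

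The main obstacle is bookkeeping: making transparent the correspondence between the indexing convention in Theorem \ref{th:markedFunctorsInclusion} (where the condition on $x_1$-divisible generators appears at degree $s+1$ relative to the smaller truncation $J_{\geq s-1}$) and the threshold $\rho-1$ appearing in the statement, and verifying that the iteration past $r$ is covered by the same hypothesis. Both hinge on the numerical comparison $r \geq \rho$, which follows from the characterisation of the regularity of a Borel-fixed ideal; once this is established, the corollary is a purely formal consequence of Theorem \ref{th:markedFunctorsInclusion} and Corollary \ref{cor:isomorf}.
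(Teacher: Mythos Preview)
Your argument is correct and follows the same route as the paper: iterate Theorem~\ref{th:markedFunctorsInclusion} along the chain of truncations and invoke Corollary~\ref{cor:isomorf} at degree $r$ to identify $\MFScheme{J_{\geq r}}$ with the open subscheme $\HScheme{\cN(J)_r}$. You are in fact more explicit than the paper on two points it leaves tacit---the inequality $r\geq\rho$ (via $\reg(J)$) and the case $s>r$---so your bookkeeping is a mild improvement.
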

\begin{proof} 
\emph{(\ref{it:mfEmbHilb_i})} By Theorem \ref{th:markedFunctorsInclusion}, we have
\[
\MFScheme{J_{\geq \rho - 1}} \simeq \MFScheme{J_{\geq \rho}} \simeq \cdots \simeq \MFScheme{J_{\geq r}} = \HScheme{\cN(J)_r}.
\]

\emph{(\ref{it:mfEmbHilb_ii})} By Theorem \ref{th:markedFunctorsInclusion}, for $s < \rho-1$, we know that in the chain
\[
\MFScheme{J_{\geq s}} \subseteq \MFScheme{J_{\geq s+1}} \subseteq \cdots \subseteq \MFScheme{J_{\geq \rho - 1}} \simeq \cdots \simeq \MFScheme{J_{\geq r}} = \HScheme{\cN(J)_r},
\]
there is at least one proper closed embedding, so that $\MFScheme{J_{\geq s}}$ is a locally closed subscheme of the Hilbert scheme.
\end{proof}

\begin{remark} \label{rk:meglio}
Though our results only apply to  a small amount of the open subsets $ \HFunctor{\mathcal{N}} $ that are necessary to cover
$\HilbScheme{n}{p(t)}$,
in many interesting cases they allow to obtain  a different open cover  by exploiting  the  action of the linear group $\textnormal{PGL}(n+1)$.
 In particular, this is true for the Hilbert scheme $\HilbScheme{n,K}{p(t)}=\HilbScheme{n}{p(t)} \times_{\Spec \ZZ} \Spec K$ representing  the Hilbert functor $\HilbFunctor{n,K}{p(t)}\colon \underline{K\text{-Algebras}} \rightarrow 
\Sets$ for every field $K$ of characteristic zero. Indeed, the properties of the generic initial ideal 
proved by Galligo \cite{Galligo} allow to prove that every point of the Hilbert scheme  is contained in an open subset $\HScheme{\mathcal{N},K}$, where $\mathcal{N} := \cN(J)_r$
for a saturated strongly stable ideal $J$, at least up to the action of a general element in $\textnormal{PGL}(n+1)$.
 Such open cover of $\HilbScheme{n,K}{p(t)}$ is presented in \cite{BLR,BLMR,BBR}. 

The set of strongly stable ideals that are necessary to obtain such new open cover of the Hilbert scheme 
can be effectively determined using the algorithm presented in \cite{CLMR,LellaBorel,LellaWeb}. 
\end{remark}

\begin{remark}\label{rk:IK}
The equations of the open subscheme $\mathbf{H}_{\cN(J)_r}$ computed as the marked scheme over $J_{\geq r}$ are the same equations determined by Iarrobino and Kleiman in \cite{IarrobinoKleiman}. Indeed, the Eliahou-Kervaire syzygies among the generators of $J_{\geq r}$ are linear, so that imposing $S^{\EK}(f_\alpha,f_\beta) \xrightarrow{F_{J\geq r}^{(\cdot)}} 0$ is equivalent to prove that $\langle\textit{SF}_{J_{\geq r}}^{(r+1)}\rangle \subseteq \langle F_{J_{\geq r}}^{(r+1)}\rangle$. If we represent the generators $\{x_i f_\alpha\ \vert\ \forall\ x^\alpha \in \mathcal{B}_{J_{\geq r}},\ i = 0,\ldots,n\}$ of $(F_J)_{r+1}$ by a matrix $\mathcal{M}_J^{(r+1)}$, then the condition $\langle\textit{SF}_{J_{\geq r}}^{(r+1)}\rangle \subseteq \langle F_{J_{\geq r}}^{(r+1)}\rangle$ is equivalent to $\rank \mathcal{M}_J^{(r+1)} \leqslant \rank \langle F_{J_{\geq r}}^{(r+1)}\rangle = \rank J_{r+1} = \binom{n+r}{n}-p(r)$ and the latter condition is guaranteed by imposing the vanishing of the minors of order $\rank J_{r+1}+1$. This is how Iarrobino-Kleiman determine local equations of the Hilbert scheme. Notice that using this approach it is possible to deduce that the equations are of degree at most $\binom{n+r}{n}-p(r)+1$, while constructing the equations applying Theorem \ref{th:markedBasisCharacterization}\emph{(\ref{it:markedBasisCharacterization_iii})} and our reduction procedure, it is possible to deduce that the equations have degree at most $\deg p(t) + 2$ (see \cite[Theorem 3.3]{BLR}).
\end{remark}

\begin{remark}
The statements of  Corollary \ref{cor:mfEmbHilb}  can be very useful both from a computational and a theoretical  point of view. Indeed,  for a fixed saturated ideal $J$, 
the number of variables involved 
in  the computation of
equations defining the marked
scheme $\MFScheme{J_{\geq s}}$ dramatically  increases with $s$. On the other hand, in \cite{BBR} the equalities of Corollary \ref{cor:mfEmbHilb}\emph{(\ref{it:mfEmbHilb_i})}
show that the open subset of $\HilbScheme{n,K}{p(t)}$ of the 
$r'$-regular points, for a given   $r'<r$, can be embedded as a locally closed subscheme in the  Grassmannian $\GrassScheme{p(r')}{N(r')}$, smaller than that in which we can embed
the entire Hilbert scheme.

Moreover, in several cases  marked schemes $\MFScheme{J_{\geq s}} $ with $s<\rho -1$
correspond to interesting loci of the Hilbert scheme and our results allow effective computations also on them.
\end{remark}

\begin{example}
Consider the strongly stable ideal $J = (x_2^2,x_2x_1,x_1^4) \subseteq \ZZ[x_0,x_1,x_2]$. The Hilbert polynomial of $\Proj \ZZ[x_0,x_1,x_2]/J$ is $p(t) = 5$ with Gotzmann number equal to $5$. Therefore, the open subscheme $\mathbf{H}_{\cN(J)_5} \subseteq \HilbScheme{2}{5}$ can be defined as closed subscheme of the affine open subscheme $\mathbf{G}_{\cN(J)_5} \subseteq \GrassScheme{5}{21}$ of dimension $80$. Applying Corollary \ref{cor:mfEmbHilb}\emph{(\ref{it:mfEmbHilb_i})}, we can define the same open subscheme by means of the isomorphism $\MFScheme{J_{\geq 3}} \simeq \mathbf{H}_{\cN(J)_5}$ with $\MFScheme{J_{\geq 3}} \subseteq \AA^{30}$.

Finally, also the marked scheme associated to the saturated ideal may be very important. For instance, in the special case of zero-dimensional schemes in the projective plane $\PP^2$, for each postulation there is a unique strongly stable ideal $J$ realizing it (see for instance \cite[Chapters 1-3]{EisenbudSyzygies}). Therefore, $\MFScheme{J}$ parametrizes the locus of the Hilbert scheme $\HilbScheme{2}{d}$ with a fixed Hilbert function (up to the action of the projective linear group). In the example, the scheme $\MFScheme{J}$ parameterize the locus of 5 points in the plane with postulation $(1,3,4,5,\ldots)$.
\end{example}

\section{Gr\"obner strata}


Throughout this section, we denote by $\sigma$ a term ordering on the polynomial ring $A[\xx]$ and by $\IN{\sigma}(I)$ the initial ideal of an ideal $I \subseteq A[\xx]$ w.r.t.~such term ordering.  We define the \emph{Gr\"obner functor} $\GSFunctor{J}{\sigma}: \underline{\text{Noeth-Rings}} \rightarrow 
\Sets$ that associates to any ring $A$ the set
\begin{equation}\label{eq:gsSet}
\GSFunctor{J}{\sigma}(A) = \big\{ \text{ideals } I\subseteq A[\xx]\ \vert\ \IN{\sigma}(I) = J \big\}
\end{equation}
and to any ring homomorphism $\phi: A \rightarrow B$ the function
\[
\begin{split}
\GSFunctor{J}{\sigma}(\phi):\ \GSFunctor{J}{\sigma}(A)\ &\longrightarrow\ \GSFunctor{J}{\sigma}(B)\\
\parbox{1.5cm}{\centering $I$}\ & \longmapsto\ I \otimes_A B.
\end{split}
\]

Gr\"obner basis theory over rings is more intricate than Gr\"obner basis theory over fields (see also \cite{Lederer} for a more detailed discussion). A first delicate issue is the definition of initial ideals. Given an ideal $I \subseteq A[\xx]$, we can consider the 
ideal generated by the leading monomials  or the ideal generated by leading terms, i.e.~monomials with coefficients, of the polynomials in $I$. 
In general neither of the two  definitions is well-suited for functorial constructions, since taking the initial ideal of a given $I \subseteq A[\xx]$ does not commute with base change $\otimes_A B$ unless the initial ideal of $I$ is a monomial ideal. For instance,  
the initial ideal of $I=(2x_1+x_0) \subseteq \ZZ[x_0,x_1]$, $x_1>x_0$, is  $J'=(x_1)$ according to the first definition and 
$J''=(2x_1)$ according to the second one; after the extension $\ZZ\rightarrow \ZZ_2:=\ZZ/2\ZZ$ we obtain $\In_\sigma(I\otimes_\ZZ \ZZ_2)=(x_0)$, while $J'\otimes_\ZZ \ZZ_2= (x_1)$ and $J''\otimes_\ZZ \ZZ_2= (0)$.


\begin{definition}[\cite{Pauer,Wibmer}]
Let $I$ be an ideal in a polynomial ring $A[\xx]$, with $A$ a noetherian ring, and let $\sigma$ be a term ordering. The ideal $I$ is called \emph{monic} 
(w.r.t.~$\sigma$) if for all monomials $x^\alpha \in A[\xx]$ the set
\[
\textnormal{LC}(I,x^\alpha) = \left\{ a \in A\ \vert\ ax^\alpha \text{ is the leading term of } g \in I   \right\} \cup \{0\}
\]
is either $\{0\}$ or $A$.
\end{definition}

Therefore, the definition  of $\GSFunctor{J}{\sigma}$  given in \eqref{eq:gsSet} is correct and non-ambiguous if we assume that $J$ is a monomial ideal and 
restrict the set of ideals $I$ to those that are monic. To this aim, we follow the line of the definition of marked functor and  consider  the ideals $I$ 
that are generated by a  suitable set of polynomials, marked on $J$, that we expect to form a reduced Gr\"obner basis. Indeed, an ideal $I \subseteq A[\xx]$ admits 
a reduced Gr\"obner basis if, and only if, $I$ is a monic ideal (see \cite{Aschenbrenner,Pauer,Wibmer}). We recall that a  \emph{reduced} Gr\"obner basis is a Gr\"obner basis composed by polynomials with leading coefficient equal to $1_A$ and such that no term other than the leading one is contained in the initial ideal.
More precisely, the reduced Gr\"obner basis takes the shape
\[
G_J = \left\{ x^\alpha - \sum_{\mathclap{\cramped{\begin{subarray}{c} x^\beta \in \cN(J)_{\vert\alpha\vert} \\ x^\alpha >_\sigma x^\beta\end{subarray}}}} b_{\alpha\beta}\, x^\beta \in A[\xx]\ 
\Bigg\vert \ x^\alpha \in \B_J\right\}.
\]
This is a $J$-marked set, considering the marking given by the term ordering, i.e.~$\Ht(g) = \IN{\sigma}(g)$. Furthermore, $G_J$ is a $J$-marked basis, since 
for 
 $I=(G_J)\in \GSFunctor{J}{\sigma}(A)$, the  monomials in
 $\cN(\IN{\sigma}(I))=\cN(J)$ are even a basis of the $A$-module $A[\xx]/I$.
Then we can rewrite $\GSFunctor{J}{\sigma}(A)$ as
\begin{align}
\notag \GSFunctor{J}{\sigma}(A) &{} =  \big\{ \text{monic ideals } I\subseteq A[\xx]\ \vert\ \IN{\sigma}(I) = J \big\} = {}\\
\label{eq:gsBases} & {}= \left\{ I = \left(G_J\right)\ \vert\ G_J \text{ reduced Gr\"obner basis and } \IN{\sigma}\big((G_J)\big) = J\right\}.
\end{align}
Thus,  $\GSFunctor{J}{\sigma}(A) \subseteq \MFFunctor{J}(A)$ for every $A$, and there is an injection of functors $\GSFunctor{J}{\sigma} \rightarrow \MFFunctor{J}$.


\begin{lemma}
Let $J$ be any monomial ideal and $\sigma$ be a term ordering. Then $\GSFunctor{J}{\sigma}$ is a functor and a Zariski sheaf.
\end{lemma}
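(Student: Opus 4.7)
The plan is to reduce both claims to the analogous results already established for $\MFFunctor{J}$, exploiting the inclusion $\GSFunctor{J}{\sigma}(A) \subseteq \MFFunctor{J}(A)$ described in \eqref{eq:gsBases}. The technical heart is a single criterion that I would isolate first: an ideal $I \in \MFFunctor{J}(A)$ lies in $\GSFunctor{J}{\sigma}(A)$ if and only if its unique $J$-marked basis $F_J$ (Proposition \ref{prop:markedSetExists}) has the property that for every $f_\alpha = x^\alpha - \sum c_{\alpha\beta}x^\beta \in F_J$ each monomial $x^\beta$ in the tail satisfies $x^\beta <_\sigma x^\alpha$.

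To establish this criterion, the nontrivial direction is that the tail condition on $F_J$ forces $\IN_\sigma(I) = J$. The inclusion $J \subseteq \IN_\sigma(I)$ is immediate since $\IN_\sigma(f_\alpha) = x^\alpha$. For the reverse, I would take $f \in I$ nonzero with leading monomial $x^\beta$ and argue by contradiction assuming $x^\beta \in \cN(J)$. Apply the $F_J$-reduction of Definition \ref{riduzione} repeatedly to $f$; each step replaces some $x^\eta = x^\alpha \ast_J x^\delta \in J$ in the support by $x^\delta\,\T(f_\alpha)$, whose monomials are strictly smaller than $x^\eta$ with respect to $\sigma$ because the term ordering is multiplicative and $\T(f_\alpha)$ consists of $\sigma$-smaller terms. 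In particular $x^\beta$, being in $\cN(J)$, is never selected for reduction and no new monomials $\geq_\sigma x^\beta$ are introduced. By noetherianity of the reduction, the process terminates in a $J$-remainder $g \in I$ with $\Supp(g) \subseteq \cN(J)$ and $x^\beta \in \Supp(g)$; hence $g \in I \cap \langle \cN(J)\rangle = 0$ by Theorem \ref{th:markedSetChar}\emph{(\ref{it:markedSetChar_viii})}, a contradiction.

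With the criterion in hand, functoriality follows easily. Given $I \in \GSFunctor{J}{\sigma}(A)$ with reduced Gröbner basis $G_J$ and a morphism $\phi: A \to B$, Proposition \ref{prop:mfFunctor} shows that the image of $G_J$ in $B[\xx]$ is the $J$-marked basis of $I \otimes_A B$. Since the tail condition $x^\beta <_\sigma x^\alpha$ is a purely combinatorial property of the support, it is preserved termwise under $\phi$, so the criterion applies and $I \otimes_A B \in \GSFunctor{J}{\sigma}(B)$. Identity and composition axioms are inherited from $\MFFunctor{J}$.

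For the Zariski sheaf property, start from a noetherian $A$, an open cover $\Spec A = \bigcup \Spec A_{a_i}$, and a compatible family $I_i \in \GSFunctor{J}{\sigma}(A_{a_i})$. Because $\GSFunctor{J}{\sigma}(A_{a_i}) \subseteq \MFFunctor{J}(A_{a_i})$ and $\MFFunctor{J}$ is a Zariski sheaf by Lemma \ref{lem:mfZariskiSheaf}, there exists a unique $I \in \MFFunctor{J}(A)$ restricting to each $I_i$; concretely, the $J$-marked basis $F_J$ of $I$ is obtained by gluing the reduced Gröbner bases $G_{J,i}$ via the sheaf axiom for the quasi-coherent sheaf $\widetilde{A[\xx]}$. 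For each $f_\alpha \in F_J$, the condition that $x^\beta <_\sigma x^\alpha$ for every $x^\beta$ in the tail is verified because the images of $f_\alpha$ in the $A_{a_i}[\xx]$ satisfy it and the support of $f_\alpha$ is determined by these images. Invoking the criterion again gives $I \in \GSFunctor{J}{\sigma}(A)$, and uniqueness is inherited from $\MFFunctor{J}$. The main obstacle throughout is precisely the criterion of the second paragraph, i.e.\ showing that monic $J$-markedness together with the pointwise tail condition is already enough to force the Gröbner stratum condition $\IN_\sigma(I) = J$ without an auxiliary Buchberger-style $S$-polynomial check.
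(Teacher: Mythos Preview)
Your argument is correct when $J$ is strongly stable, but the lemma is stated for an \emph{arbitrary} monomial ideal $J$, and your route leans throughout on machinery that requires strong stability: Proposition~\ref{prop:markedSetExists} (uniqueness of the $J$-marked basis), the $F_J$-reduction of Definition~\ref{riduzione} (which uses the decomposition $x^\eta = x^\alpha \ast_J x^\delta$), Theorem~\ref{th:markedSetChar}\emph{(\ref{it:markedSetChar_viii})}, and Lemma~\ref{lem:mfZariskiSheaf} are all proved only under that hypothesis, and indeed the functor $\MFFunctor{J}$ itself is only defined in the paper for strongly stable $J$. So as written, your proof establishes only the strongly stable case.

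The paper's proof is the single sentence ``the arguments used in the proofs of Proposition~\ref{prop:mfFunctor} and Lemma~\ref{lem:mfZariskiSheaf} also apply'': rather than passing through $\MFFunctor{J}$, one repeats those arguments directly with the reduced Gr\"obner basis $G_J$ in place of the $J$-marked basis and with the term ordering $\sigma$ in place of the Borel order. This avoids the strongly stable hypothesis entirely, since the needed ingredients (monic head terms are preserved under $\phi$, the direct sum $A[\xx]=I\oplus\langle\cN(J)\rangle$ is preserved under extension of scalars and under gluing, and standard $\sigma$-reduction is noetherian for any monomial $J$) hold without it. Your criterion in the second paragraph is essentially the right idea, but if you replace the $F_J$-reduction by ordinary division w.r.t.\ $\sigma$ and drop the detour through $\MFFunctor{J}$, the argument goes through for all monomial $J$ and collapses to what the paper intends.
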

\begin{proof}
The arguments used in the proofs of Proposition \ref{prop:mfFunctor} and Lemma \ref{lem:mfZariskiSheaf} also apply to the case of the Gr\"obner functor.
\end{proof}

\begin{theorem}\label{th:gsClosedSubfunctor}
Let $J$ be a $m$-truncation strongly stable ideal and $\sigma$ be a term ordering.
Then the Gr\"obner functor $\GSFunctor{J}{\sigma}$ is a closed subfunctor of $\MFFunctor{J}$. 

Using Notation \ref{notazioni},  $\GSFunctor{J}{\sigma}$ is represented by the affine
scheme $\GSScheme{J}{\sigma}:=\Spec \ZZ[\CC]/\mathfrak{I}_J^{\sigma}$, where $\mathfrak{I}_J^{\sigma}$ is the sum of the ideal $\mathfrak{I}_J$ described 
in Theorem \ref{th:mfRepresentable} 
and the ideal $ \mathfrak{G}^\sigma_J := ( C_{\alpha\beta}\ \vert\ x^\beta >_\sigma x^\alpha)$.
\end{theorem}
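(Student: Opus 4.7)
The plan is to exploit the uniqueness of the $J$-marked basis associated to each $I \in \MFFunctor{J}(A)$ (Proposition~\ref{prop:markedSetExists}) in order to rephrase the Gr\"obner condition $\In_\sigma(I) = J$ as a purely linear vanishing of coefficients. Specifically, I would first prove the following equivalence, uniformly over every noetherian ring $A$: for $I \in \MFFunctor{J}(A)$ with its unique $J$-marked basis $F_J = \{f_\alpha = x^\alpha - \sum c_{\alpha\beta} x^\beta\}$, one has $I \in \GSFunctor{J}{\sigma}(A)$ if and only if $c_{\alpha\beta} = 0$ whenever $x^\beta >_\sigma x^\alpha$. Once this is in place, representability and the closed-subfunctor statement will follow immediately by transport through the universal basis $\mathcal{F}_J$.

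For the direction $(\Rightarrow)$, the description in \eqref{eq:gsBases} yields that $I$ admits a reduced Gr\"obner basis $G_J$ w.r.t.~$\sigma$; since $G_J$ is at the same time a $J$-marked basis under the marking $\Ht(\cdot) = \In_\sigma(\cdot)$, uniqueness in Proposition~\ref{prop:markedSetExists}\emph{(\ref{it:markedSetExists_i})} forces $G_J = F_J$, and reducedness w.r.t.~$\sigma$ says exactly that $c_{\alpha\beta}=0$ for $x^\beta >_\sigma x^\alpha$. The direction $(\Leftarrow)$ is the technical core. Under the vanishing hypothesis each $f_\alpha$ has $\In_\sigma(f_\alpha) = x^\alpha$, so $J \subseteq \In_\sigma(I)$. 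For the reverse inclusion I would use the $J$-marked set $\widetilde{F}_J^{(s)}$ from Theorem~\ref{th:markedSetChar}\emph{(\ref{it:markedSetChar_iii})}: by iterating the reduction described in its proof, combined with the unique decompositions of Proposition~\ref{prop:borelProducts}\emph{(\ref{it:borelProducts_i})}--\emph{(\ref{it:borelProducts_iii})}, I can show by $\Lex$-induction that $\In_\sigma(\widetilde{f}_\mu) = x^\mu$ for every $x^\mu \in J_s$, since each step replaces a monomial $x^\delta x^\beta \in J$ (with $x^\beta \in \cN(J)$ and $x^\beta <_\sigma x^\alpha$) by polynomials whose $\sigma$-leading term is still strictly below $x^\mu$. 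Then, since $F_J$ is a $J$-marked basis, Theorem~\ref{th:markedSetChar}\emph{(\ref{it:markedSetChar_iv})} writes every $g \in I_s$ as $\sum c_\mu \widetilde{f}_\mu$; the $\sigma$-largest $x^\mu$ with $c_\mu \neq 0$ survives as $\In_\sigma(g) \in J$, because the tails of the $\widetilde{f}_\mu$ are supported in $\cN(J)$ strictly $\sigma$-below their respective heads.

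With the equivalence proved, the representability part is essentially formal. The universal $J$-marked basis $\mathcal{F}_J$ over $\MFScheme{J} = \Spec \ZZ[\CC]/\mathfrak{I}_J$ (Notation~\ref{notazioni}) has the $C_{\alpha\beta}$ as coefficients, and pullback along any $\phi \colon \Spec A \to \MFScheme{J}$ specializes $\mathcal{F}_J$ to the $J$-marked basis of the corresponding $I \in \MFFunctor{J}(A)$ via $C_{\alpha\beta} \mapsto c_{\alpha\beta}$. Hence $\phi$ factors through $\GSScheme{J}{\sigma} = \Spec \ZZ[\CC]/(\mathfrak{I}_J + \mathfrak{G}^\sigma_J)$ exactly when the images of the ``wrong-order'' variables vanish, which by the equivalence is exactly the condition $I \in \GSFunctor{J}{\sigma}(A)$. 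This identifies $\GSScheme{J}{\sigma}$ as the scheme representing $\GSFunctor{J}{\sigma}$, and since $\mathfrak{G}^\sigma_J$ is generated by coordinate functions of $\Spec \ZZ[\CC]$, the inclusion $\GSScheme{J}{\sigma} \hookrightarrow \MFScheme{J}$ is visibly a closed immersion, so $\GSFunctor{J}{\sigma}$ is a closed subfunctor of $\MFFunctor{J}$.

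The main obstacle, as I see it, is the converse $(\Leftarrow)$ of the key equivalence: showing, over an arbitrary noetherian coefficient ring rather than a field, that the vanishing of the ``wrong-order'' coefficients suffices to promote $F_J$ to a genuine Gr\"obner basis of $I$ and force $\In_\sigma(I) = J$. Over a field this is a one-line Hilbert function comparison, but for general $A$ one must work in the monic-ideal framework of \cite{Pauer,Wibmer}; the crucial input is precisely that the $J$-marked basis decomposition $I_s = \langle\widetilde{F}_J^{(s)}\rangle$ of Theorem~\ref{th:markedSetChar} harmonizes with $\sigma$ under the hypothesis, because Proposition~\ref{prop:borelProducts} ensures that each intrinsic reduction step preserves the property $\In_\sigma(\widetilde{f}_\mu) = x^\mu$.
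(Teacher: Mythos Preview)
Your proposal is correct and follows the same line as the paper's own proof: both identify membership in $\GSFunctor{J}{\sigma}(A)$ with the vanishing of the ``wrong-order'' coefficients $c_{\alpha\beta}$ (those with $x^\beta >_\sigma x^\alpha$) in the unique $J$-marked basis of $I$, and then read off the closed-subfunctor and representability statements from this linear condition. The only differences are in packaging---the paper invokes the closed-condition criterion of \cite[Proposition~2.9]{HaimSturm} rather than arguing representability directly, and it asserts the implication ``vanishing $\Rightarrow$ reduced Gr\"obner basis with $\In_\sigma(I)=J$'' without justification, whereas you correctly supply this step by checking that the generators $\widetilde{f}_\mu\in\widetilde{F}_J^{(s)}$ satisfy $\In_\sigma(\widetilde{f}_\mu)=x^\mu$ via the $\Lex$-controlled reduction of Proposition~\ref{prop:borelProducts}.
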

\begin{proof}
Straightforward by applying the criterion given in Proposition 2.9 of \cite{HaimSturm} on the inclusion $\iota: \GSFunctor{J}{\sigma}(A) \hookrightarrow \MFFunctor{J}(A)$.
\end{proof}

 We will call \emph{Gr\"obner stratum} the scheme representing the Gr\"obner functor. 

\begin{example} 
Let us consider the ideal $J = (x_2^2,x_2x_1,x_1^3)$ of Example \ref{ex:mainMF} and the term ordering $\mathtt{DegLex}$. There is only one monomial in $\cN(J)$  greater
than some monomial of the same degree in $\B_J$: $x_2x_0^2 >_{\mathtt{DegLex}} x_1^3$. Therefore, the ideal defining 
$\GSScheme{J}{\mathtt{DegLex}}$ as a subscheme of $\AA^{12}=\Spec \ZZ[\CC]$ is the sum of the ideal defining $\MFScheme{J}$ and the principal ideal  $(C_{\text{\tiny 030,201}})$ and $\GSScheme{J}{\mathtt{DegLex}}$ is a hyperplane section of $\MFScheme{J}$.
\end{example}

An analogue of Theorem \ref{th:markedFunctorsInclusion} also holds for Gr\"obner strata (see \cite[Theorem 4.7]{LR}). In particular, we have an isomorphism 
$\GSScheme{J_{\geq s-1}}{\sigma} \simeq \GSScheme{J_{\geq s}}{\sigma}$ under the assumption of
Theorem \ref{th:markedFunctorsInclusion}\emph{(\ref{it:markedFunctorsInclusion_i})}, leading to the isomorphism $\MFScheme{J_{\geq s-1}} \simeq \MFScheme{J_{\geq s}}$.
From this property, we can deduce some cases in which marked families and Gr\"obner strata coincide.

We need the following property.
\begin{proposition}[{\cite[Lemma 3.2]{CLMR}}]
Let $J$ be a saturated strongly stable ideal. If the truncation $J_{\geq m}$ is a gen-segment ideal, then so is $J_{\geq m-1}$. In general,
the opposite implication is not true.
\end{proposition}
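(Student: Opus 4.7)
The plan is to compare the gen-segment conditions for the two truncations and verify, degree by degree, that every required inequality for $J_{\geq m-1}$ follows from one for $J_{\geq m}$. First I would record the structural fact that, since $J$ is saturated strongly stable, the set $\B_{J_{\geq s}}$ consists of all monomials of $J_s$ together with the minimal generators of $J$ in degrees $>s$. In particular, the minimal generators of $J_{\geq m-1}$ of degrees $\geq m$ form a subset of $\B_{J_{\geq m}}$, and $\cN(J_{\geq m-1})_s = \cN(J_{\geq m})_s = \cN(J)_s$ for every $s \geq m-1$.

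Granted this, I would split the verification of the gen-segment condition for $J_{\geq m-1}$ into cases according to the degree $\vert\alpha\vert$ of the relevant minimal generator $x^\alpha$. For $\vert\alpha\vert \geq m$, the required inequality $x^\beta <_\sigma x^\alpha$ with $x^\beta \in \cN(J)_{\vert\alpha\vert}$ is an immediate consequence of the gen-segment hypothesis on $J_{\geq m}$, thanks to the inclusions recorded above. The main (mild) obstacle is degree $m-1$: given $x^\alpha \in J_{m-1}$ and $x^\beta \in \cN(J)_{m-1}$, I would invoke the saturation of $J$ to conclude $x_0 x^\beta \in \cN(J)_m$ (since $(J:x_0)=J$), while obviously $x_0 x^\alpha \in J_m \subseteq \B_{J_{\geq m}}$. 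The gen-segment hypothesis on $J_{\geq m}$ then yields $x_0 x^\beta <_\sigma x_0 x^\alpha$, and cancelling the common factor $x_0$ by the multiplicative property of term orderings gives $x^\beta <_\sigma x^\alpha$, as required. The key insight is that saturation is exactly what makes multiplication by the smallest variable a faithful embedding $\cN(J)_{m-1} \hookrightarrow \cN(J)_m$, after which multiplicativity of $\sigma$ forces the conclusion.

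For the failure of the converse I would exhibit a concrete counterexample: take $J = (x_2) \subset \ZZ[x_0,x_1,x_2]$, which is saturated strongly stable, together with the term ordering $\sigma$ defined by the weight vector $(w_0,w_1,w_2)=(1,3,4)$ refined lexicographically. Then $\B_{J_{\geq 1}} = \{x_2\}$ and $\cN(J)_1 = \{x_0,x_1\}$, and since $4 > 3 > 1$ the truncation $J_{\geq 1}$ is gen-segment. On the other hand $\B_{J_{\geq 2}} = J_2 = \{x_2^2, x_2x_1, x_2x_0\}$ contains $x_2x_0$ of weight $5$, while $x_1^2 \in \cN(J)_2$ has weight $6$; hence $x_1^2 >_\sigma x_2x_0$, violating the gen-segment property of $J_{\geq 2}$. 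Conceptually, the converse fails because the truncation $J_{\geq m}$ acquires new minimal generators of the form $x_i \cdot x^\gamma$ with $x^\gamma \in J_{m-1}$, and these need not dominate every monomial in $\cN(J)_m$ even when the coarser condition for $J_{\geq m-1}$ holds.
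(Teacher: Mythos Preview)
The paper does not give its own proof of this proposition; it merely quotes it from \cite[Lemma 3.2]{CLMR}. So there is no argument in the paper to compare yours against. That said, your forward-direction argument is correct and is essentially the natural one: reducing the degree-$(m-1)$ inequalities to degree-$m$ ones by multiplying both monomials by $x_0$, using saturation to keep $x_0x^\beta$ in $\cN(J)$, and then cancelling $x_0$ via multiplicativity of the term order.

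There is, however, a potential weakness in your counterexample depending on how one reads ``gen-segment ideal''. If the phrase means ``gen-segment with respect to a fixed term ordering $\sigma$'' (the reading under which your forward proof is phrased), your example with $J=(x_2)$ and weights $(1,3,4)$ is fine. But if it means ``gen-segment with respect to \emph{some} term ordering'' --- which is the reading suggested by the paper's subsequent example, where $J_{\geq 4}$ is shown not to be gen-segment for \emph{any} $\sigma$ via the relation $(x_2x_1^2x_0)^2 = x_1^4 \cdot x_2^2x_0^2$ --- then your example fails: the truncation $(x_2)_{\geq 2} = (x_2^2, x_2x_1, x_2x_0)$ \emph{is} gen-segment with respect to $\mathtt{Lex}$, since $x_2x_0 >_{\mathtt{Lex}} x_1^2$. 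To make the counterexample robust under both readings, you could instead use the paper's own example $J = (x_2^3, x_2^2x_1, x_2x_1^2)$, where $J_{\geq 3}$ is gen-segment but $J_{\geq 4}$ cannot be for any term ordering, because $x_2x_1^2x_0$ is the geometric mean of two monomials in $\cN(J)_4$.
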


Thus, if we consider a strongly stable saturated ideal $J$ without minimal generators divisible by $x_1$ in degree $s+1$, then  $\MFScheme{J_{\geq s-1}}
\simeq \MFScheme{J_{\geq s}}$ and $\GSScheme{J_{\geq s-1}}{\sigma} \simeq \GSScheme{J_{\geq s}}{\sigma}$. If, moreover, we assume that there exists a term 
ordering $\sigma$ making $J_{\geq s-1}$ a gen-segment ideal, then by Theorem \ref{th:markedFunctorsInclusion}, we get
\[
\GSScheme{J_{\geq s}}{\sigma} \simeq \GSScheme{J_{\geq s-1}}{\sigma} = \MFScheme{J_{\geq s-1}} \simeq \MFScheme{J_{\geq s}}
\]
so that $\GSScheme{J_{\geq s}}{\sigma}$ and $\MFScheme{J_{\geq s}}$ coincide,  even if $J_{\geq s}$ were not a gen-segment ideal. Note that in this last case there 
exist pairs of monomials $x^\alpha \in J_s$  and $ x^\beta \in \cN(J)s$ such that $ x^\alpha <_{\sigma} x^\beta$. However, since $\mathfrak{I}_{J_{\geq s}}$ and 
$\mathfrak{I}_{J_{\geq s}}^{\sigma}$ coincide, the variables  $C_{\alpha\beta} $ corresponding to those pairs of monomials must be  already
contained in $\mathfrak{I}_{J_{\geq s}}$.

\begin{example}
Let us consider the ideal $J = (x_2^3,x_2^2x_1,x_2x_1^2) \subseteq	\ZZ[x_0,x_1,x_2]$. Its  Castelnuovo-Mumford regularity is $3$ and its Hilbert polynomial 
is   $p(t) = t+4$ with Gotzmann number  $4$.

For $s =1,2,3$, $J_{\geq s}$ is a gen-segment ideal w.r.t.~any term ordering $\sigma$ induced by a refinement of the grading $(4,3,1)$, 
whereas $J_{\geq 4}$ cannot be a gen-segment since $x_2x_1^2x_0 \in J_4$, $x_1^4,x_2^2x_0^2 \in \cN(J)_4$ and $(x_2x_1^2x_0)^2 = x_1^4 \cdot x_2^2x_0^2$.
Since there is no minimal generator in degree $5$, the equality $\MFScheme{J_{\geq 3}} = \GSScheme{J_{\geq 3}}{\sigma}$ induces the equality $\MFScheme{J_{\geq 4}} =
\GSScheme{J_{\geq 4}}{\sigma}$ as subschemes of $\HilbScheme{2}{t+4}$, even if our construction defines them in affine spaces of different dimensions. 
Indeed, in the construction of $\MFScheme{J_{\geq 4}}$ we consider the variable $C_{\text{\tiny 121,040}}$ corresponding to the monomial $x_1^4$ 
in the tail of the polynomial $f_{\text{\tiny 121}}$ with $\Ht(f_{\text{\tiny 121}})=x_2x_1^2x_0 $, while this variable does not appear 
in the construction of  $\GSScheme{J_{\geq 4}}{\sigma}$, 
since  $x_1^4 >_\sigma x_2x_1^2x_0$. This means that the variable $C_{\text{\tiny 121,040}}$ must be  already contained 
in the ideal defining $\MFScheme{J_{\geq 4}}$. We will now check this fact, by a direct computation of   $\mathfrak{I}_{J_{\geq 4}}$
as done in Corollary \ref{cor:mfRepresentable}.

Among the $\text{EK}$-polynomials involving $f_{\text{\tiny 121}}$ there is
$g:=S^{\EK}(f_{\text{\tiny 121}},f_{\text{\tiny 130}}) = x_1f_{\text{\tiny 121}} - x_0f_{\text{\tiny 130}}$.
The only monomials in $\Supp(g) \cap J $ are $ x_2^2x_1x_0^2$ and $x_2x_1^2x_0^2$, both divisible by $x_0$.
Then
\[
g\xrightarrow{\mathcal{F}_{J_{\geq 4}}^{(\cdot)}}_\ast h=g-(C_{\text{\tiny 121,202}}-C_{\text{\tiny 130,211}})\widetilde{f}_{212}-(C_{\text{\tiny 121,112}}-C_{\text{\tiny 130,121}})\widetilde{f}_{212}
\]
where
 $\widetilde{f}_{212} = x_0 f_{211}$ and $\widetilde{f}_{122} = x_0 f_{121}$. Therefore, we replace the monomials $x_2^2x_1x_0^2$ and $x_2x_1^2x_0^2$ with  
linear combinations of monomials all divisible by $x_0$, so that the monomial $x_1^5$ still appears in the support of $h$ with coefficient $C_{\text{\tiny 121,040}}$.
Therefore,   $C_{\text{\tiny 121,040}}$ is one of the  generators of the ideal $\mathfrak{I}_{J_{\geq 4}}$ defining $\MFScheme{J_{\geq 4}}$.
\end{example}

\section{\texorpdfstring{Example: marked schemes and Gr\"obner strata of $(x_3,x_2^2,x_2x_1^3,x_1^4)$}{Example: marked schemes and Gr\"obner strata of (x3,x2\^{}2,x2 x1\^{}3,x1\^{}4)}}\label{sec:finalExample}
In the final section, we report some results about marked schemes and Gr\"obner strata associated to the strongly stable ideal $J = (x_3,x_2^2,x_2x_1^3,x_1^4) \subseteq k[x_0,x_1,x_2,x_3]$ and its truncations. The ideal $J$ defines a point of the Hilbert scheme $\HilbScheme{3}{7}$, which is an irreducible scheme of dimension 21 \cite{Mazzola}. As the Gotzmann number is $7$, $\HilbScheme{3}{7}$ can be defined as subscheme of the Grassmannian $\GrassScheme{7}{120}$. The Iarrobino-Kleiman equations of the open subscheme $\mathbf{H}_{\cN(J)_7} \subseteq \mathbf{G}_{\cN(J)_7}$ can be computed considering the marked scheme $\MFScheme{J_{\geq 7}}$. By direct computation, one can check that $\MFScheme{J_{\geq 7}} \simeq \mathbf{H}_{\cN(J)_7}$ is defined by $2058$ quadratic equations in the coordinate ring of the affine space $\mathbb{A}^{791} \simeq \mathbf{G}_{\cN(J)_7}$. This embedding is clearly inconvenient because of the huge number of variables and the resulting large codimension of $\MFScheme{J_{\geq 7}}$. 

By Theorem \ref{th:markedFunctorsInclusion}, the marked scheme $\MFScheme{J_{\geq 7}}$ is isomorphic to the marked scheme $\MFScheme{J_{\geq 3}}$. The latter scheme is defined as subscheme of $\AA^{105}$, its ideal is generated by $210$ quadratic polynomials and it turns out to be isomorphic to a rational hypersurface $V$ in the affine space $\AA^{22}$ defined by a degree $6$ polynomial. The hardest part of the computation is working out the equations in order to find explicitly the embedding $\MFScheme{J_{\geq 3}} \hookrightarrow \AA^{22}$, since the process of elimination of 83 parameters highly increases the degree of the polynomials. This step can last a few hours (depending on the CPU) and requires large RAM memory. We recall that, in order to overcome this difficulty, an alternative polynomial reduction procedure (the so-called \emph{superminimal reduction}) has been developed in \cite{BCLR}. This procedure allows to embedded the marked scheme in an affine space of far lower dimension. For instance, $\MFScheme{J_{\geq 3}}$ can be embedded in $\AA^{28}$. Considering this embedding, we would need to eliminate only 6 parameters (instead of 83). 

The superminimal reduction procedure can be seen as a generalization of the procedure used for computing Gr\"obner strata of zero-dimensional ideals in the affine framework. However, we emphasize that the open subscheme $\mathbf{H}_{\cN(J)_7}$ cannot be studied as a Gr\"obner stratum. First, the truncation $J_{\geq 3}$ is not a gen-segment ideal. Indeed, the polynomials in the marked basis with head term $x_2^2x_0$ and $x_1^4$ have respectively $x_2x_1^2$ and $x_2x_1^2 x_0$ in the support of their tails, but $x_2x_1^2$ and $x_2x_1^2 x_0$ cannot appear at the same time in generators with initial terms $x_2^2x_0$ and $x_1^4$ of a reduced Gr\"obner basis, since we would have $x_2^2x_0 >_\sigma x_2x_1^2$, $x_1^4 >_\sigma x_2x_1^2 x_0$ and $x_2^2x_0 \cdot x_1^4 = x_2x_1^2 \cdot x_2x_1^2 x_0$. Second, the coefficients of $x_2x_1^2$ and $x_2x_1^2 x_0$ in the polynomials of the marked basis with head terms $x_2^2x_0$ and $x_1^4$ are not contained in the ideal defining $\MFScheme{J_{\geq 3}}$ so that the Gr\"obner stratum $\GSScheme{J_{\geq 3}}{\sigma}$ is a proper subscheme for every $\sigma$ (Theorem \ref{th:gsClosedSubfunctor}). The smallest codimension of a Gr\"obner stratum contained in $\MFScheme{J_{\geq 3}}$ is 1. In fact, the Gr\"obner strata corresponding to the term orderings obtained as refinement of the gradings $(13,6,4,1)$ and $(11,6,3,1)$ are both isomorphic to $\AA^{20}$. In the generic Gr\"obner basis of $\GSScheme{J_{\geq 3}}{(13,6,4,1)}$, the monomial $x_2x_1^2$ does not appear in the generator with initial term $x_2^2x_0$, while in the case of $\GSScheme{J_{\geq 3}}{(11,6,3,1)}$, $x_2x_1^2x_0$ does not appear in the generator with initial term $x_1^4$ (and there are no other differences with the marked basis).

Other proper subschemes of $\MFScheme{J_{\geq 3}}$ can be obtained considering marked schemes (and Gr\"obner strata) of truncation of $J$ in degree lower than 3 (the computation in these cases is much simpler and lasts few seconds). In Table \ref{tab:finalEx}, we show the comparison between marked schemes and Gr\"obner strata w.r.t.~the graded lexicographic and reverse lexicographic term orderings of several truncation of $J$. Notice that we already know by theoretical reasons that $\MFScheme{J_{\geq 2}} \simeq \MFScheme{J}$ and $\GSScheme{J_{\geq 2}}{\sigma} \simeq \GSScheme{J}{\sigma}$, for any $\sigma$. The case of the reverse lexicographic order is even more special, since the Gr\"obner strata w.r.t.~$\mathtt{RevLex}$ of all truncations are isomorphic \cite[Proposition 4.11]{LR}.

\begin{table}[!ht]
\begin{center}
\begin{tikzpicture}[scale=1]
\draw [thick] (3,1) -- (15,1);
\draw [thick] (1.5,0) -- (15,0);
\draw [thick] (1.5,-6) -- (15,-6);

\draw [] (1.5,-2) -- (15,-2);
\draw [] (1.5,-4) -- (15,-4);

\draw [thick] (1.5,0) -- (1.5,-6);
\draw [thick] (3,1) -- (3,-6);
\draw [] (7,1) -- (7,-6);
\draw [] (11,1) -- (11,-6);
\draw [thick] (15,1) -- (15,-6);

\node at (5,0.5) {Marked scheme};
\node at (9,0.7) {Gr\"obner stratum};
\node at (9,0.3) {w.r.t.~$\mathtt{RevLex}$};
\node at (13,0.7) {Gr\"obner stratum};
\node at (13,0.3) {w.r.t.~$\mathtt{DegLex}$};

\node at (2.25,-1) {$J$};
\node at (2.25,-3) {$J_{\geqslant 2}$};
\node at (2.25,-5) {$J_{\geqslant 3}$};

\node at (5,-0.5) {$\MFScheme{J} \subseteq \mathbb{A}^{22}$};
\node at (5,-1) {$28$ equations};
\node at (5,-1.5) {$\MFScheme{J} \simeq \mathbb{A}^{15}$};

\node at (5,-2.5) {$\MFScheme{J_{\geq 2}} \subseteq \mathbb{A}^{39}$};
\node at (5,-3) {$77$ equations};
\node at (5,-3.5) {$\MFScheme{J_{\geq 2}} \simeq \mathbb{A}^{15}$};

\node at (5,-4.3) {$\MFScheme{J_{\geq 3}} \subseteq \mathbb{A}^{105}$};
\node at (5,-4.75) {$210$ equations};
\node at (5,-5.2) {$\MFScheme{J_{\geq 3}} \simeq V \subseteq \mathbb{A}^{22}$};
\node at (5,-5.65) {$V \stackrel{\sim}{\dashrightarrow} \AA^{21}$, $\deg V = 6$};

\node at (9,-0.5) {$\GSScheme{J}{\mathtt{RevLex}} \subseteq \mathbb{A}^{22}$};
\node at (9,-1) {$28$ equations};
\node at (9,-1.5) {$\GSScheme{J}{\mathtt{RevLex}} \simeq \mathbb{A}^{15}$};

\node at (9,-2.5) {$\GSScheme{J_{\geq 2}}{\mathtt{RevLex}} \subseteq \mathbb{A}^{37}$};
\node at (9,-3) {$71$ equations};
\node at (9,-3.5) {$\GSScheme{J_{\geq 2}}{\mathtt{RevLex}} \simeq \mathbb{A}^{15}$};

\node at (9,-4.5) {$\GSScheme{J_{\geq 3}}{\mathtt{RevLex}} \subseteq \mathbb{A}^{93}$};
\node at (9,-5) {$204$ equations};
\node at (9,-5.5) {$\GSScheme{J_{\geq 3}}{\mathtt{RevLex}} \simeq \mathbb{A}^{15}$};

\node at (13,-0.5) {$\GSScheme{J}{\mathtt{DegLex}} \subseteq \mathbb{A}^{19}$};
\node at (13,-1) {$28$ equations};
\node at (13,-1.5) {$\GSScheme{J}{\mathtt{DegLex}} \simeq \mathbb{A}^{12}$};

\node at (13,-2.5) {$\GSScheme{J_{\geq 2}}{\mathtt{DegLex}} \subseteq \mathbb{A}^{36}$};
\node at (13,-3) {$77$ equations};
\node at (13,-3.5) {$\GSScheme{J_{\geq 2}}{\mathtt{DegLex}} \simeq \mathbb{A}^{12}$};

\node at (13,-4.5) {$\GSScheme{J_{\geq 3}}{\mathtt{DegLex}} \subseteq \mathbb{A}^{102}$};
\node at (13,-5) {$210$ equations};
\node at (13,-5.5) {$\GSScheme{J_{\geq 3}}{\mathtt{DegLex}} \simeq \mathbb{A}^{18}$};
\end{tikzpicture}
\end{center}
\caption{\label{tab:finalEx} Marked schemes and Gr\"obner strata w.r.t.~the graded lexicographic and reverse lexicographic term orderings of $J$, $J_{\geq 2}$ and $J_{\geq 3}$, where $J = (x_3,x_2^2,x_2x_1^3,x_1^4) \subseteq k[x_0,x_1,x_2,x_3]$.}
\end{table}

Moreover, by direct computation, we observe that the saturated ideal $J$ is a gen-segment w.r.t.~the reverse lexicographic order, so that $\MFScheme{J} = \GSScheme{J}{\mathtt{RevLex}}$. The truncation $J_{\geq 2}$ is a gen-segment w.r.t.~any term ordering induced by a refinement of the grading $(7,4,3,1)$, so that the marked scheme $\MFScheme{J_{\geq2}}$ coincides with the Gr\"obner stratum $\GSScheme{J_{\geq 2}}{(7,4,3,1)}$. Notice that the term ordering induced by $(7,4,3,1)$ allows two monomials more than the reverse lexicographic order in the tails of the marked set, in fact $\GSScheme{J_{\geq 2}}{(7,4,3,1)} \subseteq \mathbb{A}^{39}$ and $\GSScheme{J_{\geq 2}}{\mathtt{RevLex}} \subseteq \mathbb{A}^{37}$. However, the explicit computation shows that these two monomials cannot appear in the tails of a marked basis, since $\GSScheme{J_{\geq 2}}{(7,4,3,1)} = \MFScheme{J_{\geq2}} \simeq \MFScheme{J} = \GSScheme{J}{\mathtt{RevLex}} \simeq \GSScheme{J_{\geq 2}}{\mathtt{RevLex}}$.

\bigskip

\paragraph{\bf Acknowledgment} The authors would like to thank Mathias Lederer for his help in strongly improving the first version of this paper.

\providecommand{\bysame}{\leavevmode\hbox to3em{\hrulefill}\thinspace}
\providecommand{\href}[2]{#2}

\end{document}